\def\@tocline#1#2#3#4#5#6#7{\relax
  \ifnum #1>\c@tocdepth 
  \else
    \par \addpenalty\@secpenalty\addvspace{#2}%
    \begingroup \hyphenpenalty\@M
    \@ifempty{#4}{%
      \@tempdima\csname r@tocindent\number#1\endcsname\relax
    }{%
      \@tempdima#4\relax
    }%
    \parindent\z@ \leftskip#3\relax \advance\leftskip\@tempdima\relax
    \rightskip\@pnumwidth plus4em \parfillskip-\@pnumwidth
    #5\leavevmode\hskip-\@tempdima
      \ifcase #1
       \or\or \hskip 1em \or \hskip 2em \else \hskip 3em \fi%
      #6\nobreak\relax
      \dotfill
      \hbox to\@pnumwidth{\@tocpagenum{#7}}
    \par
    \nobreak
    \endgroup
  \fi}
\newtheorem{theorem}{Theorem}[section]
\newtheorem{lemma}[theorem]{Lemma}
\newtheorem{proposition}[theorem]{Proposition}
\theoremstyle{definition}
\newtheorem{definition}[theorem]{Definition}
\theoremstyle{remark}
\newtheorem{remark}[theorem]{Remark}
\newcommand{\N}{{\mathbb N}}
\newcommand{\R}{{\mathbb R}}
\newcommand{\loc}{\mathrm{loc}}
\newcommand{\beqn}{\begin{eqnarray}}
\newcommand{\eeqn}{\end{eqnarray}}   
\newcommand{\beq}{\begin{eqnarray*}}
\newcommand{\eeq}{\end{eqnarray*}}
\newcommand{\be}{\begin{equation}}
\newcommand{\bel}[1]{\begin{equation}\label{#1}}
\newcommand{\ee}{\end{equation}}
\newcommand{\BA}{\begin{array}}
\newcommand{\EA}{\end{array}}
\newcommand{\BAN}{\renewcommand{\arraystretch}{1.2}
\setlength{\arraycolsep}{2pt}\begin{array}}
\newcommand{\BAV}[2]{\renewcommand{\arraystretch}{#1}
\setlength{\arraycolsep}{#2}\begin{array}}
\newcommand{\BSA}{\begin{subarray}}
\newcommand{\ESA}{\end{subarray}}
\newcommand{\BAL}{\begin{aligned}}
\newcommand{\EAL}{\end{aligned}}
\newcommand{\norm}[1]{\left \|#1\right \|}
\newcommand{\supp}{\mathrm{supp}\,}
\newcommand{\dist}{\mathrm{dist}\,}
\newcommand{\sign}{\mathrm{sign}}
\def\dist{\mathrm{dist}}
\def\gm{\mu}
\def\CA{{\mathcal A}}   \def\CB{{\mathcal B}}
   \def\CK{{\mathcal K}}   \def\CL{{\mathcal L}}
\def\CT{{\mathcal T}}
   \def\BBN {\mathbb N}    
   \def\BBR {\mathbb R}
\def\GTM {\mathfrak M}
\newcommand{\xa}{\alpha}
\newcommand{\xb}{\beta}
\newcommand{\xg}{\gamma}
\newcommand{\xd}{\delta}
\newcommand{\xD}{\Delta}
\newcommand{\xe}{\varepsilon}
\newcommand{\xz}{\zeta}
\newcommand{\xl}{\lambda}
\newcommand{\xm}{\mu}
\newcommand{\xn}{\nu}
\newcommand{\xs}{\sigma}
\newcommand{\xf}{\phi}
\newcommand{\xF}{\Phi}
\newcommand{\xO}{\Omega}
\newcommand{\tp}{{\tau_+(s,\xm)}}
\newcommand{\Hsg}{H_0^s(\Omega;|x|^\gamma)}
\newcommand{\Hm}{\mathbf{H}_{\mu,0}^s(\Omega)}
\newcommand{\scJ}{\mathscr{J}}
\newcommand{\1}{\textbf{1}}
\def\bal#1\eal{\small\begin{align*}#1\end{align*}\normalsize}
\def\ba#1\ea{\small\begin{align}#1\end{align}\normalsize}
\numberwithin{equation}{section}
\def\Xint#1{\mathchoice
{\XXint\displaystyle\textstyle{#1}}%
{\XXint\textstyle\scriptstyle{#1}}%
{\XXint\scriptstyle\scriptscriptstyle{#1}}%
{\XXint\scriptscriptstyle\scriptscriptstyle{#1}}%
\!\int}
\def\XXint#1#2#3{{\setbox0=\hbox{$#1{#2#3}{\int}$}
\vcenter{\hbox{$#2#3$}}\kern-.5\wd0}}
\def\dashint{\Xint-}
\begin{document}

\title[Semilinear fractional Hardy equations ]{Semilinear elliptic equations involving fractional Hardy operators}
\author[H. Chen]{Huyuan Chen}
\address{Huyuan Chen, Department of Mathematics, Jiangxi Normal University, Nanchang 330022, China}
\email{chenhuyuan@yeah.net}

\author[K. T. Gkikas]{Konstantinos T. Gkikas}
\address{Konstantinos T. Gkikas, Department of Mathematics, National and Kapodistrian University of Athens, 15784 Athens, Greece}
\email{kugkikas@math.uoa.gr}

\author[P.T. Nguyen]{Phuoc-Tai Nguyen}
\address{Phuoc-Tai Nguyen, Department of Mathematics and Statistics, Masaryk University, Brno, Czech Republic}
\email{ptnguyen@math.muni.cz}

\date{\today}

\begin{abstract}
 Our aim in this article is to study semilinear elliptic equations involving a fractional Hardy operator, an absorption and a Radon source in a weighted distributional sense. We show various scenarios, produced by the combined effect of the fractional Hardy potential, the growth of the absorption term and the concentration of the measure, in which existence and uniqueness results holds.

\bigskip

\noindent{\footnotesize Key words:  \textit{Semilinear elliptic problem; Fractional Hardy operators; Fractional Hardy potential; Radon measures. }
	
	\smallskip
	
\noindent Mathematics Subject Classification: \textit{ 35R11; 35J70; 35B40. }}
	
\end{abstract}

\maketitle
\tableofcontents

\section{Introduction}
 For $s \in(0,1)$, $\mu \in \R$ and $x\in\R^N$ with $2 \leq N \in \N$, let $\CL_\mu^s$ be the fractional Hardy operator defined by
\bal 
\CL_\mu^s  := (-\Delta)^s    +\frac{\mu}{|x|^{2s}}.
\eal
Here $ (-\Delta)^s $ denotes the fractional Laplacian  defined by
\bal
(-\Delta)^s  u(x):= C_{N,s}\lim_{\epsilon\to 0^+} \int_{\R^N\setminus B_\epsilon(x) }\frac{ u(x)-
	u(y)}{|x-y|^{N+2s}}  dy,
\eal
where $B_\epsilon(x)$ is the ball  centered $x \in \R^N$ with radius $\epsilon>0$,   
\bal 
C_{N,s}=2^{2s}\pi^{-\frac N2}s\frac{\Gamma(\frac{N+2s}2)}{\Gamma(1-s)}>0
\eal 
with $\Gamma$ being the Gamma function. In this article, we consider the semilinear problem
\ba \label{eq:g(u)-kdirac-sum} \left\{\BAL
\CL_\xm^s u+g(u)&= \tilde \nu  &&\quad\text{in}\;\xO,\\
u&=0&&\quad\text{in}\;\BBR^N\setminus\xO,
\EAL \right.
\ea
where $g: \R \to \R$ is a nondecreasing continuous function such that $g(0)=0$ and $\tilde \nu$
is a bounded measure on $\Omega$.

\subsection{Review of the literature} When $s=1$, the operator $\CL_\mu^s$ becomes the local Hardy operator $\CL_\xm^1:=-\Delta +\frac{\mu}{|x|^2}$ and problem \eqref{eq:g(u)-kdirac-sum} reduces to the following semilinear elliptic equation
 \ba \label{eq:g(u)-kdirac-sum-loc}
\left\{\BAL
\CL_\xm^1  u+g(u)&= \tilde \nu   &&\quad\text{in}\;\,\xO,\\
u&=0&&\quad\text{on}\;\, \partial\xO.
\EAL \right.
 \ea
In the free-potential case, namely $\mu=0$, fundamental contributions were due to Brezis
\cite{B12} and B\'enilan and Brezis \cite{BB11}. When $N\geq 3$, it was proved that if $g:\R\to\R$   satisfies the {\it integral  subcritical assumption} 
\bal
\int_1^{+\infty}(g(s)-g(-s))s^{-1-\frac{N}{N-2}}ds<+\infty
\eal
then problem \eqref{eq:g(u)-kdirac-sum-loc} admits a unique weak solution. When $N=2$,
 Vazquez \cite{Va1} imposed a condition expressed in terms of  exponential growth of $g$ under which there exists a unique weak solution of \eqref{eq:g(u)-kdirac-sum-loc} with $\mu=0$.
Later on, Baras and Pierre \cite{BP2} studied \eqref{eq:g(u)-kdirac-sum-loc})
with $g(u)=|u|^{p-1}u$ for $p>1$ and they discovered that for $p\geq\frac{N}{N-2}$,
the problem is solvable if and only if $\tilde \nu$ is absolutely continuous
with respect to the Bessel capacity $c_{2,p'}$ with $p'=\frac{p}{p-1}$.  Since then, significant developments on problem \eqref{eq:g(u)-kdirac-sum-loc} in different directions have been established; see e.g.  \cite{BMP, MV-book,Ve-HandBook}. In case $\mu\not=0$, problem \eqref{eq:g(u)-kdirac-sum-loc} was studied in \cite{GuVe,D,BP} in connection with the Hardy inequality \cite{BV1,VazZua-00}. Thanks to a new notion of weak solutions of $\CL_\gm^1 u=0$ combined with a  dual formulation of the equation introduced in \cite{CQZ}, the authors of the paper
 \cite{ChVe} investigated problem \eqref{eq:g(u)-kdirac-sum-loc}and proved a existence and uniqueness result provided  that $g$ satisfies a subcritical integral assumption and the weak $\Delta_2$-condition. Moreover, when $g(u)=|u|^{p-1}u$ with $p>1$, they gave necessary and sufficient conditions for the existence of a weak solution to \eqref{eq:g(u)-kdirac-sum-loc}. For semilinear elliptic equations with more general potentials, we refer to \cite{GkiNg-absorption}.

When $s\in(0,1)$ and $\mu=0$, problem \eqref{eq:g(u)-kdirac-sum} turns to the fractional semilinear elliptic equation
\bal
 \left\{\BAL
(-\Delta)^s u+g(u)&= \tilde \nu  &&\quad\text{in}\;\xO,\\
u&=0&&\quad\text{in}\;\BBR^N\setminus\xO,
\EAL \right.
\eal
which has been studied  in \cite{CV} with Radon measure $\tilde \nu$.  The readers are referred to \cite{KMS} for the case $g=0$ and to \cite{NV} for boundary measures.

Now we return to problem \eqref{eq:g(u)-kdirac-sum}, which is driven by the fractional Hardy operator $\CL_\mu^s$. This operator arises in physical models related to
relativistic Schr\"odinger operator with Coulomb potential (see \cite{NRS,FLS1}), in the study of Hardy inequalities and Hardy-Lieb-Thirring inequalities (see, e.g., \cite{FLS,Fra-2009,tz}) and is closely related to the fractional Hardy inequality
\ba \label{mu_00}
\frac{C_{N,s}}{2}\int_{\R^N}\int_{\R^N} \frac{|\varphi(x)-\varphi(y)|^2}{|x-y|^{N+2s}}dydx + \mu_0 \int_{\R^N} \frac{|\varphi(x)|^2}{|x|^{2s}}dx \geq 0, \quad \forall \varphi \in C_0^\infty(\R^N),
\ea
where the best constant in \eqref{mu_00} is explicitly determined by  (see, e.g., \cite{FLS}) 
\bal
\mu_0 =-2^{2s}\frac{\Gamma^2(\frac{N+2s}4)}{\Gamma^2(\frac{N-2s}{4})}.
\eal 
Related inequalities could be found in  \cite{BF,FLS,Fra-2009}. Note that when $\mu\geq \mu_0$, the fractional Hardy operator $\CL^s_\mu$ is positive definite. The potential $\mu |x|^{-2s}$ is of the same homogeneity $-2s$ as $(-\Delta)^s$, hence it cannot be understood as a lower order perturbation of $(-\Delta)^s$. Moreover, in non-relativistic quantum mechanics, this potential exhibits the borderline between regular potentials (for which ordinary stationary states exist) and singular potentials (for which the energy is not bounded from below), therefore it may lead to disclose anomalous phenomena (see \cite{FLS-71}). Further properties of $\CL_\mu^s$ were obtained in \cite{B,JW,MY}.

Recent years have witnessed a growing interest in the study of elliptic equations with fractional Hardy potentials, evidenced by \cite{F,M,NRS,MW,WYZ,CW,CGN}.   
Let us recall relevant results on the linear equation  with a fractional Hardy potential. It was shown in \cite{CW} that  for $\mu\geq \mu_0$ the equation
\bal \CL_\mu^s u=0\quad\, {\rm in}\ \, \R^N\setminus \{0\}
\eal
has two distinct radial solutions
\ba\label{fu} \Phi_{s,\mu}(x)=\left\{
\BAL
&|x|^{\tau_-(s,\mu)}\quad
	&&\text{if }  \mu>\mu_0\\
&|x|^{-\frac{N-2s}{2}}|\ln|x|| \quad  &&\text{if } \mu=\mu_0
\EAL
\right.\quad   \text{and}\ \ \Gamma_{s,\mu}(x)=|x|^{\tau_+(s,\mu)} \ \ \text{for } x \in \R^N \setminus \{0\},
\ea
where
$\tau_-(s,\mu)  \leq  \tau_+(s,\mu)$. Additional properties of $\tau_-(s,\mu)$ and  $\tau_+(s,\mu)$ were given in \cite[Proposition 1.2]{CW}. 
\textit{In the remaining of the paper, when there is no ambiguity, we write for short $\tau_+$ and $\tau_-$ instead of $\tau_+(s,\mu)$ and $\tau_-(s,\mu)$.}

Note that for any $\xi\in C^2_c(\R^N)$,
 \bal
\int_{\R^N}\Phi_{s,\mu}   (-\Delta)^s_{\tau_+}\xi \, dx  =c_{s,\mu}\xi(0),
\eal
where $c_{s,\mu}>0$ and $(-\Delta)^s_{\tau_+}$ denotes the dual of the operator of $\CL^s_\mu$, which is  a weighted fractional Laplacian  given by
\bal
(-\Delta)^s_{\tau_+} v(x):=
C_{N,s}\lim_{\epsilon\to0^+} \int_{\R^N\setminus B_\epsilon(x) }\frac{v(x)-
v(y)}{|x-y|^{N+2s}} \, |y|^{\tau_+}dy.
\eal

By \cite[Theorem 4.14]{CW}, there exist a positive constant $c=c(N,s,\xm,\Omega)$ and a nonnegative function $\xF_{s,\xm}^\xO\in W^{s,2}_{\loc}(\BBR^N\setminus\{0\})$ such that $\xF_{s,\xm}^\xO=0$  in $\BBR^N\setminus\xO,$
\ba \label{PhiOmega}
\lim_{|x|\to 0^+}\frac{\xF_{s,\xm}^\xO(x)}{\xF_{s,\xm}(x)}=1\quad \text{and}\quad\xF_{s,\xm}^\xO(x)\leq c |x|^{\tau_-}, \quad \forall x\in \xO\setminus\{0\}.
\ea
Moreover,  for any $v\in C_0^\infty(\xO\setminus\{0\})$
 \bal
\ll \Phi_{s,\mu}^\Omega,v \gg_{\mu} :=\frac{C_{N,s}}{2}\int_{\R^N}\int_{\R^N}\frac{\big(u(x)-u(y)\big)\big(v(x)-v(y)\big)}{|x-y|^{N+2s}} dy   dx+\mu  \int_\Omega \frac{u(x) v(x)}{|x|^{2s}}dx= 0
\eal  
and for any $\psi\in C^{1,1}_0(\xO)$
\bal
\int_\xO \xF_{s,\xm}^\xO(-\xD)^s_{\tau_+}\psi\, dx=c_{s,\xm}\,\psi(0), 
\eal
where $c_{s,\xm}$ is a constant given in \cite[(1.15)]{CW}. 

In a recent paper \cite{CGN}, the authors of the present paper set up an appropriate distributional framework to investigate  of the form 
\ba\label{eq 1.0}\left\{\BAL
\CL_\xm^s u&= \tilde \nu  &&\quad\text{in}\;\xO,\\
u&=0&&\quad\text{in}\;\BBR^N\setminus\xO,
\EAL\right.
\ea
where $\tilde \nu$ is a bounded measure on $\Omega$.
The approach in \cite{CGN} is to analyze the associated weighted fractional Laplace operator $(-\Delta)^s_{\gamma}$ which is defined by
\ba \label{L}
	(-\Delta)^s_{\gamma} v(x)=C_{N,s}\lim_{\xd\to 0^+} \int_{\R^N\setminus B_\xd(x) }\frac{v(x)-
		v(y)}{|x-y|^{N+2s}} \, |y|^\gamma dy,
\ea
where  $\gamma\in\big[ -\frac{N-2s}{2}, 2s\big)$.  Particularly,  $(-\Delta)^s_0$ reduces to the fractional Laplacian.
From the integral-differential form of the weighted fractional Laplacian, a natural restriction for the  function $v$ is
\bal \|v\|_{L_{2s-\gamma}(\R^N)}:=\int_{\R^N}\frac{|v(x)|}{(1+|x|)^{N+2s-\gamma}} dx<+\infty.
\eal
The weighted Sobolev space associated to $(-\Delta)_{\gamma}^s$ is  $H_0^s(\xO;|x|^\gamma)$, which is defined as the closure of the functions in $C^\infty(\R^N)$ with the compact support in $\Omega$  under the norm
\ba 
\norm{u}_{H_0^s(\Omega ;|x|^\gamma)}:=\sqrt{\int_{\R^N}\int_{\R^N}\frac{|u(x)-u(y)|^2}{|x-y|^{N+2s}}|y|^\xg dy |x|^\xg dx}.
\ea
Note that $H_0^s(\xO;|x|^\gamma)$ is a Hilbert space with the inner product
\ba
\langle u,v\rangle_{s,\gamma}:=\int_{\R^N}\int_{\R^N}\frac{\big(u(x)-u(y)\big)\big(v(x)-v(y)\big)}{|x-y|^{N+2s}}|y|^\xg dy |x|^\xg dx.
\ea
{\it The results in \cite{CGN}, which will be recalled in Section \S 2, provide a  basic framework for the study of problem \eqref{eq:g(u)-kdirac-sum} with Radon measures.}  

Semiliear equations with a fractional Hardy potential have been a research objective in numerous papers; see e.g. \cite{F,M,NRS,MW,WYZ}. However, the above-mentioned works focus only on the case of source nonlinearities and rely on variational methods. To the best of our knowledge, to date, a profound understanding of the absorption case, namely equation \eqref{eq:g(u)-kdirac-sum}, is still lacking. The interplay between the fractional Hardy operator, the absorption nonlinearity and the measure datum generates different types of substantial difficulties and requires a different approach. In this paper, we perform a deep analysis of this interplay and develop a theory for \eqref{eq:g(u)-kdirac-sum} in measure frameworks. The main ingredients includes recent results in  \cite{CGN} (see subsection \ref{basic}), the local behavior near the origin of the solutions to the Poisson equation (see Proposition \ref{est1})  obtained by adapting ideas in \cite{DKP} and in \cite{KMS}, two-sided estimates of approximating solutions expressed in term of the fundamental solution $\Phi_{s,\mu}^{\Omega}$ (see Lemma \ref{aproxsol-sum}).    

\subsection{Framework, notion of solutions and main results}
Let $\Omega$ be a bounded domain in $\R^N$ containing the origin and $d(x)=\dist(x,\partial \Omega)$. For $q \in [1,+\infty)$, $\alpha, \beta \in \mathbb{R}$, we denote by $L^q(\Omega;d(x)^{\alpha}|x|^{\beta})$ the weighted Lebesgue space of functions $v: \Omega \to \mathbb{R}$ such that
\bal 
\| v \|_{L^q(\Omega;d(x)^{\alpha}|x|^{\beta})}:= \left( \int_{\Omega} |v|^q d(x)^{\alpha}|x|^{\beta} dx \right)^{\frac{1}{q}} < +\infty.
\eal
We denote by  {\it $\GTM(\Omega;d(x)^\alpha|x|^{\beta})$ (resp. $\GTM(\Omega\setminus\{0\};d(x)^\alpha|x|^{\beta})$) the space of Radon measures $\nu$ on $\Omega$ (resp. $\Omega\setminus\{0\}$) such that
\bal
\| \nu\|_{\GTM(\Omega;d(x)^\alpha|x|^{\beta})} &:=\int_{\Omega}d(x)^\alpha|x|^{\beta} \, d|\nu|<+\infty, \\
\big(\text{resp. } \| \nu\|_{\GTM(\Omega\setminus\{0\};d(x)^\alpha|x|^{\beta})} &:=\int_{\Omega\setminus\{0\}}d(x)^\alpha|x|^{\beta} \, d|\nu|<+\infty\big)
\eal
and by $\GTM^+(\Omega;d(x)^\alpha|x|^{\beta})$ (resp. $\GTM^+(\Omega\setminus\{0\};d(x)^\alpha|x|^{\beta})$) its positive cone. }

In order to specify the notion of weak solutions, we first introduce the space of test functions.

\begin{definition} \label{def:weaksol} {\it
	Assume $\xO\subset\BBR^N$ is a bounded domain satisfying the exterior ball condition and containing the origin.  For $b<2s-\tau_+$, we denote by $\mathbf{X}_\xm(\xO;|x|^{-b})$ the space of functions $\psi$ with the following properties:
	
	(i) $\psi\in H^{s}_0(\xO;|x|^{\tau_+});$
	
	(ii) $(-\xD)^s_{\tau_+}\psi $ exists a.e. in $\xO\setminus\{0\}$ and $\displaystyle\sup_{x\in \xO\setminus\{0\}}\big||x|^b (-\xD)^s_{\tau_+}\psi(x)\big| <+\infty$;
	
	(iii) for any compact set $K\subset\xO\setminus\{0\}$, there exist $\xd_0>0$ and $w\in L^1_{\loc}(\xO\setminus \{0\})$ such that
	\bal
	\sup_{0<\xd\leq\xd_0}|(-\Delta)^s_{\tau_+,\xd}\psi|\leq w\;\; \text{a.e. in}\;K
	\eal
	where
	\bal
	(-\Delta)^s_{\tau_+,\delta} \psi(x):=
	C_{N,s} \int_{\R^N\setminus B_{\delta}(x) }\frac{\psi(x)-
		\psi(y)}{|x-y|^{N+2s}} \, |y|^{\tau_+}dy, \quad x \in \Omega \setminus \{0\}.
	\eal }
\end{definition}

The concentration of the measure datum $\tilde \nu$ plays an important role in the study of problem (\ref{eq:g(u)-kdirac-sum}). For any $\tilde \nu \in \GTM(\Omega;d(x)^s|x|^{\alpha})$, we decompose $\tilde \nu = \nu + \ell \delta_0$ where $\nu \in \GTM(\Omega \setminus \{0\};d(x)^s|x|^{\alpha})$, $\ell \in \R$ and $\delta_0$ denotes the Dirac measure at the origin.

\begin{definition} \label{sol:semieq-Dirac-sum} {\it
	Assume  $g: \R \to \R$ is a nondecreasing continuous function such that $g(0)=0$, $\tilde \nu = \nu + \ell \delta_0$ where $\nu \in \GTM(\Omega \setminus \{0\}; |x|^{\tau_+})$ and $\ell \in \R$. A function $u$ is called a \textit{weak  solution} of problem \eqref{eq:g(u)-kdirac-sum} if for any $b<2s-\tau_+$, $u\in L^1(\xO;|x|^{-b}),$ $g(u)\in L^1(\xO;|x|^{\tau_+})$ and
	\ba   \label{sol:g(u)-kdiracdef}
	\int_\xO u(-\xD)^s_{\tau_+}\psi dx+\int_\xO g(u)\psi|x|^{\tau_+} dx= \int_{\Omega \setminus \{0\} } \psi |x|^{\tau_+}d\nu +  \ell \int_{\xO}\xF_{s,\xm}^\xO (-\xD)^s_{\tau_+}\psi dx,\ \, \forall\psi\in \mathbf{X}_\xm(\xO;|x|^{-b}).
	\ea 	}
%
%
%
%
\end{definition}

For $q>1$, we define
\ba\label{subcond}
\Lambda_{g,q}:=\int_1^\infty (g(t)-g(-t))t^{-1-q}dt.
\ea
We also put
\ba \label{psmu*}
p_{s,\mu}^* := \min\left\{ \frac{N}{N-2s}, \frac{N+\tau_+}{-\tau_-}  \right\}.
\ea
We note that $p_{s,\mu}^*$ is the Serrin type critical exponent. If $\mu \geq 0$ then $p_{s,\mu}^*=\frac{N}{N-2s}$, and if $\mu_0\leq \mu<0$ then $p_{s,\mu}^*=\frac{N+\tau_+}{-\tau_-}$.

The first main result deals with the case when the datum $\tilde \nu$ is concentrated away from the origin.
\begin{theorem} \label{existence-semi-1}
	Assume $\mu \geq \mu_0$, $\tilde \nu = \xn\in\GTM(\Omega\setminus\{0\};|x|^{\tau_+})$ and $g\in C(\xO)$ is a nondecreasing function such that $g(0)=0$
	and $\Lambda_{g,\frac{N}{N-2s}}<+\infty$.
	Then there exists a unique weak solution $u_{\nu,0}$ to problem \eqref{eq:g(u)-kdirac-sum}.
	Moreover, for any $b<2s-\tau_+$,
	\ba \label{apriori-1}
	\|  u_{\nu,0} \|_{L^1(\Omega;|x|^{-b})} \leq C(N,\Omega,s,\mu,b)\| \nu \|_{\GTM(\Omega \setminus \{0\};|x|^{\tau_+} )}.
	\ea
\end{theorem}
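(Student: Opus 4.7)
My plan is to follow the classical Brezis--V\'eron scheme for semilinear equations with measure absorption, adapted to the weighted dual framework developed in \cite{CGN} for $\CL_\mu^s$. First I would approximate $\nu$ by a sequence $\{\nu_n\}\subset C_c^\infty(\Omega\setminus\{0\})$ with $\nu_n\to\nu$ weakly as Radon measures and $\|\nu_n\|_{\GTM(\Omega\setminus\{0\};|x|^{\tau_+})}\leq \|\nu\|_{\GTM(\Omega\setminus\{0\};|x|^{\tau_+})}$; this is a standard mollification since $\supp\nu$ sits at positive distance from $0$. For fixed $n$, I would solve the semilinear problem with datum $\nu_n$ by first truncating $g$ at height $k$ to obtain a bounded Lipschitz nondecreasing $g_k$, and then applying Schauder's fixed point theorem to the map $v\mapsto \mathbb T(\nu_n - g_k(v)|x|^{\tau_+})$, where $\mathbb T$ is the linear solution operator from \cite{CGN}, on a suitable closed ball of $L^1(\Omega;|x|^{\tau_+})$. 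Monotonicity of $g_k$ in $k$, together with the comparison principle for the linear problem, allows one to pass $k\to\infty$ by monotone convergence and produce a solution $u_n$ of the full semilinear problem with smooth datum $\nu_n$.

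The main work is the passage $n\to\infty$. The linear $L^1$ estimate from \cite{CGN} applied to $u_n$ yields, for every $b<2s-\tau_+$, the uniform bound
\begin{equation*}
\|u_n\|_{L^1(\Omega;|x|^{-b})} + \|g(u_n)\|_{L^1(\Omega;|x|^{\tau_+})} \leq C\,\|\nu\|_{\GTM(\Omega\setminus\{0\};|x|^{\tau_+})},
\end{equation*}
which passes to the limit to produce \eqref{apriori-1}. To upgrade this to convergence of $g(u_n)$ in $L^1(\Omega;|x|^{\tau_+})$, I would establish a weak-$L^{\,N/(N-2s)}$ (Marcinkiewicz-type) estimate for $u_n$ with respect to the weighted measure $|x|^{\tau_+}dx$. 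This is derived from the Green representation $u_n = \int G_\mu^\Omega(\cdot,y)\,d\nu_n(y)$, combined with the two-sided kernel bounds in \cite{CGN} and the local interior estimates of Proposition~\ref{est1}; since $\supp\nu$ is away from $0$, the singularity that drives the exponent is the Riesz part $|x-y|^{-(N-2s)}$ rather than the Hardy profile at the origin, which is precisely why the Serrin exponent $N/(N-2s)$ (and not $p_{s,\mu}^*$) governs the subcriticality hypothesis. The layer-cake formula, together with $\Lambda_{g,N/(N-2s)}<+\infty$, then delivers equi-integrability of $\{g(u_n)\}$ in $L^1(\Omega;|x|^{\tau_+})$, so Vitali's theorem allows passage to the limit in the weak formulation \eqref{sol:g(u)-kdiracdef}.

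Uniqueness follows from a Kato-type inequality in the weighted dual formulation associated with $(-\Delta)^s_{\tau_+}$. If $u^{(1)}, u^{(2)}$ are two weak solutions with the same datum $\nu$, then $w := u^{(1)}-u^{(2)}$ satisfies $\CL_\mu^s w + (g(u^{(1)})-g(u^{(2)})) = 0$ in the weighted distributional sense; the Kato inequality gives $\CL_\mu^s |w| \leq -\sign(w)\,(g(u^{(1)})-g(u^{(2)}))$, whose right-hand side is nonpositive by monotonicity of $g$. The weak maximum principle for $\CL_\mu^s$ available from \cite{CGN} then forces $w\equiv 0$.

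The main obstacle I anticipate is the weighted Marcinkiewicz estimate. Even when $\supp\nu$ is disjoint from $0$, the Green function $G_\mu^\Omega(\cdot,y)$ carries the Hardy profile $\Phi_{s,\mu}^\Omega$ near the origin for every $y\in\Omega$, and one must verify that this profile is precisely compensated by the weight $|x|^{\tau_+}$, so that only the Riesz singularity $|x-y|^{-(N-2s)}$ contributes to the critical exponent $N/(N-2s)$. Obtaining this decomposition with a constant that depends only on $\|\nu\|_{\GTM(\Omega\setminus\{0\};|x|^{\tau_+})}$ and is uniform in $n$ is the key technical ingredient on which both existence and the a priori bound \eqref{apriori-1} rest.
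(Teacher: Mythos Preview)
Your overall architecture is sound and close to the paper's, but there is one genuine gap and one methodological divergence worth flagging.

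\textbf{The gap.} You assert that ``$\supp\nu$ sits at positive distance from $0$'', and the rest of your argument (mollification with a single $n\to\infty$ limit, uniform Marcinkiewicz bound) leans on this. But the hypothesis $\nu\in\GTM(\Omega\setminus\{0\};|x|^{\tau_+})$ only says $\int_{\Omega\setminus\{0\}}|x|^{\tau_+}\,d|\nu|<\infty$; the support of $\nu$ may accumulate at the origin. This matters because the Marcinkiewicz estimate you need (Proposition~\ref{Marcin-3} in the paper) is proved only for measures with $\dist(\supp|\nu|,\{0\})\geq 4r_0>0$, and the constant depends on $r_0$. The paper repairs this with an extra approximation layer: set $\nu_r=\1_{\Omega\setminus B_r(0)}\nu$, solve for each $r$, and then pass $r\to 0$ at the very end via a Cauchy-sequence argument in $L^1(\Omega;|x|^{-b})$ driven by Kato's inequality (this is exactly ``Step~2'' of Theorem~\ref{existence4}). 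Without this additional step your scheme does not cover general $\nu$.

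\textbf{The methodological difference.} For the weighted weak-$L^{N/(N-2s)}$ estimate you propose a Green-kernel route: write $u_n=\int G_\mu^\Omega(\cdot,y)\,d\nu_n(y)$ and invoke two-sided kernel bounds. The paper does \emph{not} take this path and does not record such pointwise Green bounds; instead it obtains Proposition~\ref{Marcin-3} by combining (i) a local $L^\infty$ bound near the origin, $|u(x)|\leq C|x|^{\tau_+}$ on $B_{r/4}$ (Theorem~\ref{est1}, proved via a De~Giorgi--type iteration adapted from \cite{DKP,KMS}), with (ii) a level-set estimate away from the origin obtained by testing the variational equation with the truncation $v_\lambda=\max\{-\lambda,\min\{v,\lambda\}\}$ and applying the weighted Sobolev inequality (Lemma~\ref{est2}). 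Your Green-function approach may well succeed if sharp two-sided bounds for $G_\mu^\Omega$ are available (e.g.\ from \cite{B,JW}), but you should not attribute them to \cite{CGN}. The paper's route is more self-contained within the variational framework it sets up. Finally, the paper builds the approximate solutions variationally (minimisation of a coercive functional, Proposition~\ref{exist-semi-varsol}) rather than by Schauder fixed point; either works, but the variational route pairs naturally with the Kato inequality \eqref{kato1} that drives the comparison structure $-v_2\leq -u_2\leq u\leq u_1\leq v_1$ used repeatedly in the convergence steps.
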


Let us point out that the weak solution stated in the above theorem is constructed by using an approximation procedure in which the local behavior near the origin of the weak solution to the Poisson problem plays an essential role.  It is worth noting that ODE techniques, which are efficient in the local case, no longer fit our setting well. Instead, we use a different approach based on the adaption of ideas in \cite{DKP} and \cite{KMS}, the assumption that $\supp \nu \cap \{0\} = \emptyset$, and a performance of tedious calculations. The desired local behavior allows us to derive weak Lebesgue estimates, which in turn ensures the convergence of  approximating nonlinearities.   

\medskip

The next result treats the case when $\mu>\mu_0$ and $\tilde \nu$ is concentrated at the origin.

\begin{theorem} \label{semi-dirac>}
Assume $\xm>\xm_0$, $\tilde \nu=\ell \delta_0$ for some $\ell \in\BBR$  and $\Lambda_{g,\frac{N+\tau_+}{\tau_-}}<+\infty$.  Then there exists a unique weak solution $u_{0,\ell}$ of problem \eqref{eq:g(u)-kdirac-sum}.
\end{theorem}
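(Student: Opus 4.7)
The plan is to mirror the approximation scheme that underlies Theorem~\ref{existence-semi-1}, but now with the source concentrated at the origin, so that the natural singular profile is the full fundamental solution $\Phi_{s,\mu}^\Omega$. Observe first that the symmetry $(u,g(\cdot),\ell)\mapsto(-u,-g(-\cdot),-\ell)$ preserves both the equation and the hypothesis $\Lambda_{g,q}<\infty$ with $q:=(N+\tau_+)/(-\tau_-)$, so we may assume $\ell\geq 0$. For each $n\in\N$ I would truncate, setting $g_n:=\max\{-n,\min\{g,n\}\}$, a bounded nondecreasing continuous function with $g_n(0)=0$, and look for a weak solution $u_n$ of
\begin{equation*}
\CL_\mu^s u_n+g_n(u_n)=\ell\delta_0\text{ in }\Omega,\qquad u_n=0\text{ in }\R^N\setminus\Omega.
\end{equation*}
Existence of $u_n$ would follow from a Schauder fixed-point argument applied to the map $v\mapsto u$, where $u$ is the unique weak solution---supplied by the linear theory of \cite{CGN} recalled in \S\ref{basic}---of $\CL_\mu^s u=\ell\delta_0-g_n(v)|x|^{\tau_+}$, on a closed ball of $L^1(\Omega;|x|^{-b})$ for a fixed $b<2s-\tau_+$; continuity and compactness are routine consequences of the boundedness of $g_n$ together with the linear regularity in \cite{CGN}.

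The heart of the argument is uniform control of the approximate solutions. Since $\ell\delta_0\geq 0$ and $g_n(0)=0$, the linear weak maximum principle gives $u_n\geq 0$; moreover, with $w:=\ell c_{s,\mu}^{-1}\Phi_{s,\mu}^\Omega$ (the weak solution of $\CL_\mu^s w=\ell\delta_0$), one has $\CL_\mu^s(w-u_n)=g_n(u_n)\geq 0$, whence
\begin{equation*}
0\leq u_n(x)\leq \ell\, c_{s,\mu}^{-1}\Phi_{s,\mu}^\Omega(x)\qquad\text{a.e. in }\Omega.
\end{equation*}
Invoking the asymptotic $\Phi_{s,\mu}^\Omega(x)\lesssim|x|^{\tau_-}$ from \eqref{PhiOmega}, a direct weighted distribution-function computation shows that $\{u_n\}$ is uniformly bounded in the Marcinkiewicz space $M^{q}(\Omega;|x|^{\tau_+})$, namely $\int_{\{u_n>t\}}|x|^{\tau_+}\,dx\lesssim t^{-q}$. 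A standard layer-cake estimate then converts the subcriticality $\Lambda_{g,q}<\infty$ into equi-integrability of $\{g_n(u_n)\}$ in $L^1(\Omega;|x|^{\tau_+})$.

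To conclude existence, linear stability (as in \cite{CGN}) yields that, up to a subsequence, $u_n\to u$ a.e.\ and in $L^1(\Omega;|x|^{-b})$, while Vitali's theorem combined with the preceding equi-integrability gives $g_n(u_n)\to g(u)$ in $L^1(\Omega;|x|^{\tau_+})$; passing to the limit in the weak formulation \eqref{sol:g(u)-kdiracdef} produces the weak solution $u_{0,\ell}$. For uniqueness, I would subtract two weak solutions $u_1,u_2$, note that the difference satisfies $\CL_\mu^s(u_1-u_2)=g(u_2)-g(u_1)$ in the weak sense with vanishing Dirac component, and test with an admissible function in $\mathbf{X}_\mu(\Omega;|x|^{-b})$ that approximates $\sign(u_1-u_2)$ in the spirit of Kato's inequality; the monotonicity of $g$ then forces $u_1=u_2$.

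The step I expect to be the main obstacle is the cascade \emph{pointwise comparison $\Longrightarrow$ Marcinkiewicz bound $\Longrightarrow$ equi-integrability}: the pointwise bound relies on a weak maximum principle for $\CL_\mu^s$ with Dirac data at the origin which must be exported cleanly from \cite{CGN} in a form compatible with the nonlinear perturbation $g_n(u_n)$, and the identification of the Marcinkiewicz exponent $(N+\tau_+)/(-\tau_-)$ with the precise threshold in $\Lambda_{g,q}<\infty$ is exactly what makes the subcriticality sharp. Further care is needed in the uniqueness step to verify that the Kato-type test function actually lies in $\mathbf{X}_\mu(\Omega;|x|^{-b})$, which is non-obvious owing to the weighted setting and the singular behaviour of the solutions near the origin.
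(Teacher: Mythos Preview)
Your overall strategy---reduce to $\ell\geq 0$, truncate to bounded $g_n$, obtain approximate solutions $u_n$, bound $0\leq u_n\leq \ell\Phi_{s,\mu}^\Omega$, pass to the limit via subcriticality, and deduce uniqueness from Kato---is the paper's, but two steps are executed differently. For the bounded problem the paper does \emph{not} use a Schauder fixed point: it works in the punctured domain $\Omega\setminus B_\varepsilon(0)$, prescribes $\ell\Phi_{s,\mu}^\Omega$ as exterior datum on $B_\varepsilon(0)$, and applies monotone-operator theory on the convex set $\mathcal K_\varepsilon$ (Lemmas~\ref{dual}--\ref{aproxsol}, Theorem~\ref{existence-gLinf}). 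This delivers the two-sided bound $(\ell\Phi_{s,\mu}^\Omega-\psi_M)^+\leq u_\varepsilon\leq\ell\Phi_{s,\mu}^\Omega$ and the asymptotic $u_n/\Phi_{s,\mu}^\Omega\to\ell$ at the origin as part of the construction, which is then matched with \cite[Theorem~4.14]{CW} to identify the weak solution in the sense of Definition~\ref{sol:semieq-Dirac}. Your Schauder route is viable, but ``routine'' undersells it: you would still need compactness of the map (split off the fixed Dirac part $\ell\Phi_{s,\mu}^\Omega$ and use variational regularity for the remainder) and then recover the comparison bounds a posteriori. For the limit $n\to\infty$ the paper exploits the monotonicity $u_{n+1}\leq u_n$ and uses \emph{dominated} convergence with the explicit dominant $g(\ell|x|^{\tau_-})$, whose membership in $L^1(\Omega;|x|^{\tau_+})$ is a one-line change of variables from $\Lambda_{g,(N+\tau_+)/(-\tau_-)}<\infty$; your Marcinkiewicz--Vitali detour is correct but unnecessary here. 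Two minor corrections: the source in your fixed-point map should be $\ell\delta_0-g_n(v)$, without the extra factor $|x|^{\tau_+}$ (that weight belongs to the test-function pairing, not to the datum); and for uniqueness the paper tests directly with the fixed $\xi_b\in\mathbf X_\mu(\Omega;|x|^{-b})$ solving $\CL_\mu^s\xi_b=|x|^{-b}$ (see~\eqref{xib}) and invokes~\eqref{Kato:+-1}, so no approximation of $\sign(u_1-u_2)$ is needed.
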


Let us sketch the idea of the proof. Since the datum $\tilde \nu$ is concentrated at the origin, we first construct approximating solutions $u_{\varepsilon}$ in $\Omega \setminus B_{\varepsilon}(0)$ by using the standard theory of monotone operators. Then we establish two-sided estimates on $u_{\varepsilon}$ expressed in term of $\Phi_{s,\mu}^{\Omega}$, which allow us to derive the existence and the asymptotic behavior of a weak solution to problem \eqref{eq:g(u)-kdirac-sum}.  

When $\mu=\mu_0$, a logarithmic correction is involved, which is reflected in the following result. For $q>1$, set
\ba\label{critcond}
\tilde \Lambda_{g,q}:=\int_1^\infty (g(|\ln t|t) - g(-|\ln t|t))t^{-1-q}dt.
\ea
By an analog argument as in the proof of Theorem \ref{semi-dirac>}, we can show that
\begin{theorem} \label{semi-dirac=}
	Assume $\xm=\xm_0$, $\tilde \nu=\ell \delta_0$ for some $\ell \in\BBR,$  and $\tilde \Lambda_{g,\frac{N+2s}{N-2s}}<+\infty$.  Then there exists a unique weak solution $u_{0,\ell}$ of \eqref{eq:g(u)-kdirac-sum}.
\end{theorem}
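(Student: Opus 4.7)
\medskip

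\noindent\textit{Proof plan.} The strategy mirrors that of Theorem \ref{semi-dirac>}, modified to accommodate the logarithmic enhancement of the fundamental solution at the critical coupling $\xm=\xm_0$. Here $\tau_+=\tau_-=-\tfrac{N-2s}{2}$ and, by \eqref{fu} together with \eqref{PhiOmega}, one has $\xF_{s,\xm_0}^\xO(x)\sim |x|^{-(N-2s)/2}|\ln|x||$ as $x\to 0$. This extra $|\ln|x||$ factor is precisely what dictates the appearance of $|\ln t|$ inside $g$ in the definition \eqref{critcond} of $\tilde\Lambda_{g,q}$. Note also that $(N+\tau_+)/(-\tau_-) = (N+2s)/(N-2s)$ at $\xm=\xm_0$, so the subcritical exponent is the natural limiting analogue of the one appearing in Theorem \ref{semi-dirac>}.

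For each $\xe>0$, the first task is to construct a bounded weak solution $u_\xe$ of an approximate problem obtained by replacing $\ell\delta_0$ with a regular datum of total mass $\ell$ concentrated in a thin shell near the origin. Existence of $u_\xe$ would follow from the theory of maximal monotone operators applied in the Hilbert space $H^s_0(\xO;|x|^{\tau_+})$ of \cite{CGN}, the coercivity being ensured by the sharp Hardy inequality \eqref{mu_00} (attained exactly at $\xm=\xm_0$, so the framework is delicate but still valid since we work away from the singularity). Next, adapting the comparison argument of Lemma \ref{aproxsol-sum}, I would establish the uniform two-sided pointwise estimate $|u_\xe(x)|\leq C|\ell|\,\xF_{s,\xm_0}^\xO(x)$ on $\xO\setminus\{0\}$, obtained by testing against suitable multiples of $\xF_{s,\xm_0}^\xO$ and exploiting the monotonicity of $g$. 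Near the origin this reads $|u_\xe(x)|\leq C'|\ell|\,|x|^{-(N-2s)/2}|\ln|x||$.

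The decisive analytical point is a Marcinkiewicz-type level set estimate with logarithmic correction inherited from the pointwise bound: for $t$ large one obtains
\be
|\{x\in\xO:|u_\xe(x)|>t\}|\leq C\, t^{-\frac{2N}{N-2s}}(\ln t)^{\frac{2N}{N-2s}}.
\ee
Inserting this into the layer-cake expression for $\int_\xO (g(u_\xe)-g(-u_\xe))|x|^{\tau_+}\,dx$ and performing the substitution $t\mapsto |\ln t|\,t$ reproduces exactly the integrand of $\tilde\Lambda_{g,(N+2s)/(N-2s)}$; hence the subcritical hypothesis delivers a uniform $L^1(\xO;|x|^{\tau_+})$ bound together with equi-integrability of $\{g(u_\xe)\}_{\xe>0}$. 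A subsequence converges a.e.\ by the pointwise bound and local regularity, and Vitali's theorem allows me to pass to the limit $\xe\to 0^+$ in \eqref{sol:g(u)-kdiracdef} tested against arbitrary $\psi\in\mathbf{X}_\xm(\xO;|x|^{-b})$, producing the weak solution $u_{0,\ell}$. Uniqueness is obtained by testing the difference of two candidate solutions against duals $(-\xD)^s_{\tau_+}\psi$ and invoking monotonicity of $g$, exactly as in Theorem \ref{semi-dirac>}.

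The principal obstacle is Step~3: the $|\ln|x||$ factor must be tracked with care through the Marcinkiewicz inversion, so that the precise form of $\tilde\Lambda_{g,(N+2s)/(N-2s)}$ emerges rather than a weaker or stronger surrogate, and so that the saturating case $\xm=\xm_0$ of the Hardy inequality does not destroy coercivity in the approximating scheme. Once these two logarithmic bookkeeping issues are resolved, the remaining arguments are routine modifications of those used for $\xm>\xm_0$.
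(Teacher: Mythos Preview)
Your outline is in the right spirit but diverges from the paper's argument in two places, one of which is a genuine gap.

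The paper's proof (literally: ``similar to Theorem \ref{semi-dirac>} with minor modification'') truncates the nonlinearity, not the measure. One sets $g_n=\max\{-n,\min\{g,n\}\}$ and invokes Theorem \ref{existence-gLinf} --- which is stated and proved for all $\mu\geq\mu_0$, critical case included --- to obtain a monotone decreasing sequence of weak solutions $u_n$ for the exact datum $\ell\delta_0$, each satisfying the pointwise bound $0\leq u_n\leq \ell\xF_{s,\xm_0}^\xO$ from \eqref{est}. Convergence of $g_n(u_n)$ in $L^1(\xO;|x|^{\tau_+})$ then follows by \emph{dominated} convergence with the single majorant $g(\ell\xF_{s,\xm_0}^\xO)$: since $\xF_{s,\xm_0}^\xO(x)\leq c|x|^{-(N-2s)/2}|\ln|x||$ near $0$, the change of variable $t\sim |x|^{-(N-2s)/2}|\ln|x||$ turns $\int_\xO g(\ell\xF_{s,\xm_0}^\xO)|x|^{\tau_+}\,dx$ into the integral defining $\tilde\Lambda_{g,(N+2s)/(N-2s)}$. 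No Marcinkiewicz estimate and no Vitali argument are needed.

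Your scheme instead replaces $\ell\delta_0$ by smooth data in a shell and then asserts the uniform pointwise bound $|u_\xe|\leq C|\ell|\xF_{s,\xm_0}^\xO$ ``by testing against suitable multiples of $\xF_{s,\xm_0}^\xO$''. This is where the gap lies. In Lemma \ref{aproxsol} (and \ref{aproxsol-sum}) the bound $u_\xe\leq \ell\xF_{s,\xm}^\xO$ holds because the approximate problem is posed on $\xO\setminus B_\xe(0)$ with the exterior datum $u_\xe=\ell\xF_{s,\xm}^\xO$ \emph{prescribed} on $B_\xe(0)$; the comparison is then immediate since $\xF_{s,\xm}^\xO$ is $\CL_\mu^s$-harmonic in $\xO\setminus\{0\}$ and matches on the complement. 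If instead the datum is a mollifier supported in $B_\xe(0)$ and the equation is solved on all of $\xO$, then $\ell\xF_{s,\xm_0}^\xO$ is not a supersolution for your problem (it satisfies $\CL_\mu^s\xF_{s,\xm_0}^\xO=0$ away from $0$, not $\geq \nu_\xe$ on $\supp\nu_\xe$), and the comparison you describe does not close. Obtaining the bound in your setup would require a pointwise Green-function estimate near the pole, which the paper neither states nor needs. Once you switch to the paper's approximation --- truncate $g$, keep $\ell\delta_0$ --- the pointwise bound is free from \eqref{est}, and the logarithmic bookkeeping reduces to the one-line dominated-convergence check above.
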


Next we treat the case where the measures may have support on the whole domain $\Omega$.

\begin{theorem} \label{dirac>-sum}
	Assume $\xm>\xm_0$, $\tilde \nu = \nu + \ell \delta_0$ for some $\nu \in \GTM^+(\Omega \setminus \{0\};|x|^{\tau_+})$, $\ell \geq 0$ and $\Lambda_{g,p_{s,\mu}^*}<+\infty$.  Then there exists a unique weak solution of problem \eqref{eq:g(u)-kdirac-sum}. 

\end{theorem}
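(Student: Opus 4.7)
The strategy is to unify the arguments of Theorems \ref{existence-semi-1} and \ref{semi-dirac>} via a monotone approximation scheme exploiting the nonnegativity of $\nu$ and $\ell$. Decompose $\tilde\nu = \nu + \ell\delta_0$ and choose an increasing sequence $\nu_n \in \mathfrak{M}^+(\Omega\setminus\{0\};|x|^{\tau_+}) \cap L^\infty$ with compact support in $\Omega\setminus\{0\}$ (for instance, by truncation on annuli $\{1/n<|x|<n\}\cap\Omega$ followed by capping of the density) such that $\nu_n \uparrow \nu$. Simultaneously truncate $g$ at height $n$ to obtain a bounded nondecreasing nonlinearity $g_n$ with $g_n(0)=0$.

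For each $n$, I would construct a weak solution $u_n$ of $\mathcal{L}_\mu^s u_n + g_n(u_n) = \nu_n + \ell\delta_0$ in $\Omega$, $u_n=0$ on $\R^N\setminus\Omega$, by monotone iteration between the zero subsolution and the linear supersolution $w_n$, where $w_n$ is the weak solution of $\mathcal{L}_\mu^s w_n = \nu_n + \ell\delta_0$ obtained in \cite{CGN}. The comparison principle of \cite{CGN} yields existence, uniqueness and positivity of $u_n$ with $0 \leq u_n \leq w_n$. The key step is a uniform weak-type Marcinkiewicz estimate for $u_n$: combining the weak Lebesgue bound for the Green contribution of $\nu_n$ (yielding the Serrin exponent $N/(N-2s)$, as in the proof of Theorem \ref{existence-semi-1}) with the sharp pointwise bound $\Phi_{s,\mu}^\Omega(x)\leq c|x|^{\tau_-}$ from \eqref{PhiOmega} (yielding $(N+\tau_+)/(-\tau_-)$ near the origin, as in the proof of Theorem \ref{semi-dirac>}), one controls $u_n$ uniformly in $n$ in a weighted weak $L^{p_{s,\mu}^*}$-type space. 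Here Lemma \ref{aproxsol-sum} and Proposition \ref{est1} supply the essential two-sided bounds and local regularity. Combined with $\Lambda_{g,p_{s,\mu}^*}<\infty$ and a layer-cake computation, this uniform estimate implies equi-integrability of $\{g_n(u_n)\}$ in $L^1(\Omega;|x|^{\tau_+})$.

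Since the data are monotone in $n$, the comparison principle for the approximate problems gives $u_n\uparrow u$ almost everywhere for some $u$, and Vitali's theorem together with the equi-integrability permits passing to the limit in the weak formulation \eqref{sol:g(u)-kdiracdef} against every $\psi \in \mathbf{X}_\mu(\Omega;|x|^{-b})$, identifying $u$ as the sought weak solution. Uniqueness follows from a Kato-type inequality argument in the weighted fractional framework of \cite{CGN}: the difference of two solutions $u_1, u_2$ satisfies $\mathcal{L}_\mu^s(u_1-u_2)+g(u_1)-g(u_2)=0$ distributionally, and testing with a regularized sign of $u_1-u_2$ combined with the monotonicity of $g$ forces $u_1\equiv u_2$.

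The main obstacle will be the uniform weak-type estimate: one must combine the Green-kernel contribution governing decay away from the origin (with exponent $N/(N-2s)$) and the Hardy-kernel contribution $\Phi_{s,\mu}^\Omega$ governing the singularity at the origin (with exponent $(N+\tau_+)/(-\tau_-)$) into a single bound with the minimum exponent $p_{s,\mu}^*$. The overlap between the two regimes and the interplay between the weight $|x|^{\tau_+}$ appearing in the test-function framework and the singular profile $|x|^{\tau_-}$ of $\Phi_{s,\mu}^\Omega$ must be handled through a careful dyadic decomposition around the origin. Once this uniform bound is in hand, the remainder reduces to a routine application of monotone convergence and Vitali's theorem.
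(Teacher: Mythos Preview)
Your overall strategy---truncate $g$, approximate $\nu$, obtain uniform weak-type bounds in $L_w^{p_{s,\mu}^*}$ by combining the Green contribution with the pointwise profile $\Phi_{s,\mu}^\Omega$, invoke Lemma~\ref{lem:equi} for equi-integrability, and conclude via Vitali and Kato---is exactly the route the paper takes. However, there is a genuine gap in the passage to the limit.

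You approximate $\nu$ and $g$ \emph{simultaneously} with a single index $n$ and then assert that ``since the data are monotone in $n$, the comparison principle \ldots\ gives $u_n\uparrow u$.'' This monotonicity does not follow. Increasing the source $\nu_n$ pushes the solution up, while increasing the absorption $g_n$ pushes it down; with both moving at once no comparison principle applies. Concretely, to show $u_n\le u_{n+1}$ you would need $u_n$ to be a subsolution of the $(n{+}1)$-problem, i.e.\ $g_{n+1}(u_n)-g_n(u_n)\le \nu_{n+1}-\nu_n$, and there is no reason this holds. Without monotonicity you lose a.e.\ convergence of $\{u_n\}$, and Vitali's theorem alone does not manufacture a limit.

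The paper avoids this by \emph{decoupling} the three approximations: with $r,\delta$ fixed it first sends $n\to\infty$ (here $g_n\uparrow g$ and the solutions \emph{decrease}, so monotone convergence plus the equi-integrability from Lemma~\ref{lem:equi} applies); then $\delta\to 0$ by the same mechanism; finally $r\to 0$, where now the nonlinearity is the fixed $g$ and the sources $\nu_r$ \emph{increase}, so the solutions $u_{\nu_r,\ell}$ increase and one passes to the limit by monotone convergence using the uniform $L^1$ a~priori bound obtained by testing with $\xi_b$. This last point matters for a second reason: the constant in Proposition~\ref{Marcin-3} depends on $\mathrm{dist}(\mathrm{supp}\,\nu,\{0\})$, so the weak-type estimate you invoke is \emph{not} uniform as $\nu_n\to\nu$; the paper uses it only while $r$ is frozen, and handles the $r\to 0$ step through monotonicity and the $r$-independent $L^1$ bound instead. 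Your single-index scheme collapses precisely the structure that makes both limit passages work.
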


The existence of the weak solution stated in Theorem \ref{dirac>-sum} is also based on an approximation procedure, however since $\tilde \nu$ is supported on the whole domain $\Omega$, the analysis is more complicated and requires the treatment near the origin and far from the origin at the same time. We stress that in the proof of Theorem \ref{dirac>-sum}, no $\Delta_2$-condition is needed. Therefore, Theorem \ref{dirac>-sum} does not only extend \cite[Theorem B]{CV} to the local case, but also improve that result in the sense that the  $\Delta_2$-condition can be relaxed.

Finally, in case $\mu=\mu_0$, we also obtain the existence an uniqueness result.

\begin{theorem} \label{dirac=-sum}
Let $\xm=\xm_0$, $\tilde \nu = \nu + \ell \delta_0$ for some $\nu \in \GTM^+(\Omega \setminus \{0\};|x|^{\tau_+})$, $\ell \geq 0$ and $\Lambda_{g,\frac{N}{N-2s}}<\infty$.  Then there exists a unique weak  solution of \eqref{eq:g(u)-kdirac-sum}.
\end{theorem}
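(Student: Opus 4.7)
The argument parallels the proof of Theorem \ref{dirac>-sum}, applying the linear machinery of \cite{CGN} at the critical parameter $\xm=\xm_0$. A basic observation is that at $\xm_0$ the two indicial roots coincide, $\tau_+=\tau_-=-\frac{N-2s}{2}$, and hence
\begin{equation*}
p_{s,\xm_0}^* \;=\; \min\Bigl\{\tfrac{N}{N-2s},\ \tfrac{N+\tau_+}{-\tau_-}\Bigr\} \;=\; \tfrac{N}{N-2s},
\end{equation*}
which is exactly the integrability threshold appearing in the subcritical assumption $\Lambda_{g,N/(N-2s)}<+\infty$. This matching is what makes the result accessible by the same scheme as the super-critical case.

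\textbf{Approximation.} I would write $\nu=\lim_n\nu_n$ with $\{\nu_n\}\subset L^\infty(\Omega\setminus\{0\};|x|^{\tau_+})$ a nondecreasing nonnegative approximation satisfying $\|\nu_n\|_{\GTM(\Omega\setminus\{0\};|x|^{\tau_+})}\le\|\nu\|_{\GTM(\Omega\setminus\{0\};|x|^{\tau_+})}$, and mollify $\ell\delta_0$ by $\ell\varrho_n$ with smooth $\varrho_n\ge0$, $\int\varrho_n=1$. With the truncation $g_n(t):=(-n)\vee g(t)\wedge n$, the linear theory of \cite{CGN} combined with standard monotone-operator arguments produces a weak solution $u_n\in H^s_0(\Omega;|x|^{\tau_+})$ of
\begin{equation*}
\CL_{\xm_0}^s u_n + g_n(u_n) \;=\; \nu_n + \ell\varrho_n \quad\text{ in }\Omega,\qquad u_n=0\ \text{ in }\BBR^N\setminus\Omega,
\end{equation*}
and a comparison argument yields $u_n\ge 0$.

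\textbf{Uniform two-sided bound.} A Kato-type inequality (adapting the derivation of Lemma \ref{aproxsol-sum} from $\xm>\xm_0$ to $\xm=\xm_0$), together with the positivity of $g_n(u_n)$, gives the pointwise domination
\begin{equation*}
0\;\le\; u_n(x)\;\le\; \BBG_{\xm_0}^\Omega[\nu_n](x)\,+\,\ell\,\Phi_{s,\xm_0}^\Omega(x)\qquad\text{a.e.~in }\Omega,
\end{equation*}
where $\BBG_{\xm_0}^\Omega$ is the Green operator of $\CL_{\xm_0}^s$ from \cite{CGN}. The Marcinkiewicz estimates for $\BBG_{\xm_0}^\Omega$ in $L^{p_{s,\xm_0}^*,\infty}(\Omega;|x|^{\tau_+})$, together with the boundary behavior \eqref{PhiOmega} of $\Phi_{s,\xm_0}^\Omega$, then yield a uniform bound on $\{u_n\}$ in $L^{N/(N-2s),\infty}(\Omega;|x|^{\tau_+})$. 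The layer-cake formula and the hypothesis $\Lambda_{g,N/(N-2s)}<+\infty$ subsequently imply that $\{g_n(u_n)\}$ is equi-integrable in $L^1(\Omega;|x|^{\tau_+})$.

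\textbf{Passage to the limit, uniqueness, and main obstacle.} Compactness of the resolvent of $\CL_{\xm_0}^s$ from \cite{CGN} gives $u_n\to u$ almost everywhere along a subsequence; Vitali's theorem then upgrades this to $g_n(u_n)\to g(u)$ in $L^1(\Omega;|x|^{\tau_+})$, and passing to the limit in the weak formulation against every $\psi\in\mathbf{X}_{\xm_0}(\Omega;|x|^{-b})$ with $b<2s-\tau_+$ shows that $u$ is a weak solution. Uniqueness follows from the standard Kato argument: test the difference of two weak solutions against a nonnegative element of $\mathbf{X}_{\xm_0}(\Omega;|x|^{-b})$ and invoke the monotonicity of $g$. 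The principal technical obstacle is the critical nature of $\xm_0$: the fundamental solution $\Phi_{s,\xm_0}$ carries a logarithmic correction and the indicial roots coalesce. The crucial point is that this log-factor does not destroy membership in $L^{N/(N-2s),\infty}(\Omega;|x|^{\tau_+})$, since the base decay $|x|^{-(N-2s)/2}$ near the origin is strictly better than the $N/(N-2s)$-Marcinkiewicz threshold; verifying this sharp compatibility at $\xm_0$, simultaneously for $\BBG_{\xm_0}^\Omega$ and for $\Phi_{s,\xm_0}^\Omega$, is where the argument must be executed more delicately than in Theorem \ref{dirac>-sum}.
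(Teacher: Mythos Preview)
Your overall strategy—truncate $g$, establish a pointwise upper bound by the linear solution plus $\ell\Phi_{s,\mu_0}^\Omega$, derive uniform weak-$L^{N/(N-2s)}$ control, invoke $\Lambda_{g,N/(N-2s)}<\infty$ for equi-integrability, and pass to the limit by Vitali—is the same as the paper's. Your observation that the logarithmic correction in $\Phi_{s,\mu_0}$ does not destroy membership in $L_w^{N/(N-2s)}(\Omega;|x|^{\tau_+})$ (since the base exponent $-\frac{N-2s}{2}$ is strictly subcritical for that space) is precisely the point that makes the adaptation from $\mu>\mu_0$ to $\mu=\mu_0$ work.

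There is, however, a genuine gap in your approximation scheme. You mollify the Dirac mass by $\ell\varrho_n$ and then claim the pointwise bound $u_n\le \BBG_{\mu_0}^\Omega[\nu_n]+\ell\Phi_{s,\mu_0}^\Omega$, citing Lemma~\ref{aproxsol-sum}. But that lemma does \emph{not} concern mollified Diracs: it constructs approximations on the punctured domain $\Omega_\varepsilon=\Omega\setminus B_\varepsilon(0)$ with the prescribed exterior datum $\ell\Phi_{s,\mu}^\Omega$ on $B_\varepsilon$, and the comparison with $\ell\Phi_{s,\mu}^\Omega$ comes from the fact that $\Phi_{s,\mu}^\Omega$ is harmonic for $\CL_\mu^s$ in $\Omega_\varepsilon$. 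If you instead solve $\CL_{\mu_0}^s u_n+g_n(u_n)=\nu_n+\ell\varrho_n$ on all of $\Omega$, Kato only gives $u_n\le \BBG_{\mu_0}^\Omega[\nu_n]+\ell\,\BBG_{\mu_0}^\Omega[\varrho_n]$, and replacing $\BBG_{\mu_0}^\Omega[\varrho_n]$ by $\Phi_{s,\mu_0}^\Omega$ (or bounding it uniformly in $L_w^{N/(N-2s)}$) is not justified: Proposition~\ref{Marcin-3} explicitly requires the source to be supported away from the origin, with constants depending on that distance, so it yields no uniform control as $\varrho_n$ concentrates at $0$.

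The fix is simply to follow the paper's scheme: do not mollify $\delta_0$ at all. Use Theorem~\ref{existence-gLinf-sum} (whose proof, via Lemma~\ref{aproxsol-sum}, is valid at $\mu=\mu_0$) to obtain solutions $u_{\nu_{r,\delta},\ell,n}$ for bounded $g_n$ together with the two-sided estimate \eqref{est-sum}; the upper bound $u_{\nu_{r,\delta},0,n}+\ell\Phi_{s,\mu_0}^\Omega$ then gives the needed uniform $L_w^{N/(N-2s)}$ control. Monotonicity in $n$ (from Kato) provides a.e.\ convergence without appealing to any resolvent compactness, and the subsequent limits $\delta\to0$, $r\to0$ proceed exactly as in the proof of Theorem~\ref{dirac>-sum}.
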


\noindent \textbf{Organization of the paper.} The rest of this paper is organized as follows. In Section 2,  we recall basic properties and derive local behavior of solutions to the Poisson problem with a fractional Hardy and a source.  Section 3 is devoted to the study of  weak solution to semilinear problems involving Radon measures supported away from the origin.   In Section 4, we address weak solutions of  semilinear problem involving  Dirac masses concentrated at the origin. Finally, in Section 5, we construct weak solutions in case measures have support in the whole domain $\Omega$.  \medskip

\noindent \textbf{Notation.} Throughout this paper, unless otherwise specified, we assume that $\Omega \subset \R^N$ ($N \geq 2)$ is a bounded domain containing the origin and $d(x)$ is the distance from $x \in \Omega$ to $\R^N \backslash \Omega$. We denote by $c, C, c_1, c_2, \ldots$ positive constants that may vary from one appearance to another and depend only on the data. The notation $c = c(a,b,\ldots)$ indicates the dependence of the constant $c$ on $a,b,\cdots$. For a function $u$, we denote that $u^+=\max\{u,0\}$ and $u^- = \max\{-u,0\}$. For a set $A \subset \R^N$, the function $\1_A$ denotes the indicator function of $A$. \medskip

\noindent \textbf{Acknowledgements.}  {\small \it
	
	H. Chen is supported by NNSF of China, No: 12071189 and 11431005, by the Jiangxi Provincial Natural Science Foundation, No: 20212ACB211005. 
	
	K. T. Gkikas acknowledges the support by the Hellenic Foundation for Research and Innovation (H.F.R.I.) under the “2nd Call for H.F.R.I. Research Projects to support Post-Doctoral Researchers” (Project Number: 59). 
	
	P.-T. Nguyen is supported by Czech Science Foundation, Project GA22-17403S.}

 \section{Preliminary}
\subsection{Basic study of Poisson problem } \label{basic}
In this subsection,  we recall the basic properties of Poisson problem with the fractional
Hardy operator and Radon measures in \cite{CGN}.

For $\xg\in[ \frac{2s-N}{2}, 2s),$ we denote by  $H_0^s(\xO;|x|^\gamma)$ the closure of functions in $C^\infty(\R^N)$ with compact support in $\Omega$ under the norm
\bal 
\norm{u}_{H_0^s(\Omega ;|x|^\gamma)}:= \left(\int_{\R^N}\int_{\R^N}\frac{|u(x)-u(y)|^2}{|x-y|^{N+2s}}|y|^\xg dy |x|^\xg dx\right)^{\frac{1}{2}}.
\eal
Note that $H_0^s(\xO;|x|^\gamma)$ is a Hilbert space with the inner product
\bal 
\langle u,v\rangle_{s,\gamma}:=\int_{\R^N}\int_{\R^N}\frac{\big(u(x)-u(y)\big)\big(v(x)-v(y)\big)}{|x-y|^{N+2s}}|y|^\xg dy |x|^\xg dx. 
\eal


  For $\mu \geq \mu_0$, let  $\mathbf{H}^{s}_{\mu,0}(\xO)$ be the closure of functions in $C^\infty(\R^N)$ with compact support in $\bar \Omega$ under the norm
\bal 
\norm{u}_{\mu}:=\left(\frac{C_{N,s}}{2}\int_{\R^N}\int_{\R^N}\frac{|u(x)-u(y)|^2}{|x-y|^{N+2s}}  dy  dx+\mu\int_\Omega \frac{u^2}{|x|^{2s}}dx\right)^{\frac{1}{2}}.
\eal
This is a Hilbert space with the inner product
\bal 
\ll u,v\gg_{\mu}:=\frac{C_{N,s}}{2}\int_{\R^N}\int_{\R^N}\frac{\big(u(x)-u(y)\big)\big(v(x)-v(y)\big)}{|x-y|^{N+2s}} dy   dx+\mu  \int_\Omega \frac{u(x) v(x)}{|x|^{2s}}dx.
\eal
The next result provides a relation between the above spaces.
 \begin{theorem}[{\cite[Theorem 1.1]{CGN}}] \label{th:main-1} 
Assume $\xO$ is a  bounded Lipschitz domain containing the origin. 

$(i)$ For any $\xg\in[ \frac{2s-N}{2}, 2s)$ and $\mu\geq\mu_0$,  the space  $C_0^\infty(\Omega\setminus\{0\})$ is dense in $\Hsg$ and in $\mathbf{H}_{\mu,0}^s(\Omega)$.

$(ii)$ For any $\xg\in[ \frac{2s-N}{2}, 2s)$, there is $\mu\geq \mu_0$ such that $\tp=\gamma$ and
\ba 
\Hsg =\big\{|x|^{-\gamma}u: u \in \Hm \big\}.
\ea

$(iii)$ 
Let $\gamma\in[\frac{2s-N}{2},2s)$, $\beta<2s$ and $1\leq q<\min\Big\{  \frac{2N-2\beta}{N-2s},\ \frac{2N }{N-2s}\Big\}$. Then there exists a positive constant $c=c(N,\Omega,s,\gamma,\beta,q)$ such that
\ba 
\big\| |\cdot|^{\gamma}v \big\|_{L^q(\Omega;|x|^{-\beta})} \leq c\, \| v \|_{s,\gamma}, \quad \forall v \in \Hsg.
\ea
\end{theorem}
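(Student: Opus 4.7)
The backbone of Theorem \ref{th:main-1} is the fractional Frank--Lieb--Seiringer ground-state representation
\begin{equation*}
\frac{C_{N,s}}{2}\int_{\R^N}\!\int_{\R^N}\! \frac{|u(x)-u(y)|^2}{|x-y|^{N+2s}}\,dy\,dx + \mu \int_{\R^N}\! \frac{u(x)^2}{|x|^{2s}}\,dx = \frac{C_{N,s}}{2}\int_{\R^N}\!\int_{\R^N}\! \frac{|v(x)-v(y)|^2}{|x-y|^{N+2s}}|x|^{\gamma}|y|^{\gamma}\,dy\,dx,
\end{equation*}
valid whenever $\gamma=\tau_+(s,\mu)$ and $u=|x|^{\gamma}v$ with $v\in C_c^\infty(\R^N\setminus\{0\})$. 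I would prove it by expanding the square on the right-hand side and observing, as a weak identity, that the cross term reduces to a pairing of $v^2$ with $\CL_\mu^s(|x|^{\gamma})$, which vanishes because $|x|^{\tau_+}$ solves $\CL_\mu^s w=0$ in $\R^N\setminus\{0\}$. This single identity links the two Hilbert-space norms (up to the factor $\sqrt{C_{N,s}/2}$) and explains the exact shape of the weight $|x|^\gamma|y|^\gamma$ in $\langle\cdot,\cdot\rangle_{s,\gamma}$.

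For \textbf{(ii)} I would invoke \cite[Proposition 1.2]{CW} to see that $\mu\mapsto \tau_+(s,\mu)$ is a continuous strictly monotone bijection from $[\mu_0,\infty)$ onto $[\tfrac{2s-N}{2},2s)$, so that each admissible $\gamma$ is realised by a unique $\mu$. The identity then shows that $T:v\mapsto |x|^{\gamma}v$ is a linear isomorphism of $C_c^\infty(\Omega\setminus\{0\})$ equipped with $\|\cdot\|_{s,\gamma}$ onto itself equipped with $\|\cdot\|_\mu$. Once part (i) provides density of $C_c^\infty(\Omega\setminus\{0\})$ in both $\Hsg$ and $\Hm$, $T$ extends by continuity to the claimed set equality.

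For \textbf{(i)} I would use a cutoff argument. Given $u\in\Hsg$, one first reduces to $u\in C^\infty(\R^N)$ supported in $\overline{\Omega}$ by definition of the space. Then, introducing a radial cutoff $\chi_\varepsilon$ that vanishes on $B_\varepsilon(0)$, equals $1$ off $B_{2\varepsilon}(0)$ and satisfies $|\nabla\chi_\varepsilon|\lesssim \varepsilon^{-1}$, I would show $\chi_\varepsilon u\to u$ in $\Hsg$. The central quantity
\begin{equation*}
\int_{\R^N}\!\int_{\R^N} \frac{|(1-\chi_\varepsilon(x))u(x)-(1-\chi_\varepsilon(y))u(y)|^2}{|x-y|^{N+2s}} |x|^\gamma |y|^\gamma\,dy\,dx
\end{equation*}
is split into a near-field piece on $B_{4\varepsilon}\times B_{4\varepsilon}$, bounded by $\|u\|_{L^\infty}^2\int_{B_{4\varepsilon}}\!\int_{B_{4\varepsilon}}|x-y|^{-N-2s+2}|x|^{\gamma}|y|^{\gamma}dy\,dx$ (which vanishes as $\varepsilon\to 0$ precisely because $\gamma>(2s-N)/2$), and a mixed/far piece handled by dominated convergence. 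Density in $\Hm$ follows either by transporting the approximation through the isomorphism $T$ of part (ii), or directly by cutoff plus the fractional Hardy inequality to control the singular $L^2$ term.

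For \textbf{(iii)}, writing $u=|x|^\gamma v$ the identity of part (ii) gives $\|u\|_\mu\simeq \|v\|_{s,\gamma}$, so the claim reduces to $\|u\|_{L^q(\Omega;|x|^{-\beta})}\leq c\|u\|_\mu$. When $\mu>\mu_0$ the fractional Hardy inequality yields a spectral gap $\|u\|_\mu^2\gtrsim \|u\|_{\dot{H}^s}^2$; combined with the Sobolev embedding $\dot{H}^s\hookrightarrow L^{2N/(N-2s)}$ and a H\"older estimate against the weight $|x|^{-\beta}$ on the bounded domain $\Omega$, one checks that the admissible H\"older pairing is exactly $q<\min\{(2N-2\beta)/(N-2s),2N/(N-2s)\}$ under the constraint $\beta<2s$. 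I expect the main obstacle to be the endpoint $\mu=\mu_0$ (equivalently $\gamma=(2s-N)/2$), at which $\tau_\pm$ coalesce, no spectral gap is available, and the fundamental solution acquires a logarithm; both the cutoff density in (i) and the embedding in (iii) must then be salvaged by a Hardy--Sobolev--Maz'ya type improvement, exploiting the boundedness of $\Omega$ to compensate for the loss of scaling invariance in this critical regime.
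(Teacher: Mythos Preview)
This theorem is not proved in the present paper: it is quoted from \cite[Theorem~1.1]{CGN} as background in Section~\ref{basic}, and no argument is supplied here, so there is no in-paper proof against which to compare your proposal.

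On its own merits your outline is the natural route and is largely correct. The ground-state representation you write is precisely the identity the paper itself exploits later (see the passage from \eqref{subcritsobolev0} to \eqref{subcritsobolev1}), and your handling of (ii) and of the subcritical range $\mu>\mu_0$ in (i) and (iii) is sound; the H\"older pairing in (iii) reproduces exactly the stated exponent window. You correctly isolate the endpoint $\mu=\mu_0$ (equivalently $\gamma=\tfrac{2s-N}{2}$) as the only genuine difficulty. For (iii) at the endpoint the remedy is indeed a Hardy--Sobolev--Maz'ya improvement, and the present paper invokes exactly this, citing \cite[Theorem~3]{tz} to obtain \eqref{critsobolev1}. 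For (i) at the endpoint, however, your Lipschitz cutoff gives a near-field contribution scaling like $\varepsilon^{N+2\gamma-2s}=\varepsilon^{0}$, which does not vanish; one must pass to a logarithmic cutoff (or an equivalent refined truncation) to recover decay. You flag this gap but do not close it, so the proposal is a complete plan for $\mu>\mu_0$ and an incomplete sketch at $\mu=\mu_0$.
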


Firstly, a sharp condition for the source is considered for variational solution of the Poisson problem
\ba \label{eq:1-1}
	\left\{
	\BAL
		(-\xD)^s_\gamma u &=f  \quad && \text{in }  \Omega,  \\[0.5mm]
		\qquad\quad u &=0  &&  \text{in } \R^N\setminus \Omega.
	\EAL
	\right.
\ea
Here $u$ is called a variational solution of (\ref{eq:1-1})
 if  $v\in \Hsg$ and
\bal
	\langle v,\xi\rangle_{s,\gamma} =(f,\xi)_\gamma \quad \forall \, \xi\in \Hsg,
\eal
	where
$ 
(f,\xi)_\gamma:=\int_{\xO}f\xi |x|^\gamma dx.
$ 

\begin{theorem}[{\cite[Theorem 1.3]{CGN}}] \label{th:main-2} 
Assume  $\xg\in[ \frac{2s-N}{2}, 2s)$, $\alpha\in\R$ and 
\ba \label{exs ex-0}
p>\max\Big\{  \frac{2N}{N+2s},\, \frac{2N+2\alpha}{N+2s},\ 1+\frac{\alpha}{2s}\Big\}.
\ea
For any $  f\in L^p(\Omega;|x|^{\alpha})$, problem \eqref{eq:1-1} has a unique variational solution $u$. Moreover, there exists a constant $c=c(N,\Omega,s,\gamma,\alpha,p)$ such that 
\ba \label{varisol:est-1}
\norm{u}_{s,\gamma} \leq c\, \| f\|_{L^p(\Omega;|x|^{\alpha})}.
\ea
In addition, the following Kato type inequality holds
\ba\label{kato1}
\langle u^+,\xi \rangle_{s,\gamma}
\leq (f\sign^+(u),\xi)_{\gamma}, \quad \forall \, 0\leq \xi\in \Hsg.
\ea
\end{theorem}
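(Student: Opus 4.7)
My plan has two main components: to obtain existence, uniqueness, and the norm bound \eqref{varisol:est-1} I will invoke the Riesz representation theorem in the Hilbert space $\Hsg$, whose inner product is exactly $\langle \cdot, \cdot \rangle_{s,\gamma}$; for the Kato-type inequality \eqref{kato1}, I will use a smooth convex regularization of the positive part combined with a pointwise algebraic inequality that transfers convexity from the regularization to the bilinear form.

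For the first component, the entire task reduces to showing that the linear functional $\xi \mapsto (f,\xi)_\gamma$ is bounded on $\Hsg$; once this is done, Riesz produces a unique $u$ and testing with $\xi = u$ in the variational identity gives $\|u\|_{s,\gamma}^2 = (f,u)_\gamma$, from which \eqref{varisol:est-1} drops out. To prove boundedness I would distribute the weight and apply H\"older's inequality with conjugate exponents $p$ and $p' = p/(p-1)$:
\begin{equation*}
\bigl| (f, \xi)_{\gamma} \bigr| = \left|\int_{\Omega} \bigl(f |x|^{\alpha/p}\bigr) \bigl(\xi |x|^{\gamma - \alpha/p} \bigr) dx \right| \leq \|f\|_{L^p(\Omega; |x|^\alpha)} \bigl\| |\cdot|^\gamma \xi \bigr\|_{L^{p'}(\Omega; |x|^{-\beta})},
\end{equation*}
with $\beta := \alpha/(p-1)$. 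The weighted Sobolev embedding in Theorem \ref{th:main-1}(iii) then controls the second factor by $c\|\xi\|_{s,\gamma}$ under the three conditions $\beta < 2s$, $p' < (2N-2\beta)/(N-2s)$, and $p' < 2N/(N-2s)$; elementary algebra rewrites these as $p > 1 + \alpha/(2s)$, $p > (2N+2\alpha)/(N+2s)$, and $p > 2N/(N+2s)$ respectively, which matches \eqref{exs ex-0} exactly.

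For the Kato inequality I would choose a smooth nondecreasing convex approximation $G_\varepsilon : \R \to \R$ of $t \mapsto t^+$ with $G_\varepsilon \equiv 0$ on $(-\infty, 0]$, $G_\varepsilon(t) \nearrow t^+$, $\|G_\varepsilon'\|_\infty \leq 1$, and $G_\varepsilon'(t) \to \1_{(0,\infty)}(t)$ pointwise. The key ingredient is the algebraic inequality, valid for any convex $G \in C^1$ and any nonnegative numbers $\xi(x), \xi(y)$,
\begin{equation*}
\bigl(u(x) - u(y)\bigr)\bigl(G'(u(x))\xi(x) - G'(u(y))\xi(y)\bigr) \geq \bigl(G(u(x)) - G(u(y))\bigr)\bigl(\xi(x) - \xi(y)\bigr),
\end{equation*}
a two-line consequence of the supporting-line bounds $G(b) \geq G(a) + G'(a)(b-a)$ (both terms have the right sign once $\xi \geq 0$). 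Integrating against $|x-y|^{-(N+2s)} |y|^\gamma |x|^\gamma\,dy\,dx$ yields $\langle G_\varepsilon(u), \xi \rangle_{s,\gamma} \leq \langle u, G_\varepsilon'(u)\xi \rangle_{s,\gamma}$, which equals $(f, G_\varepsilon'(u)\xi)_\gamma$ by testing the variational identity against $G_\varepsilon'(u)\xi$. Passing $\varepsilon \to 0^+$ via dominated convergence with majorants supplied by $|G_\varepsilon(u)| \leq |u|$ and $|G_\varepsilon'(u)| \leq 1$ then produces \eqref{kato1}.

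The main technical obstacle is ensuring that $G_\varepsilon'(u)\xi$ is admissible, i.e.\ belongs to $\Hsg$, since multiplication by $G_\varepsilon'(u)$ is not automatically continuous on this weighted fractional Sobolev space. The Leibniz-style splitting $G_\varepsilon'(u(x))\xi(x) - G_\varepsilon'(u(y))\xi(y) = G_\varepsilon'(u(x))(\xi(x) - \xi(y)) + \xi(y)(G_\varepsilon'(u(x)) - G_\varepsilon'(u(y)))$ controls the first piece by $\|\xi\|_{s,\gamma}$, but the second requires $\xi \in L^\infty$ because of the $|\xi(y)|^2 |u(x)-u(y)|^2$ factor appearing in the squared seminorm. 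I would circumvent this by first establishing \eqref{kato1} for $\xi \in C_0^\infty(\Omega \setminus \{0\})$, which are automatically bounded and dense in $\Hsg$ by Theorem \ref{th:main-1}(i), and then extending to arbitrary nonnegative $\xi \in \Hsg$ through a density argument combining Fatou's lemma on the left with continuity of the functional $\xi \mapsto (f\sign^+(u), \xi)_\gamma$ on the right.
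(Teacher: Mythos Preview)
The paper does not prove this theorem; it is quoted verbatim as background from \cite[Theorem 1.3]{CGN}, so there is no in-paper argument to compare against. The only hint about the original proof appears in the proof of Proposition~\ref{exist-semi-varsol}, which refers to ``proceeding similarly as in the proof of Theorem~\ref{th:main-2}'' by showing a functional is coercive and lower semicontinuous; for the linear problem this direct-method approach is equivalent to your Riesz representation.

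Your argument is correct. The H\"older splitting with $\beta = \alpha/(p-1)$ and the identification of the three constraints in Theorem~\ref{th:main-1}(iii) with the three terms in \eqref{exs ex-0} is exactly the right computation. For the Kato inequality, the convex-regularisation route and the pointwise inequality
\[
(a-b)\bigl(G'(a)p - G'(b)q\bigr) \ge \bigl(G(a)-G(b)\bigr)(p-q), \qquad p,q \ge 0,
\]
are standard and sound.

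One small refinement on the density step: rather than invoking Fatou, it is cleaner to note that once the inequality $\langle u^+,\xi\rangle_{s,\gamma} \le (f\sign^+(u),\xi)_\gamma$ is established for all $0 \le \xi \in \Hsg \cap L^\infty$ (which is what your Leibniz splitting actually requires, not smoothness per se), you can pass to general $0 \le \xi \in \Hsg$ by the truncations $\xi_k := \min(\xi,k)$. These are nonnegative, bounded, lie in $\Hsg$ by the $1$-Lipschitz bound $|\min(a,k)-\min(b,k)|\le|a-b|$, and converge to $\xi$ in $\Hsg$ by dominated convergence on the Gagliardo integrand; both sides of the inequality are continuous linear functionals of $\xi$, so the limit passes directly. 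This avoids the question of whether nonnegative $C_0^\infty$ functions are dense in the nonnegative cone.
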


It is easy to check that
for $\alpha<2s$,    \eqref{exs ex-0} could be reduced to
\bal p>\max\Big\{  \frac{2N}{N+2s},\, \frac{2N+2\alpha}{N+2s}\Big\}.
\eal


The Poisson problem involving $\CL_\mu^s$ of the form 
\ba\label{veryweaksolution} \left\{ \BAL
	\CL_\xm^s u&=f&&\quad\text{in}\;\xO, \\
	u&=0&&\quad\text{in}\;\BBR^N\setminus\xO,
	\EAL \right.
	\ea
where $f \in L^1(\xO;d^s|x|^{\tau_+})$, was studied in \cite{CGN} in which the authors introduced a notion of weak solutions in the following sense: A function $u$ is called a weak solution of \eqref{veryweaksolution}
 if for any $b <2s-\tau_+$, $u\in L^1(\xO;|x|^{-b})$  and $u$ satisfies
\bal 
	\int_\xO u(-\xD)^s_{\tau_+}\psi dx=\int_\xO f\psi |x|^{\tau_+} dx,\quad\forall\psi\in \mathbf{X}_\xm(\xO;|x|^{-b}).
\eal

The solvability for \eqref{veryweaksolution} and Kato type inequalities were established in \cite{CGN}.
\begin{theorem}[{\cite[Theorem 1.7]{CGN}}] \label{existence2}
 Assume  $f\in L^1(\xO;d(x)^s|x|^{\tau_+})$. Then problem \eqref{veryweaksolution} admits a unique weak solution $u$.
For any $b<2s-\tau_+$, there exists a positive constant $C=C(N,\Omega,s,\mu,b)$ such that 
	\bal
	\| u \|_{L^1(\Omega;|x|^{-b})} \leq C\| f \|_{L^1(\Omega;d(x)^s|x|^{\tau_+})}.
	\eal
	Furthermore, there holds
	\ba \label{Kato:+-1}
	\int_\xO u^+(-\xD)^s_{\tau_+}\psi dx\leq\int_\xO f\sign^+(u)\psi |x|^{\tau_+} dx,\quad\forall \, 0\leq\psi\in \mathbf{X}_\xm(\xO;|x|^{-b})
	\ea
	and
	\ba \label{Kato||-1}
	\int_\xO |u|(-\xD)^s_{\tau_+}\psi dx\leq\int_\xO f\sign(u)\psi |x|^{\tau_+} dx,\quad\forall \, 0\leq\psi\in \mathbf{X}_\xm(\xO;|x|^{-b}).
	\ea
	As a consequence, the mapping $f \mapsto u$ is nondecreasing. In particular, if $f \geq 0$ then $u \geq 0$ a.e. in $\Omega \setminus \{0\}$.
\end{theorem}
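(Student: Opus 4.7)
The overall strategy is approximation plus duality. First I would truncate $f$: set $f_n(x) := T_n(f(x))\,\mathbf{1}_{\{|x|>1/n\}}$ where $T_n$ is the standard truncation at level $n$. Then $f_n$ belongs to $L^p(\Omega;|x|^{\alpha})$ for parameters $(p,\alpha)$ satisfying \eqref{exs ex-0} (since $f_n\in L^\infty$ and is supported away from the origin), and $f_n\to f$ in $L^1(\Omega;d^s|x|^{\tau_+})$ by dominated convergence. By Theorem \ref{th:main-2} applied with $\gamma=\tau_+$, each $f_n$ yields a unique variational solution $u_n\in H_0^s(\Omega;|x|^{\tau_+})$. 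Next I would verify that a variational solution is also a weak solution in the sense of the distributional formulation with test functions in $\mathbf{X}_\mu(\Omega;|x|^{-b})$. Since by Definition \ref{def:weaksol}(i) any $\psi\in\mathbf{X}_\mu(\Omega;|x|^{-b})$ belongs to $H_0^s(\Omega;|x|^{\tau_+})$, one can test the variational identity against $\psi$; then properties (ii)--(iii) of $\mathbf{X}_\mu$ (pointwise $|x|^{-b}$-control of $(-\Delta)^s_{\tau_+}\psi$ and a local dominated-convergence condition) allow an integration by parts that converts the Dirichlet form into $\int_\Omega u_n\,(-\Delta)^s_{\tau_+}\psi\,dx$.

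The key estimate is the $L^1(\Omega;|x|^{-b})$-bound, which I would obtain by duality. Given $h\in C_c(\Omega\setminus\{0\})$ with $\|h\|_\infty\le 1$, let $\psi_h$ solve the adjoint Poisson problem $(-\Delta)^s_{\tau_+}\psi_h=h\,|x|^{-b}$ in $\Omega$ with zero exterior condition. One needs two facts: (a) $\psi_h\in\mathbf{X}_\mu(\Omega;|x|^{-b})$, which follows from the explicit $|x|^{-b}$ bound on the right-hand side together with elliptic regularity away from $\{0\}\cup\partial\Omega$, and (b) the pointwise bound $|\psi_h(x)|\le C\,d(x)^s$ uniformly in $h$, where $C=C(N,\Omega,s,\mu,b)$. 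Granting (a)--(b) and testing the weak formulation for $u_n$ against $\psi_h$ with $h=\mathrm{sign}(u_n)\varphi$ for $\varphi\in C_c(\Omega\setminus\{0\})$, $0\le\varphi\le 1$, gives
\[
\int_\Omega |u_n|\varphi\,|x|^{-b}\,dx =\int_\Omega f_n\,\psi_h\,|x|^{\tau_+}\,dx\le C\,\|f_n\|_{L^1(\Omega;d^s|x|^{\tau_+})},
\]
and letting $\varphi\uparrow\mathbf{1}_\Omega$ yields the announced bound. Applied to $u_n-u_m$ (which solves the same problem with datum $f_n-f_m$), this estimate makes $\{u_n\}$ Cauchy in $L^1(\Omega;|x|^{-b})$; passing to the limit in the weak formulation (the test functions being fixed and having $(-\Delta)^s_{\tau_+}\psi$ with the right growth, so that the $f_n\psi|x|^{\tau_+}$-integrals converge by dominated convergence) produces a weak solution $u$ with the desired $L^1$-estimate.

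For the Kato inequalities, I would start from the variational Kato inequality \eqref{kato1} for $u_n$, which applies to any nonnegative test function in $H_0^s(\Omega;|x|^{\tau_+})$ and therefore, by the inclusion above, to any nonnegative $\psi\in\mathbf{X}_\mu(\Omega;|x|^{-b})$. After the variational-to-weak conversion this reads
\[
\int_\Omega u_n^+\,(-\Delta)^s_{\tau_+}\psi\,dx\le \int_\Omega f_n\,\mathrm{sign}^+(u_n)\,\psi\,|x|^{\tau_+}\,dx.
\]
Upgrading to the limit requires a.e.\ convergence of $u_n\to u$ (obtained along a subsequence from the $L^1$ bound) together with dominated convergence for the right-hand side, justified by $|f_n\mathrm{sign}^+(u_n)\psi|\,|x|^{\tau_+}\le |f|\|\psi/d^s\|_\infty d^s|x|^{\tau_+}\in L^1$. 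The inequality \eqref{Kato||-1} is obtained by the same argument applied to $|u_n|$ in place of $u_n^+$. Uniqueness, monotonicity of $f\mapsto u$, and positivity for $f\ge 0$ are then immediate corollaries of \eqref{Kato||-1} with $f$ replaced by a difference.

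The principal obstacle is the pointwise bound $|\psi_h(x)|\le C\,d(x)^s$ for the adjoint Poisson problem with right-hand side of size $|x|^{-b}$: this is where the weighted operator $(-\Delta)^s_{\tau_+}$ truly departs from the classical fractional Laplacian, because both the origin (through the weight $|y|^{\tau_+}$) and the boundary must be handled simultaneously. One would typically build this bound by constructing a suitable supersolution of the form $C(d(x)^s\wedge 1)$ or $C\,d(x)^s|x|^{\tau_+-\tau_+}$ multiplied by a power of $|x|$, verifying that its image under $(-\Delta)^s_{\tau_+}$ dominates $|x|^{-b}$ near the origin (using $b<2s-\tau_+$) and remains bounded away, and invoking the weak maximum principle for $(-\Delta)^s_{\tau_+}$; verification of the membership $\psi_h\in\mathbf{X}_\mu(\Omega;|x|^{-b})$ (property (iii) in particular) is a delicate but purely local computation near $\{0\}$.
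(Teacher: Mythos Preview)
The paper does not prove this theorem at all: it is quoted verbatim from \cite[Theorem~1.7]{CGN} and used as a black box throughout, so there is no ``paper's own proof'' to compare against. That said, your outline is consistent with the way the result is invoked later in the paper---in particular the test function $\xi_b$ solving \eqref{xib} with the bound $|\xi_b|\le C\,d^s$ (attributed to \cite[estimate~(1.16)]{CGN}) is exactly the adjoint object you call $\psi_h$ for $h\equiv 1$, and the authors rely on precisely the properties (a)--(b) you isolate.

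Two points in your sketch deserve more care. First, the ``integration by parts'' converting $\langle u_n,\psi\rangle_{s,\tau_+}$ into $\int_\Omega u_n\,(-\Delta)^s_{\tau_+}\psi\,dx$ is not automatic: $(-\Delta)^s_{\tau_+}$ is defined as a principal-value integral with the weight $|y|^{\tau_+}$, and the identity $\langle v,\psi\rangle_{s,\tau_+}=\int v\,(-\Delta)^s_{\tau_+}\psi\,|x|^{\tau_+}dx$ for $v\in H_0^s(\Omega;|x|^{\tau_+})$ and $\psi\in\mathbf{X}_\mu$ requires a separate argument (symmetrisation of the kernel plus the dominated-convergence hypothesis (iii) in Definition~\ref{def:weaksol}); you should not present this as a triviality. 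Second, in your duality step you take $h=\operatorname{sign}(u_n)\varphi$, which is not in $C_c(\Omega\setminus\{0\})$; either work instead with a fixed $h\in L^\infty$ and use that the adjoint solution map $h\mapsto\psi_h$ is well-defined for bounded data, or approximate $\operatorname{sign}(u_n)$ by smooth functions. Similarly, in the limit for the Kato inequality, $f_n\operatorname{sign}^+(u_n)\to f\operatorname{sign}^+(u)$ a.e.\ can fail on $\{u=0\}$; the standard fix is to observe that on that set the limiting right-hand side is $\ge 0$ while the left-hand side vanishes, so the inequality survives regardless.
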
	

Note that (\ref{Kato:+-1}) is the Kato's inequality in our weighted distributional sense, which plays a pivotal role in the proof of the
uniqueness for weak solution for our problems involving the Radon measures.

In case of measure data, weak solutions of 
	\ba\label{veryweaksolutionmesure} \left\{ \BAL
	\CL_\xm^s u&=\xn + \ell \delta_0&&\quad\text{in}\;\xO,\\
	u&=0&&\quad\text{in}\;\BBR^N\setminus\xO,
	\EAL \right.
	\ea
are understood in the sense of Definition \ref{sol:semieq-Dirac-sum} with $g\equiv 0$. Existence, uniqueness and a priori estimates of weak solutions are recalled below.
 
\begin{theorem}[{\cite[Theorem 1.10]{CGN}}] \label{existence3-dirac}
Assume $\ell \in \R$ and $\xn\in\mathfrak{M}(\Omega\setminus\{0\};|x|^{\tau_+})$. Then problem \eqref{veryweaksolutionmesure} admits a unique weak solution $u$. For any $b < 2s-\tau_+$, there exists a positive constant $C=C(N,\Omega,s,\mu,b)$ such that
\bal 
	\| u \|_{L^1(\Omega;|x|^{-b})} \leq C\bigg(\| \nu \|_{\GTM(\Omega \setminus \{0\};|x|^{\tau_+} )}+\ell\bigg).
\eal
	Moreover, the mapping $(\nu,\ell) \mapsto u$ is nondecreasing. In particular, if $\nu \geq 0$ and $\ell \geq 0$ then $u \geq 0$ a.e. in $\Omega \setminus \{0\}$.
\end{theorem}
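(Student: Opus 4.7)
My plan is to exploit linearity to split $u = u_D + u_R$, where $u_D$ carries the Dirac mass $\ell\delta_0$ and $u_R$ carries the regular part $\nu$; handle $u_D$ explicitly via $\Phi_{s,\mu}^\Omega$ and $u_R$ by monotone approximation; and reduce uniqueness to the Kato inequality supplied by Theorem~\ref{existence2}. For uniqueness, if $u_1,u_2$ are two weak solutions of \eqref{veryweaksolutionmesure}, their difference $w=u_1-u_2$ lies in $L^1(\Omega;|x|^{-b})$ for every $b<2s-\tau_+$ and satisfies $\int_\Omega w(-\Delta)^s_{\tau_+}\psi\,dx=0$ for all $\psi\in\mathbf{X}_\mu(\Omega;|x|^{-b})$. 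This is precisely the weak formulation of the Poisson problem \eqref{veryweaksolution} with $f\equiv 0\in L^1(\Omega;d^s|x|^{\tau_+})$, which has only the trivial solution by Theorem~\ref{existence2}; equivalently, \eqref{Kato||-1} applied to $w$ forces $|w|\equiv 0$ a.e.

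For the Dirac piece I would set $u_D:=\ell\,\Phi_{s,\mu}^\Omega$. The pointwise bound $\Phi_{s,\mu}^\Omega(x)\le c|x|^{\tau_-}$ from \eqref{PhiOmega} secures $u_D\in L^1(\Omega;|x|^{-b})$ for every $b<2s-\tau_+$ (the threshold is precisely calibrated to the homogeneity $\tau_-$ of the fundamental solution). The characterizing identity $\int_\Omega\Phi_{s,\mu}^\Omega(-\Delta)^s_{\tau_+}\psi\,dx=c_{s,\mu}\psi(0)$ recorded in the excerpt is stated for $\psi\in C_0^{1,1}(\Omega)$, and I would extend it to $\psi\in\mathbf{X}_\mu(\Omega;|x|^{-b})$ by a smooth cutoff/density argument, dominating throughout with the uniform bound $|(-\Delta)^s_{\tau_+}\psi(x)|\le C|x|^{-b}$ furnished by Definition~\ref{def:weaksol}. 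Matching with Definition~\ref{sol:semieq-Dirac-sum} taken with $g\equiv 0$, $\nu\equiv 0$, and source $\ell\delta_0$ then confirms $u_D$ is a weak solution; nonnegativity of $\Phi_{s,\mu}^\Omega$ yields the sign preservation in $\ell$.

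For the regular component I would reduce to $\nu\ge 0$ by Jordan decomposition. Choosing the exhausting regions $A_n:=\{x\in\Omega:d(x)\wedge|x|\ge 1/n\}$ and convolving $\mathbf{1}_{A_n}\nu$ with a mollifier of radius much less than $1/n$ produces a monotone family $\{f_n\}\subset C_c^\infty(\Omega\setminus\{0\})$ with $f_n|x|^{\tau_+}\,dx \rightharpoonup |x|^{\tau_+}\,d\nu$ and uniform bound $\|f_n\|_{L^1(\Omega;d^s|x|^{\tau_+})}\le C\|\nu\|_{\GTM(\Omega\setminus\{0\};|x|^{\tau_+})}$ (using boundedness of $d^s$ on $\Omega$). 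Theorem~\ref{existence2} provides unique weak solutions $v_n$ with $\|v_n\|_{L^1(\Omega;|x|^{-b})}\le C\|\nu\|_{\GTM(\Omega\setminus\{0\};|x|^{\tau_+})}$, and its monotonicity statement lets me arrange $v_n\uparrow u_R$ with $u_R\in L^1(\Omega;|x|^{-b})$. Dominated convergence (dominator $u_R\cdot C|x|^{-b}\in L^1$) then passes the left-hand side of the weak formulation to the limit. Summing $u=u_D+u_R$ and taking limits in the corresponding bounds produces both existence and the claimed $L^1$ estimate; the monotonicity $(\nu,\ell)\mapsto u$ is inherited from $\Phi_{s,\mu}^\Omega\ge 0$ and from Theorem~\ref{existence2}'s sign preservation transferred to the limit.

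The main obstacle I anticipate is passing the right-hand side $\int_\Omega f_n\psi|x|^{\tau_+}\,dx\to\int_{\Omega\setminus\{0\}}\psi|x|^{\tau_+}\,d\nu$ against a test function $\psi\in\mathbf{X}_\mu(\Omega;|x|^{-b})$ that is neither continuous on $\Omega$ nor compactly supported. Because $\nu$ carries no mass at $\{0\}$ and each $f_n$ is concentrated inside $A_n\subset\Omega\setminus\{0\}$, I would split the integrals into a small ball $B_\varepsilon(0)$ and its complement: on the complement the pairing follows from weak-$\ast$ convergence against $\psi|x|^{\tau_+}$, which is bounded and locally well-behaved off the origin; near the origin the contribution is controlled uniformly in $n$ by $\|\nu\|_{\GTM(\Omega\setminus\{0\};|x|^{\tau_+})}$ combined with the controlled growth of $\psi$ inherited from Definition~\ref{def:weaksol}. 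A careful choice of mollifier supported strictly inside $A_n$, together with the uniqueness already proved, allows a diagonal argument to patch together the approximation if stronger convergence is needed in one of the regimes.
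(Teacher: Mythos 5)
This statement is not proved in the paper at all: it is recalled verbatim from the companion work \cite[Theorem 1.10]{CGN}, so there is no internal proof to compare yours against; what follows is an assessment of your reconstruction on its own terms. Your skeleton (split $u=u_D+u_R$, treat the Dirac part via $\Phi_{s,\mu}^\Omega$, approximate $\nu$, get uniqueness from Theorem \ref{existence2}) is the natural one. The uniqueness step is correct: the difference of two weak solutions is a weak solution of \eqref{veryweaksolution} with $f\equiv0$, hence vanishes. The Dirac part is even easier than you make it: in Definition \ref{sol:semieq-Dirac-sum} the contribution of $\ell\delta_0$ is \emph{already written} as $\ell\int_\Omega\Phi_{s,\xm}^\xO(-\xD)^s_{\tau_+}\psi\,dx$, so $u_D=\ell\Phi_{s,\mu}^\Omega$ satisfies the weak formulation identically, and $\Phi_{s,\mu}^\Omega\le c|x|^{\tau_-}$ gives $u_D\in L^1(\Omega;|x|^{-b})$ for $b<2s-\tau_+$ since $\tau_-+\tau_+=2s-N$. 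Your proposed extension of the identity $\int_\Omega\Phi_{s,\xm}^\xO(-\xD)^s_{\tau_+}\psi\,dx=c_{s,\xm}\psi(0)$ to all $\psi\in\mathbf{X}_\xm(\xO;|x|^{-b})$ is unnecessary, and proving it would itself require pointwise information on $\psi$ at the origin that Definition \ref{def:weaksol} does not provide.

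The regular part contains the genuine gaps. First, the family $f_n=\zeta_{\delta_n}\ast(\mathbf{1}_{A_n}\nu)$ is in general \emph{not} monotone in $n$ (mollifying a singular measure at different scales destroys the order coming from $\mathbf{1}_{A_n}\nu\uparrow\nu$: take $\nu$ a Dirac mass away from the origin), so the monotone-convergence extraction $v_n\uparrow u_R$ does not get off the ground; nor can you run a Cauchy argument in $L^1(\Omega;d^s|x|^{\tau_+})$ on the data, since $\|\zeta_\delta\ast\nu-\zeta_{\delta'}\ast\nu\|_{L^1}$ does not tend to zero for singular $\nu$. Some compactness mechanism is required (for instance uniform weak-Lebesgue bounds as in Proposition \ref{Marcin-3}, or local $W^{s,2}$-estimates), and this is exactly the nontrivial content that the present paper itself defers to \cite{CGN} (``in view of the proof of \cite[Theorem 1.7]{CGN}, $u_\delta\to u$ a.e.\ and in $L^1$''); the $L^1$ a priori bound alone yields no convergent subsequence. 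Second, the passage $\int_\Omega f_n\psi|x|^{\tau_+}dx\to\int_{\Omega\setminus\{0\}}\psi|x|^{\tau_+}d\nu$ needs $\psi$ to be continuous on a neighborhood of the (compact, origin-avoiding) supports, whereas Definition \ref{def:weaksol} only gives $\psi\in H_0^s(\Omega;|x|^{\tau_+})$ with $|x|^b(-\xD)^s_{\tau_+}\psi$ bounded; interior continuity must be extracted from regularity results of the type \cite[Lemmas 4.4--4.5]{CGN}, which you assert (``locally well-behaved off the origin'') but do not establish. Once these two points are supplied, the a priori bound, the monotonicity of $(\nu,\ell)\mapsto u$ (mollify comparable measures with a common mollifier to preserve order, then pass to the limit), and the sign statement follow as you indicate.
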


 \subsection{Local behavior at the origin}
Our aim in this subsection is to establish the local behavior of the solution to the  Poisson problem \eqref{veryweaksolutionmesure} when the measure has the support away from the origin. More precisely, the main result of this subsection is stated below. 

 \begin{theorem}\label{est1}
 Assume  $\xO\subset\BBR^N$ is a bounded domain satisfying the exterior ball condition and containing the origin, $\xm_0\leq\xm\leq0$ and $\xn\in\mathfrak{M}(\Omega\setminus\{0\};|x|^{\tau_+})$ with $\dist(\supp|\xn|,\{0\})=r>0$. Let $u$ be the unique solution of \eqref{veryweaksolutionmesure} with $\ell=0$. Then there exists a positive constant $C=C(N,\Omega,s,\mu,r)$ such that
 \ba \label{supu|x|}
\sup_{x\in B_{\frac{r}{4}}(0) \setminus \{0\}}(|x|^{-\tau_+}|u(x)|)\leq C \| \nu \|_{\GTM(\Omega \setminus \{0\};|x|^{\tau_+})}.
\ea
\end{theorem}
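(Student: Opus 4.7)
The plan is to combine the fact that $u$ is $\mathcal{L}^s_\mu$-harmonic inside $B_r(0)$ with a local boundedness estimate for a suitable rescaling of $u$, in the spirit of \cite{DKP,KMS}. First, the linearity and monotonicity in Theorem~\ref{existence3-dirac} let me decompose $\nu = \nu^+ - \nu^-$ and split $u = u^+ - u^-$ with $u^{\pm}\ge 0$ a.e., so that $|u|\le u^++u^-$ and it is enough to prove \eqref{supu|x|} when $\nu\ge 0$ (hence $u\ge 0$). Since $\supp \nu \subset \Omega\setminus\overline{B_r(0)}$, the weak formulation in Theorem~\ref{existence3-dirac} gives
\[
\int_\Omega u(x)\,(-\Delta)^s_{\tau_+}\psi(x)\,dx = 0 \qquad \forall\,\psi\in\mathbf{X}_\mu(\Omega;|x|^{-b})\ \text{with}\ \supp \psi \subset B_r(0),
\]
that is, $u$ is $\mathcal{L}^s_\mu$-harmonic in $B_r(0)$.

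Next, I would introduce the transformation $v(x):=|x|^{-\tau_+}u(x)$ (cf.\ Theorem~\ref{th:main-1}(ii)), which reinterprets the local harmonicity of $u$ as a homogeneous equation for $v$ driven by the weighted fractional Laplacian $(-\Delta)^s_{\tau_+}$, now free of the Hardy singularity, on $B_r(0)$. Adapting the nonlocal De Giorgi--Moser iteration of \cite{DKP} and the level-set / Riesz potential techniques of \cite{KMS} to this weighted framework, one derives a local boundedness estimate of the form
\[
\sup_{B_{r/4}(0)} v \le C\Big(\|v\|_{L^1(B_{r/2}(0);|y|^{\tau_+})} + \mathrm{Tail}_{\tau_+}(v;0,r/2)\Big),
\]
where the weighted nonlocal tail is
\[
\mathrm{Tail}_{\tau_+}(v;0,r/2) := (r/2)^{2s}\int_{\R^N\setminus B_{r/2}(0)} \frac{v(y)\,|y|^{\tau_+}}{|y|^{N+2s}}\,dy.
\]
Both terms on the right-hand side are dominated by $\|u\|_{L^1(\Omega;|x|^{-b})}$ for a suitable $b<2s-\tau_+$, which in turn is controlled by $\|\nu\|_{\GTM(\Omega\setminus\{0\};|x|^{\tau_+})}$ thanks to the a priori estimate of Theorem~\ref{existence3-dirac}; converting back through $u=|x|^{\tau_+}v$ then yields \eqref{supu|x|}.

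The hard part is executing this weighted De Giorgi--Moser iteration rigorously. Producing the right Caccioppoli inequality for $(-\Delta)^s_{\tau_+}$ calls for admissible cutoff functions lying in $\mathbf{X}_\mu(\Omega;|x|^{-b})$; controlling the tail of $v$ by $\|u\|_{L^1(\Omega;|x|^{-b})}$ needs a delicate matching between the $|y|^{-b}$ weight of the ambient space and the kernel decay $|y|^{\tau_+}/|y|^{N+2s}$; and since $\mu$ is allowed to equal $\mu_0$, so that $\tau_+=-(N-2s)/2$ sits on the boundary of the admissible exponent range for the weighted Sobolev space, the constants must remain uniform up to this borderline case. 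Once these technical obstacles are overcome, the pointwise bound on $v$, and thus \eqref{supu|x|} for $u$, follows at once.
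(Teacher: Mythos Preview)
Your overall architecture matches the paper's: pass to $v=|x|^{-\tau_+}u$, observe it solves the homogeneous weighted equation $(-\Delta)^s_{\tau_+}v=0$ in $B_r(0)$, establish a DKP/KMS-type local boundedness estimate of the form $\sup_{B_{r/4}}v\le C(\text{local }L^1\text{-average}+\text{weighted tail})$, and dominate both pieces by $\|u\|_{L^1(\Omega;|x|^{-b})}\le C\|\nu\|_{\GTM(\Omega\setminus\{0\};|x|^{\tau_+})}$ via Theorem~\ref{existence3-dirac}. The paper does exactly this, with the local boundedness statement appearing as Lemma~\ref{estimateorigin}.

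The genuine gap in your outline is the regularity needed to launch the iteration. The weighted Caccioppoli inequality and the subsequent De~Giorgi steps require $v\in H^s_0(\Omega;|x|^{\tau_+})$ so that one may test with $w^+\phi^2$; but a weak solution $u$ with measure data is only known to lie in $L^1(\Omega;|x|^{-b})$, and the admissible test functions in that formulation are the rigid space $\mathbf{X}_\mu(\Omega;|x|^{-b})$, which does not contain truncations of $v$ times cutoffs. Choosing cleverer cutoffs inside $\mathbf{X}_\mu$ does not fix this. The paper resolves the issue by mollifying the data: set $\nu_{i,\delta}=\zeta_\delta*\nu^\pm\in C_0^\infty(\Omega\setminus\{0\})$ and let $u_\delta$ be the corresponding variational solution, so that $v_\delta=|x|^{-\tau_+}u_\delta$ genuinely belongs to $H^s_0(\Omega;|x|^{\tau_+})$ and satisfies $\langle v_\delta,\phi\rangle_{s,\tau_+}=0$ for $\phi\in C_0^\infty(B_{r/2})$. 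Lemma~\ref{estimateorigin} then applies to $v_\delta^\pm$ separately, gives $\sup_{B_{r/4}}|v_\delta|\le C\int_\Omega |v_\delta||x|^{\tau_+}\,dx\le C\int_\Omega(\nu_{1,\delta}+\nu_{2,\delta})|x|^{\tau_+}\,dx$, and one lets $\delta\to0$ using $v_\delta\to v$ a.e.\ to conclude. A minor second point: your reduction ``split $u=u^+-u^-$'' is tautological and does not reduce to $\nu\ge0$; what you need is $u=u_1-u_2$ with $u_i\ge0$ the solutions to the linear problems with data $\nu^+,\nu^-$, giving $|u|\le u_1+u_2$. The paper equivalently treats $v_\delta^+$ and $v_\delta^-$ as separate subsolutions.
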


The proof of Theorem \ref{est1} consists of several intermediate technical lemmas which allow us to bound local subsolutions to $\CL_\mu^s u \leq 0$ from above in terms of  their tails and mass in neighborhood of the origin. 

The first lemma, which is inspired by \cite[Theorem 1.4]{DKP}, is the following.

\begin{lemma}
Let $x_0\in \xO$ and $r>0$ such that $B_r(x_0)\subset\xO.$ We assume that $v\in H^{s}_0(\xO;|x|^{\tau_+})$ satisfies
\ba\label{subsolution}
\int_{\BBR^N}\int_{\BBR^N}\frac{(v(x)-v(y))(\xf(x)-\xf(y))}{|x-y|^{N+2s}}|y|^{\tau_+}dy |x|^{\tau_+}dx
\leq0, \quad \forall 0 \leq \xf\in C_0^\infty(B_r(x_0)).
\ea
For any $k \in \R$, put $w=v-k$. Then for any nonnegative $\xf\in C_0^\infty(B_r(x_0))$, there holds
\ba\label{cacc}\BAL
&\quad \int_{B_r(x_0)}\int_{B_r(x_0)}\frac{\big(w^+(x)\xf(x)-w^+(y)\xf(y)\big)^2}{|x-y|^{N+2s}}|y|^{\tau_+}dy |x|^{\tau_+}dx\\
&\leq
6 \int_{B_r(x_0)}\int_{B_r(x_0)}\max\{w^+(x)^2,w^+(y)^2\}\frac{(\xf(x)-\xf(y))^2}{|x-y|^{N+2s}}|y|^{\tau_+}dy |x|^{\tau_+}dx\\
&\quad +8\int_{B_r(x_0)}w^+(x)\xf(x)^2|x|^{\tau_+} dx\Big(\sup_{y\in\supp\xf}\int_{\BBR^N\setminus B_r(x_0)}\frac{w^+(x)}{|x-y|^{N+2s}}|x|^{\tau_+} dx\Big).
\EAL
\ea
\end{lemma}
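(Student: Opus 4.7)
The plan is to mimic the classical Caccioppoli argument for fractional subsolutions (cf. \cite{DKP}), but carried out in the weighted bilinear form determined by the measure $|x|^{\tau_+}dx$. After a standard density/truncation step, I would insert the test function
\[
\xf^2 w^+ \;=\; \xf^2(v-k)^+,
\]
into \eqref{subsolution}. Since $\xf\in C_0^\infty(B_r(x_0))$ and $v\in H_0^s(\xO;|x|^{\tau_+})$, this product belongs to $H_0^s(\xO;|x|^{\tau_+})$ and is a legitimate nonnegative test function (approximating by $\min\{w^+,n\}\xf^2$ and letting $n\to+\ity$ if necessary). Thus
\[
\int_{\BBR^N}\!\!\int_{\BBR^N}\frac{(w(x)-w(y))(w^+(x)\xf(x)^2-w^+(y)\xf(y)^2)}{|x-y|^{N+2s}}|y|^{\tau_+}|x|^{\tau_+}dy\,dx\leq 0,
\]
because $v(x)-v(y)=w(x)-w(y)$.

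The next step is the pointwise algebraic inequality that drives the whole argument: for every $x,y\in\BBR^N$,
\[
(w(x)-w(y))\bigl(w^+(x)\xf(x)^2-w^+(y)\xf(y)^2\bigr)
\;\geq\;
\bigl(w^+(x)\xf(x)-w^+(y)\xf(y)\bigr)^2 - 3\max\{w^+(x)^2,w^+(y)^2\}(\xf(x)-\xf(y))^2.
\]
This is a standard identity-plus-Young-inequality computation, split into the four cases determined by the signs of $w(x)$ and $w(y)$; the only subtle case is when the signs differ, where one uses $(w(x)-w(y))w^+(x) \geq w^+(x)^2$ when $w(x)\geq 0 > w(y)$. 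I would then split the double integral into the interior piece on $B_r(x_0)\times B_r(x_0)$ and the two exterior pieces $B_r(x_0)\times(\BBR^N\setminus B_r(x_0))$ and its symmetric counterpart.

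For the interior piece, applying the pointwise inequality together with the bound derived from \eqref{subsolution} yields the first term on the right-hand side of \eqref{cacc} (with the constant $6$ coming from $2\cdot 3$ after symmetrization). For the exterior piece, since $\xf$ is supported in $B_r(x_0)$, in the integrand either $\xf(x)=0$ or $\xf(y)=0$. A short computation then gives
\[
\int_{B_r(x_0)}\int_{\BBR^N\setminus B_r(x_0)}\frac{w(x)w^+(y)\xf(y)^2 - w^-(x)w^+(y)\xf(y)^2 + \cdots}{|x-y|^{N+2s}}|y|^{\tau_+}|x|^{\tau_+}dy\,dx,
\]
which, after dropping the favorable sign of $-w^-(x)w^+(y)\xf(y)^2$ and using
\[
\sup_{y\in\supp\xf}\int_{\BBR^N\setminus B_r(x_0)}\frac{w^+(x)}{|x-y|^{N+2s}}|x|^{\tau_+}dx
\]
to pull the $x$-integral out, produces exactly the tail term $8\int w^+\xf^2|x|^{\tau_+}\cdot(\sup\text{-integral})$ stated in \eqref{cacc}. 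Combining the interior and exterior contributions completes the proof.

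The main obstacle is the bookkeeping: tracking the constants $6$ and $8$ through the case analysis and the symmetrization $(x,y)\leftrightarrow(y,x)$, and justifying the use of $\xf^2 w^+$ as a test function in the weighted space. The weights $|x|^{\tau_+}|y|^{\tau_+}$ themselves create no additional difficulty since they are locally bounded away from the origin (and integrable even near $0$ if $B_r(x_0)$ contains it), so the pointwise inequality integrates term by term against $|y|^{\tau_+}|x|^{\tau_+}dy\,dx$ without modification.
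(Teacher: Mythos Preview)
Your approach is correct and essentially matches the paper's: test with $w^+\xf^2$, split the $\BBR^N\times\BBR^N$ integral into the interior $B_r\times B_r$ piece and the exterior pieces, use $(w(y)-w(x))w^+(y)\geq -w^+(x)w^+(y)$ on the exterior to produce the tail term, and use a pointwise algebraic inequality on the interior. The paper reaches the interior estimate in two steps (first bounding $(w^+(x)-w^+(y))(w^+(x)\xf(x)^2-w^+(y)\xf(y)^2)$ below by $\tfrac12(w^+(x)-w^+(y))^2\max\{\xf(x)^2,\xf(y)^2\}-2\max\{w^+(x)^2,w^+(y)^2\}(\xf(x)-\xf(y))^2$, then bounding $(w^+\xf-w^+\xf)^2$ above), which is where the factors $4$ and $6$ arise, giving $8=4\cdot 2$ for the tail; your single pointwise inequality is a legitimate shortcut and in fact holds with constant $1$ (hence certainly $3$), so your route actually yields the sharper constants $3$ and $2$ rather than $6$ and $8$---your explanation ``$6=2\cdot3$ after symmetrization'' is therefore not how the constants arise, but since smaller constants imply \eqref{cacc} a fortiori this is harmless.
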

\begin{proof}[\textbf{Proof}]
Let $\phi \in C_0^\infty(B_r(x_0))$ and due to the standard density argument, we can take $w^+\xf^2$ as a test function in \eqref{subsolution} to obtain
\ba \label{test-wphi-1} \BAL
0&\geq \int_{\BBR^N}\int_{\BBR^N}\frac{(v(x)-v(y))(w^+(x)\xf(x)^2-w^+(y)\xf(y)^2)}{|x-y|^{N+2s}}|y|^{\tau_+}dy |x|^{\tau_+}dx\\
&=\int_{B_r(x_0)}\int_{B_r(x_0)}\frac{(v(x)-v(y))(w^+(x)\xf(x)^2-w^+(y)\xf(y)^2)}{|x-y|^{N+2s}}|y|^{\tau_+}dy |x|^{\tau_+}dx\\
&\quad +2\int_{\BBR^N\setminus B_r(x_0)}\int_{B_r(x_0)}\frac{(v(y)-v(x))w^+(y)\xf(y)^2}{|x-y|^{N+2s}}|y|^{\tau_+}dy |x|^{\tau_+}dx.
\EAL \ea
We note that  $(v(y)-v(x))w^+(y) \geq -w^+(x)w^+(y)$ for any $x,y \in \R^N$,  hence
\ba\label{27}\BAL
&\quad \int_{\BBR^N\setminus B_r(x_0)}\int_{B_r(x_0)}\frac{(v(y)-v(x))w^+(y)\xf(y)^2}{|x-y|^{N+2s}}|y|^{\tau_+}dy |x|^{\tau_+}dx\\
&\geq -\int_{\BBR^N\setminus B_r(x_0)}\int_{B_r(x_0)}\frac{w^+(x)w^+(y)\xf(y)^2}{|x-y|^{N+2s}}|y|^{\tau_+}dy |x|^{\tau_+}dx\\
&=-\int_{B_r(x_0)}w^+(y)\xf(y)^2\int_{\BBR^N\setminus B_r(x_0)}\frac{w^+(x)}{|x-y|^{N+2s}}|x|^{\tau_+}dx|y|^{\tau_+}dy\\
&\geq-\int_{B_r(x_0)}w^+(y)\xf(y)^2dy \Big(\sup_{y\in\supp\xf}\int_{\BBR^N\setminus B_r(x_0)}\frac{w^+(x)}{|x-y|^{N+2s}}|x|^{\tau_+} dx\Big).
\EAL
\ea
Next, for any $x,y \in \R^N$, we can show that
\ba \label{27-aa}
(v(x)-v(y))(w^+(x)\xf(x)^2-w^+(y)\xf(y)^2)\geq (w^+(x)-w^+(y))(w^+(x)\xf(x)^2-w^+(y)\xf(y)^2).
\ea
Combining \eqref{test-wphi-1}--\eqref{27-aa} yields
\ba \label{27-a} \BAL
&\int_{B_r(x_0)}\int_{B_r(x_0)}\frac{(w^+(x)-w^+(y))(w^+(x)\xf(x)^2-w^+(y)\xf(y)^2)}{|x-y|^{N+2s}}|y|^{\tau_+}dy |x|^{\tau_+}dx \\ \leq& 2\int_{B_r(x_0)}w^+(y)\xf(y)^2dy\Big(\sup_{y\in\supp\xf}\int_{\BBR^N\setminus B_r(x_0)}\frac{w^+(x)}{|x-y|^{N+2s}}|x|^{\tau_+} dx\Big).
\EAL \ea

Take arbitrary $x,y \in \R^N$. If $\xf(x)\leq\xf(y)$ then we obtain
\bal\BAL
&(w^+(x) -w^+(y))(w^+(x)\xf(x)^2-w^+(y)\xf(y)^2)\\[1mm]
= &(w^+(x)-w^+(y))^2\xf(y)^2+w^+(x)(w^+(x)-w^+(y))(\xf(x)^2-\xf(y)^2)\\
\geq &\frac{1}{2}(w^+(x)-w^+(y))^2\xf(y)^2-2(w^+(x))^2(\xf(x)-\xf(y))^2.
\EAL\eal
If $\xf(x)\geq\xf(y)$ and $w^+(x)\geq w^+(y)$ then
\bal
(w^+(x)&-w^+(y))(w^+(x)\xf(x)^2-w^+(y)\xf(y)^2)\geq (w^+(x)-w^+(y))^2\xf(x)^2.
\eal
Hence, in any case we have that
\bal
(w^+(x) -w^+(y))(w^+(x)\xf(x)^2-w^+(y)\xf(y)^2) &\geq \frac{1}{2}(w^+(x)-w^+(y))^2\max\{\xf(x)^2,\xf(y)^2\}\\[1mm]&\quad -2\max\{(w^+(x))^2,(w^+(y))^2\}(\xf(x)-\xf(y))^2.
\eal
On the other hand, we find that
\bal
(w^+(x)\xf(x) -w^+(y)\xf(y))^2 &\leq 2(w^+(x)-w^+(y))^2\max\{\xf(x)^2,\xf(y)^2\}
\\[1mm]&\quad +2\max\{(w^+(x))^2,(w^+(y))^2\}(\xf(x)-\xf(y))^2.
\eal
Combining the last two displays leads to
\ba\label{26}\BAL
 (w^+(x)\xf(x)-w^+(y)\xf(y))^2
&\leq 4(w^+(x)-w^+(y))(w^+(x)\xf(x)^2-w^+(y)\xf(y)^2)
\\[1mm]&\quad + 6\max\{(w^+)^2(x),(w^+)^2(y)\}(\xf(x)-\xf(y))^2.
\EAL
\ea

Finally, plugging \eqref{27-a} into \eqref{26}, we derive \eqref{cacc}.
\end{proof}

In order to proceed further, we recall that if $\xm_0<\xm$ then for $u\in C^\infty_0(\xO)$
\ba  \label{subcritsobolev0} \BAL
\frac{C_{N,s}}{2}\int_{\BBR^N}\int_{\BBR^N}\frac{|u(x)-u(y)|^2}{|x-y|^{N+2s}}dydx+\xm \int_{\BBR^N}\frac{|u|^2}{|x|^{2s}}dx &\geq\frac{C_{N,s}}{2}\frac{\xm_0-\xm}{\xm_0}\int_{\BBR^N}\int_{\BBR^N}\frac{|u(x)-u(y)|^2}{|x-y|^{N+2s}}dydx\\
&\geq c \left(\int_{\xO}|u|^\frac{2N}{N-2s}dx\right)^{\frac{N-2s}{N}},
\EAL
\ea
where $c=c(N,s,\xm)>0$. Setting $u=|x|^{\tau_+}v$, for any $\xm>\xm_0$, we have that for $u\in C^\infty_0(\xO)$
\ba \label{subcritsobolev1}
\BAL
\frac{C_{N,s}}{2}\int_{\BBR^N}\int_{\BBR^N}\frac{|v(x)-v(y)|^2}{|x-y|^{N+2s}}|y|^{\tau_+}dy |x|^{\tau_+}dx
&\geq c \left(\int_{\xO}(|x|^{\tau_+}|v|)^\frac{2N}{N-2s}dx\right)^{\frac{N-2s}{N}}.
\EAL
\ea

	
	If $\xm=\xm_0$ then by \cite[Theorem 3]{tz}, there exists a positive constant $C=C(N,s)$ such that for   $u\in C^\infty_0(\xO)$,
	\bal
	\frac{C_{N,s}}{2}\int_{\BBR^N}\int_{\BBR^N}\frac{|u(x)-u(y)|^2}{|x-y|^{N+2s}}dydx+\xm_0 \int_{\BBR^N}\frac{|u|^2}{|x|^{2s}}dx\geq C(N,s) \left(\int_{\xO}|u|^\frac{2N}{N-2s}X(|x|)^{\frac{2(N-s)}{N-2s}}dx\right)^{\frac{N-2s}{N}},
	\eal
	where $X(t)=1+\ln (\sup_{x\in\Omega} |x|)-\ln t$. Setting $u=|x|^{\frac{2s-N}{2}}v,$  Theorem \ref{th:main-1} implies that
	\ba \label{critsobolev1}
	\BAL
	\int_{\BBR^N}\int_{\BBR^N}\frac{|v(x)-v(y)|^2}{|x-y|^{N+2s}}|y|^{\frac{2s-N}{2}}dy |x|^{\frac{2s-N}{2}}dx &\geq C(N,s) \left(\int_{\xO}|v|^\frac{2N}{N-s}|x|^{-N}X(|x|)^{\frac{2(N-s)}{N-2s}}dx\right)^{\frac{N-2s}{N}}\\&\geq C(N,s,\xO) \left(\int_{\BBR^N}|v|^\frac{2N}{N-2s}|x|^{\frac{2s-N}{2}}dx\right)^{\frac{N-2s}{N}}.
	\EAL
	\ea
	By Theorem \ref{th:main-1}, inequalities \eqref{subcritsobolev1}-\eqref{critsobolev1} are valid for any $v\in H^{s}_0(\xO;|x|^{\tau_+} )$.

Next, we introduce a notion of the tail  adapted to the context of fractional Hardy potential. For any $v\in H^{s}_0(\xO;|x|^{\tau_+})$, the nonlocal tail of $v$ in the ball $B_r(x_0)$ is defined by
\ba \label{def:tail}
T(v;x_0,r):=r^{2s}\max\{|x_0|,r\}^{-\tau_+}\int_{\BBR^N\setminus B_r(x_0)}\frac{|v||x|^{\tau_+}}{|x-x_0|^{N+2s}}dx.
\ea
\begin{proposition}
Assume  $\xm_0 \leq \mu<0$,   $B_r(x_0)\subset\xO$ and $v\in H^{s}_0(\xO;|x|^{\tau_+})$ satisfy \eqref{subsolution}. We additionally assume that $|x_0|\geq \theta r $ with $\theta\in(1,2)$    if $x_0\neq0$. Then there exists a positive constant $C=C(\xO,\xm,N,s,\xg)$ such that
\ba\label{supest}
\sup_{ B_{\frac{r}{2}}(x_0)}(v-k)^+\leq \xd T((v-k)^+;x_0,\frac{r}{2})+C\xd^{-\frac{N}{4s}}\left(\dashint_{B_r(x_0)}((v-k)^+)^2|x|^{\tau_+} dx\right)^\frac{1}{2}
\ea
for any $\xd\in(0,1]$ and $k\in \BBR.$
\end{proposition}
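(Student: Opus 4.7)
The approach is a De Giorgi-type iteration on a decreasing sequence of concentric balls, tailored to the weighted nonlocal setting in the spirit of \cite{DKP}. The two principal ingredients are the Caccioppoli estimate \eqref{cacc} proved above and the weighted Sobolev embedding: \eqref{subcritsobolev1} in the subcritical range $\mu > \mu_0$ and \eqref{critsobolev1} in the critical case $\mu = \mu_0$. Both yield a continuous embedding of $H_0^s(\Omega;|x|^{\tau_+})$ into a suitable weighted $L^q$ space with exponent $q > 2$ (e.g.\ $q = 2N/(N-2s)$), which provides the gain needed for the iteration.

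\textbf{Setup.} Fix $\delta \in (0,1]$, $k \in \R$, and abbreviate $w := (v-k)^+$ and $T_0 := T(w;x_0,r/2)$. I would iterate along $\rho_j := \tfrac{r}{2}(1 + 2^{-j})$, balls $B_j := B_{\rho_j}(x_0)$, and intermediate radii $\tilde\rho_j := (\rho_j + \rho_{j+1})/2$, with cutoffs $\phi_j \in C_0^\infty(B_{\tilde\rho_j}(x_0))$ satisfying $\phi_j \equiv 1$ on $B_{j+1}$, $0 \leq \phi_j \leq 1$ and $|\nabla \phi_j| \leq C 2^j/r$. The truncation levels are $k_j := k + (1-2^{-j}) \kappa$ with $\kappa := \delta T_0 + M$ (where $M$ will be chosen below), and I set $w_j := (v - k_j)^+$, so that $\{w_{j+1} > 0\} \subset \{w_j \geq 2^{-j-1}\kappa\}$.

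\textbf{Iteration step.} Applying \eqref{cacc} to $w_j$ with cutoff $\phi_j$, then the weighted Sobolev embedding to $w_j \phi_j$, and finally a H\"older--Chebyshev argument on the shrinking level sets, I would obtain a nonlinear recursion of the form
\[
A_{j+1} \;\leq\; \frac{C^j}{\kappa^{\beta}} A_j^{1+\alpha} \;+\; \frac{C^j \delta T_0}{\kappa} A_j,
\]
where $A_j := \int_{B_j} w_j^2 |x|^{\tau_+} dx$ and $\alpha,\beta > 0$ depend only on $N,s$. The long-range contribution from the second term of \eqref{cacc} must be identified with a multiple of $T_0$: for $y \in B_{\tilde\rho_j}(x_0)$ and $x \notin B_j$ one has $|x-y| \geq c(\theta)|x-x_0|$, and since $w_j \leq w$ outside $B_r(x_0)$, the tail integral reduces to $r^{-2s}\max\{|x_0|,r\}^{\tau_+} T_0$ up to absolute constants. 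Absorbing the second term by a Young-type inequality produces a factor $\delta^{-N/(4s)}$, which fixes the choice $M := C_\ast \delta^{-N/(4s)} \bigl(\dashint_{B_r(x_0)} w^2 |x|^{\tau_+} dx\bigr)^{1/2}$ with $C_\ast$ large enough to start the iteration. A standard nonlinear iteration lemma then gives $A_j \to 0$, whence $w \leq \kappa$ a.e.\ on $B_{r/2}(x_0)$, which is \eqref{supest}.

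\textbf{Role of the hypothesis and main obstacle.} The principal difficulty is the interplay between the non-translation-invariant weight $|x|^{\tau_+}$ and the tail of $w$ centered at the fixed point $x_0$. When $x_0 \neq 0$, the hypothesis $|x_0| \geq \theta r$ with $\theta > 1$ ensures that $|x|^{\tau_+}$ is comparable to $|x_0|^{\tau_+}$ throughout $B_r(x_0)$; this is what allows the weighted Sobolev embedding to be applied on balls not centered at the origin, and what permits replacing tails centered at varying points $y \in B_{\tilde\rho_j}(x_0)$ by the single tail centered at $x_0$, with constants depending only on $\theta$. When $x_0 = 0$, the weight is centered at the ball and the normalization $\max\{|x_0|,r\}^{-\tau_+} = r^{-\tau_+}$ appearing in \eqref{def:tail} is precisely dimensionally compatible with the embedding \eqref{subcritsobolev1} or \eqref{critsobolev1}, so no comparability is needed. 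Without either of these structural features the iteration would lose its self-improving character, which is why the statement is restricted to $\mu_0 \leq \mu < 0$ (so that $\tau_+$ is controlled) and to this geometric configuration of $B_r(x_0)$.
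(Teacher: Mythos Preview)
Your approach is essentially the paper's: a De Giorgi iteration built on the Caccioppoli estimate \eqref{cacc} and the weighted Sobolev inequality (\eqref{subcritsobolev1} or \eqref{critsobolev1}), with the same choice of radii, levels, and cutoffs; the geometric role of $|x_0|\ge\theta r$ and the $x_0=0$ case are also identified correctly.

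One technical point deserves sharpening. The recursion you wrote, with a separate linear term $\tfrac{C^j\delta T_0}{\kappa}A_j$, would not close: since $\delta T_0/\kappa\le 1$ this only gives $A_{j+1}\le C^jA_j$, and no Young-type absorption fixes that. In the paper the tail contribution from the second line of \eqref{cacc} is not kept as a linear term; instead one uses the pointwise bound $\tilde w_j \le w_j^2/(\tilde k_j-k_j)=2^{j+2}\tilde k^{-1}w_j^2$ (valid on $\{\tilde w_j>0\}$) to convert the factor $\int \tilde w_j\phi_j^2|x|^{\tau_+}$ into $\tilde k^{-1}\int w_j^2|x|^{\tau_+}$. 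This yields a single nonlinear recursion with coefficient $\big(1+\tfrac{T_0}{\tilde k}\big)\le 2\delta^{-1}$, from which the factor $\delta^{-N/(4s)}$ emerges when you solve for the initial threshold $\tilde k\gtrsim \delta^{-N/(4s)}A_0$. With this correction your outline matches the paper's proof.
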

\begin{proof}[\textbf{Proof}]
The proof, which is based on a combination of the idea in \cite[Theorem 1.1]{DKP} and the fractional Hardy inequality, consists of several steps. \medskip

\noindent \textit{Step 1.}
If $x_0 \neq 0$ then by the assumption  $|x_0|>\theta r$, we obtain
\ba \label{xx_0} \frac{\theta-1}{\theta}|x_0|\leq|x|\leq\frac{\theta+1}{\theta}|x_0|, \quad \forall x \in B_r(x_0).
\ea
Consequently,
\ba \label{Brxtp}
\int_{B_r(x_0)}|x|^{\tau_+} dx\approx r^N|x_0|^{\tau_+}.
\ea
Let $r'<r$, for any $x \in \R^N \setminus B_r(x_0)$ and $y \in B_{r'}(x_0)$, we have
\ba \label{xx0-1}
\frac{|x-x_0|}{|x-y|}\leq 1+\frac{|y-x_0|}{|x-y|}\leq 1+\frac{r'}{r-r'}=\frac{r}{r-r'}.
\ea
 Let $\xz\in C_0^\infty(B_{r'}(x_0))$, then by \eqref{xx_0}, standard fractional Sobolev inequality and \eqref{xx0-1}, we obtain
\ba\label{28}\BAL
\Big(\int_{B_r(x_0)}|\xz|^{\frac{2N}{N-2s}}|x|^{\tau_+} dx\Big)^{\frac{N-2s}{N}}
&\leq C|x_0|^{\frac{(N-2s)\tau_+}{N}}\int_{\BBR^N}\int_{\BBR^N}\frac{|\xz(x)-\xz(y)|^2}{|x-y|^{N+2s}}dy dx\\
&=C|x_0|^{\frac{(N-2s)\tau_+}{N}}\int_{B_r(x_0)}\int_{B_r(x_0)}\frac{|\xz(x)-\xz(y)|^2}{|x-y|^{N+2s}}dy dx\\
&\quad+2C|x_0|^{\frac{(N-2s)\tau_+}{N}}\int_{\BBR^N\setminus B_r(x_0)}\int_{B_r(x_0)}\frac{|\xz(y)|^2}{|x-y|^{N+2s}}dy dx\\
&\leq C|x_0|^{\frac{(N-2s)\tau_+}{N}-2\tau_+}\int_{B_r(x_0)}\int_{B_r(x_0)}\frac{|\xz(x)-\xz(y)|^2}{|x-y|^{N+2s}}|y|^{\tau_+} dy |x|^{\tau_+} dx\\
&\quad+ C|x_0|^{\frac{(N-2s)\tau_+}{N}-\tau_+}\left(\frac{r}{r-r'}\right)^{N+2s}r^{-2s}\int_{B_r(x_0)}|\xz(x)|^2|x|^{\tau_+}dx,
\EAL
\ea
where $C=C(N,s,\mu)$. 

If $x_0=0,$ then by using a scaling argument, \eqref{subcritsobolev1} and \eqref{critsobolev1}, we can show that
\ba\label{29}\BAL
&\left(\int_{B_r(0)}|\xz|^{\frac{2N}{N-2s}}|x|^{\tau_+} dx\right)^{\frac{N-2s}{N}}\\
&\leq C r^{\frac{N-2s}{N}(N + \tau_+)-N+2s-2\tau_+}\int_{\BBR^N}\int_{\BBR^N}\frac{|\xz(x)-\xz(y)|^2}{|x-y|^{N+2s}}|y|^{\tau_+} dy |x|^{\tau_+} dx\\
&\leq C r^{\frac{N-2s}{N}(N + \tau_+)-N+2s-2\tau_+}\int_{B_r(0)}\int_{B_r(0)}\frac{|\xz(x)-\xz(y)|^2}{|x-y|^{N+2s}}|y|^{\tau_+} dy |x|^{\tau_+} dx\\
&\quad +C r^{\frac{N-2s}{N}(N+ \tau_+)-N-\tau_+}\left(\frac{r}{r-r'}\right)^{N+2s}\int_{B_r(0)}|\xz(x)|^2|x|^{\tau_+}dx,
\EAL
\ea
where $C=C(N,s,\mu,\Omega)$. 

For a function $\varphi \in L^1(B_r(x_0))$, set
\bal
\dashint_{B_r(x_0)}\varphi(x) |x|^{\tau_+} dx:=\left(\int_{B_r(x_0)}|x|^{\tau_+}dx\right)^{-1}\int_{B_r(x_0)}\varphi |x|^{\tau_+} dx.
\eal
Then, by \eqref{Brxtp}, \eqref{28} and \eqref{29} can be written as follows
\ba\label{30}\BAL
\left(\dashint_{B_r(x_0)}|\xz|^{\frac{2N}{N-2s}}|x|^{\tau_+} dx\right)^{\frac{N-2s}{N}}
&\leq C r^{2s}\max\{|x_0|,r\}^{-\tau_+}\dashint_{B_r(x_0)}\int_{B_r(x_0)}\frac{|\xz(x)-\xz(y)|^2}{|x-y|^{N+2s}}|y|^{\tau_+} dy |x|^{\tau_+} dx\\
&\quad+ C\left(\frac{r}{r-r'}\right)^{N+2s}\dashint_{B_r(x_0)}|\xz(x)|^2|x|^{\tau_+}dx.
\EAL
\ea

\noindent \textit{Step 2.} For any $j\in \BBN\cup\{0\}$ we set
\bal
r_j=\frac{1}{2}(1+2^{-j})r,\quad\tilde r_j=\frac{r_j+ r_{j+1}}{2},\quad B_j=B_{r_j}(x_0)\quad\text{and}\;\;\tilde B_j=B_{\tilde r_j}(x_0).
\eal
Let $k\in \BBR,$ $\tilde k\in\BBR_+$, put \bal k_j=k+(1-2^{-j})\tilde k \quad \text{and} \quad \tilde k_j=\frac{k_{j}+k_{j+1}}{2}.
\eal
Set
\bal
w_j=(v-k_j)^+ \quad \text{and} \quad
\tilde w_j=(v-\tilde k_j)^+.
\eal

Let $\phi_j \in C_0^\infty(\tilde B_j)$ such that $0 \leq \phi_j \leq 1$ in $\tilde B_j$, $\xf_j=1$ in $B_{j+1}$ and $|\nabla \xf_j|\leq \frac{2^{j+3}}{r}$ in $\tilde B_j$. By \eqref{30}, we have
\ba \label{30-a}\BAL
&\bigg(\dashint_{B_j}|\tilde w_j\xf_j |^{\frac{2N}{N-2s}}|x|^{\tau_+}dx\bigg)^{\frac{N-2s}{N}}\\
&\qquad \leq C r^{2s}\max\{|x_0|,r\}^{-\tau_+}\dashint_{B_j}\int_{B_j}\frac{|\tilde w_j(x)\xf_j(x)-\tilde w_j(y)\xf_j(y)|^2}{|x-y|^{N+2s}}|y|^{\tau_+}dy |x|^{\tau_+}dx\\
&\qquad\qquad+C2^{j(N+2s)}\dashint_{B_j} |\tilde w_j(y)\xf_j(y)|^2|y|^{\tau_+}dy.
 \EAL
\ea
We will estimate the first term on the right hand side of \eqref{30-a} by applying \eqref{cacc} with $B_j$, $\tilde w_j$, $\phi_j$ in place of  $B_r(x_0)$, $w^+$, $\phi$ respectively. Hence, we obtain
\ba\label{30-b}\BAL
&\dashint_{B_j}\int_{B_j}\frac{(\tilde w_j(x)\xf_j(x)-\tilde w_j(y)\xf_j(y))^2}{|x-y|^{N+2s}}|y|^{\tau_+}dy |x|^{\tau_+}dx\\
&\leq
C \dashint_{B_j}\int_{B_j}\max\{\tilde w_j(x)^2, \tilde w_j(y)^2\}\frac{(\xf_j(x)-\xf_j(y))^2}{|x-y|^{N+2s}}|y|^{\tau_+}dy |x|^{\tau_+}dx\\
&\quad +C\dashint_{B_j}\tilde w_j(x)\xf(x)^2|x|^{\tau_+} dx\left(\sup_{y\in\supp\xf_j}\int_{\BBR^N\setminus B_j(x_0)}\frac{\tilde w_j(x)}{|x-y|^{N+2s}}|x|^{\tau_+} dx\right).
\EAL
\ea
We note that for $y \in \supp \phi_j \subset \tilde B_j$ and $x \in \R^N \setminus B_j$,
\ba \label{x-x_0} \frac{|x-x_0|}{|x-y|} \leq 1 + \frac{\tilde r_j}{r_j -\tilde r_j} \leq 2^{j+4}.
\ea
Thanks to the estimates $\tilde w_j\leq \frac{w_j^2}{\tilde k_j-k_j}$, $\tilde w_j \leq w_j$, estimate\eqref{x-x_0} and the definition of nonlocal tail of $w_0$ in $B_{\frac{r}{2}}(x_0)$ in \eqref{def:tail}, we have
\ba\label{31}\BAL
&r^{2s}\max\{|x_0|,r\}^{-\tau_+}\dashint_{B_j}\tilde w_j(x)\xf_j^2|x|^{\tau_+} dx\left(\sup_{y\in\supp\xf_j}\int_{\BBR^N\setminus B_j}\frac{\tilde w_j(x)}{|x-y|^{N+2s}}|x|^{\tau_+} dx\right)\\
&\leq   Cr^{2s}\max\{|x_0|,r\}^{-\tau_+}2^{j(N+2s)}\dashint_{B_j}\frac{w_j^2(x)}{\tilde k_j-k_j}|x|^{\tau_+} dx \int_{\BBR^N\setminus B_j}\frac{ w_j(x)}{|x-x_0|^{N+2s}}|x|^{\tau_+} dx\\
&\leq C\frac{2^{j(N+2s+1)}}{\tilde k}\left(\dashint_{B_j}w_j^2(x)|x|^{\tau_+} dx\right) T(w_0;x_0,\frac{r}{2}).
\EAL
\ea
Next we estimate the first term on the right hand side of \eqref{30-b}. If $x_0 \neq 0$ then by using the estimate  $|\nabla \phi_j| \leq \frac{2^{j+3}}{r}$ in $\tilde B_j$,\eqref{xx_0} and $\tilde w_j \leq w_j$, we obtain
\ba\label{32}\BAL
&r^{2s}|x_0|^{-\tau_+}\dashint_{B_j}\int_{B_j}\max\{\tilde w_j(x)^2,\tilde w_j(y)^2\}\frac{(\xf_j(x)-\xf_j(y))^2}{|x-y|^{N+2s}}|y|^{\tau_+}dy |x|^{\tau_+}dx \\
&\leq C 2^{2j}r^{2s-2}|x_0|^{-\tau_+}\dashint_{B_j}\tilde w_j(x)^2\left(\int_{B_j} |x-y|^{-N-2s+2}|y|^{\tau_+} dy\right)|x|^{\tau_+} dx\\
&\leq  C 2^{2j}r^{2s-2}|x_0|^{-\tau_+} r^{-2s+2} |x_0|^{\tau_+} \dashint_{B_j}|w_j(x)|^2|x|^{\tau_+} dx \\
&= C2^{2j}\dashint_{B_j}|w_j(x)|^2|x|^{\tau_+} dx.
\EAL
\ea

Now let us consider the case $x_0 =0$.

By using the assumption that $\phi_j=1$ in $B_{j+1}\Supset B_{\frac{r}{4}}$, we can split the first term on the right hand side of \eqref{30-b} as
\ba\label{32-b}\BAL
&r^{2s-\tau_+}\dashint_{B_j}\int_{B_j}\max\{\tilde w_j(x)^2,\tilde w_j(y)^2\}\frac{(\xf_j(x)-\xf_j(y))^2}{|x-y|^{N+2s}}|y|^{\tau_+}dy |x|^{\tau_+}dx\\
&=S_jr^{2s-\tau_+} \left(2\int_{B_j \setminus B_{\frac{r}{4}}}\int_{B_{\frac{r}{4}}}\ldots  dx  +\; \int_{B_j \setminus B_{\frac{r}{4}}}\int_{B_j \setminus B_{\frac{r}{4}}}\ldots dx \right) \\
&=:I_{j,1} + I_{j,2},
\EAL \ea
where
\bal S_j: = \left( \int_{B_j}|x|^{\tau_+}dx \right)^{-1}.
\eal

We will estimate $I_{j,1}$. By using the fact that $\phi_j=1$ in $B_{j+1}$, the estimate  $|\nabla \phi_j| \leq \frac{2^{j+3}}{r}$ in $\tilde B_j$ and the estimate $\tilde w_j \leq w_j$, we obtain
\ba \label{32-c-0} \BAL
I_{j,1}&=2S_jr^{2s-\tau_+} \int_{B_j\setminus B_{j+1}}\int_{B_{\frac{r}{4}}}\max\{\tilde w_j(x)^2,\tilde w_j(y)^2\}\frac{(\xf_j(x)-\xf_j(y))^2}{|x-y|^{N+2s}}|y|^{\tau_+}dy |x|^{\tau_+}dx\\
&\leq C 2^{2j} S_j  r^{2s-2-\tau_+}\int_{B_j \setminus B_{j+1}} w_j(x)^2\left(\int_{B_{\frac{r}{4}}} |x-y|^{-N-2s+2}|y|^{\tau_+} dy\right)|x|^{\tau_+} dx\\
&\leq C 2^{2j} S_j \int_{B_j \setminus B_{j+1}} w_j(x)^2|x|^{\tau_+} dx,
\EAL \ea
since $|x-y|\geq \frac{r}{4}$ for any $x\in B_j \setminus B_{j+1}$ and $y\in B_{\frac{r}{4}}.$

By a similar, and simpler, argument as above, we can show that
\ba \label{32-f} \BAL
{ I_{j,2}}
\leq C2^{2j} S_j  \int_{B_j \setminus B_{j+1}}w_j(x)^2|x|^{\tau_+} dx.
\EAL
\ea
From \eqref{32-b}-\eqref{32-f}, we obtain
\ba\label{32-g}\BAL
&r^{2s-\tau_+}\dashint_{B_j}\int_{B_j}\max\{\tilde w_j(x)^2,\tilde w_j(y)^2\}\frac{(\xf_j(x)-\xf_j(y))^2}{|x-y|^{N+2s}}|y|^{\tau_+}dy |x|^{\tau_+}dx\\
&\leq C2^{2j} S_j\dashint_{B_j \setminus B_{j}}w_j(x)^2|x|^{\tau_+} dx.
\EAL \ea
Combining \eqref{30-a}, \eqref{31}, \eqref{32} and \eqref{32-g}, we derive
\ba\label{33}\BAL
&\bigg(\dashint_{B_j}|\tilde w_j\xf_j |^{\frac{2N}{N-2s}}|x|^{\tau_+}dx\bigg)^{\frac{N-2s}{N}} \\
&\leq C 2^{j(N+2s+1)}\left(1+\frac{1}{\tilde k}T(w_0;x_0,\frac{r}{2})\right)\dashint_{B_j}w_j(x)^2 |x|^{\tau_+} dx.
\EAL
\ea

\noindent \textit{Step 3.} Since
\bal
|\tilde w_j|^{\frac{2N}{N-2s}}\geq ( k_{j+1}-\tilde k_j)^{\frac{4s}{N-2s}} w_{ j+1}^2\geq (2^{-j-2}\tilde k)^\frac{4s}{N-2s}w_{j+1}^2,
\eal
it follows that (noticing that $\phi_j=1$ in $B_{j+1}$)
\ba\label{34}\BAL
\bigg(\dashint_{B_j}|\tilde w_j\xf_j |^{\frac{2N}{N-2s}}|x|^{\tau_+}dx\bigg)^{\frac{N-2s}{N}}\geq (2^{-j-2}\tilde k)^\frac{4s}{N}\bigg(\dashint_{B_{j+1}}w_{j+1}^2|x|^{\tau_+}dx\bigg)^{\frac{N-2s}{N}}.
\EAL
\ea

Set
\bal
A_j:=\bigg(\dashint_{B_{j}} w_j ^2|x|^{\tau_+}dx\bigg)^{\frac{1}{2}},
\eal
then by \eqref{34} and \eqref{33}, we deduce

\bal
(2^{-j-2}\tilde k)^\frac{4s}{N} A_{j+1}^{\frac{2(N-2s)}{N}}\leq C 2^{j(N+2s+1)}\left(1+\frac{T(w_0;x_0,\frac{r}{2})}{\tilde k}\right)A_j^2.
\eal

Assuming $\tilde k\geq \xd T(w_0;x_0,\frac{r}{2})$ for some $\xd\in (0,1],$ we obtain
\ba\label{35}
\left(\frac{A_{j+1}}{\tilde k}\right)\leq C 2^{j(\frac{N+2s+1}{2}+\frac{2s}{N})\frac{N}{N-2s}}\xd^{-\frac{N}{2(N-2s)}}\left(\frac{A_j}{\tilde k}\right)^{1+\frac{2s}{N-2s}}.
\ea
We will show by induction that there exists $\xs>1$ such that
\ba \label{Aj<A0}
A_{j}\leq \sigma^{-j} A_0,\quad\forall j\in \BBN\cup\{0\}.
\ea
Obviously is valid for $j=0.$  We assume that it is true for $j=l-1.$ By \eqref{35} we have
\bal
A_{l} &\leq C 2^{l(\frac{N+2s+1}{2}+\frac{2s}{N})\frac{N}{N-2s}}\delta^{-\frac{N}{2(N-2s)}} \left(\frac{A_{l-1}}{\tilde k}\right)^{\frac{2s}{N-2s}} A_{l-1}\\
&\leq C 2^{l(\frac{N+2s+1}{2}+\frac{2s}{N})\frac{N}{N-2s}}\xd^{-\frac{N}{2(N-2s)}}\xs^{(1-l)\frac{N}{N-2s}}\left(\frac{A_{0}}{\tilde k}\right)^{\frac{2s}{N-2s}}
A_{0}\\
&\leq C\xs^{\frac{N}{N-2s}}\xs^{-l} 2^{l(\frac{N+2s+1}{2}+\frac{2s}{N})\frac{N}{N-2s}}\xd^{-\frac{N}{2(N-2s)}}\xs^{-\frac{2s}{N-2s}l}
\left(\frac{A_{0}}{\tilde k}\right)^{\frac{2s}{N-2s}}A_{0}.
\eal
Taking $\xs=2^{(\frac{N+2s+1}{2}+\frac{2s}{N})\frac{N}{2s}}>1$ we obtain
\bal
A_{l} &\leq C\xs^{\frac{N}{N-2s}}\xs^{-l}\xd^{-\frac{N}{2(N-2s)}}\left(\frac{A_{0}}{\tilde k}\right)^{\frac{2s}{N-2s}}A_{0}.
\eal
Hence it is enough to choose $\tilde k$ such that
\ba \label{A0k}
C\xs^{\frac{N}{N-2s}}\xd^{-\frac{N}{2(N-2s)}}\left(\frac{A_{0}}{\tilde k}\right)^{\frac{2s}{N-2s}}\leq1.
\ea
We choose
\bal
\tilde k = \xd T(w_0;x_0,\frac{r}{2})+C^\frac{N-2s}{2s}A_0 \xd^{\frac{-N}{4s}}\xs^{\frac{N}{2s}},
\eal
where $C$ is the constant in \eqref{A0k}, then $A_l \leq \sigma^{-l} A_0$. Then by induction, we deduce \eqref{Aj<A0}.
Letting $j\to\infty$ in \eqref{Aj<A0}, we have that $\lim_{j\to\infty} A_j=0$, namely
\bal
(v-k-\tilde k)^+=0,\;\;a.e.\;\;\text{in}\;\;B_{\frac{r}{2}}(x_0),
\eal
which implies \eqref{supest}. The proof is complete.
\end{proof}


Employing the above result and adapting the idea in \cite[Corollary 2.1]{KMS} to our setting, we obtain the following lemma. 

\begin{lemma}\label{estimateorigin}
Let $\xm_0\leq\xm\leq0,$ $r>0$ be such that $B_r(0)\subset\xO$ and $v\in H^{s}_0(\xO;|x|^{\tau_+})$ satisfy \eqref{subsolution}. Then there exists a positive constant $C=C(\xO,N,s,\xm)$ such that
\bal
\sup_{B_{\frac{r}{2}}}v^+\leq C\left(T(v^+;0,\frac{r}{2})+\dashint_{B_r}v^+(x)|x|^{\tau_+} dx\right).
\eal
\end{lemma}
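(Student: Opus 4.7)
\medskip

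\noindent\textbf{Proof plan.} The strategy is to upgrade the $L^2$-type bound \eqref{supest} proved in the previous proposition to the desired $L^1$-type estimate via an interpolation plus absorption scheme inspired by \cite[Corollary~2.1]{KMS}. As a starting point, I would apply \eqref{supest} with $x_0=0$ and $k=0$, which reads, for each $\delta\in(0,1]$,
\bal
\sup_{B_{r/2}}v^+ \leq \delta\, T(v^+;0,r/2) + C\delta^{-\frac{N}{4s}} \Big(\dashint_{B_r} (v^+)^2 |x|^{\tau_+} dx\Big)^{1/2}.
\eal
The elementary interpolation $(v^+)^2 \leq (\sup_{B_r} v^+)\,v^+$ then produces
\bal
\sup_{B_{r/2}} v^+ \leq \delta\, T(v^+;0,r/2) + C\delta^{-\frac{N}{4s}}\big(\sup_{B_r}v^+\big)^{1/2}\Big(\dashint_{B_r} v^+|x|^{\tau_+}dx\Big)^{1/2}.
\eal

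The principal obstacle is that $\sup_{B_r}v^+$ on the right is larger than $\sup_{B_{r/2}}v^+$ on the left, which rules out any direct absorption. To bridge this gap I would first derive a sharper version of \eqref{supest} between two arbitrary concentric balls $B_{\rho_1}\subset B_{\rho_2}$ with $r/2\leq \rho_1 < \rho_2 \leq r$, by rerunning the Moser iteration of the preceding proof but with the modified sequence $r_j=\rho_1+(\rho_2-\rho_1)2^{-j}$ converging to $\rho_1$ from above, and with cutoffs $\phi_j$ satisfying $|\nabla \phi_j|\lesssim 2^j/(\rho_2-\rho_1)$. The outcome takes the form
\bal
\sup_{B_{\rho_1}} v^+ \leq \delta\, T(v^+;0,\rho_1) + C\delta^{-\frac{N}{4s}}\Big(\tfrac{r}{\rho_2-\rho_1}\Big)^{\alpha}\Big(\dashint_{B_{\rho_2}} (v^+)^2 |x|^{\tau_+}dx\Big)^{1/2}
\eal
for some $\alpha=\alpha(N,s)>0$. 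Interpolating as above, applying Young's inequality with a parameter calibrated so as to absorb a term of the form $\tfrac12\sup_{B_{\rho_2}}v^+$, and using the straightforward tail monotonicity
\bal
T(v^+;0,\rho_1)\leq C\Big[T(v^+;0,r/2)+\dashint_{B_r}v^+|x|^{\tau_+}dx\Big]
\eal
(which follows by splitting the tail integral at $|y|=r$, using $\tau_+\leq 0$, and the asymptotics $\int_{B_r}|x|^{\tau_+}dx\approx r^{N+\tau_+}$), I arrive at
\bal
\sup_{B_{\rho_1}} v^+ \leq \tfrac12 \sup_{B_{\rho_2}} v^+ + C\, T(v^+;0,r/2) + C \Big(\tfrac{r}{\rho_2-\rho_1}\Big)^{\beta} \dashint_{B_r} v^+|x|^{\tau_+}dx,
\eal
valid for all $r/2\leq \rho_1<\rho_2\leq r$ and some $\beta>0$.

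Finally, the classical iteration lemma (see, e.g., Giaquinta--Giusti) applied to $\phi(\rho):=\sup_{B_\rho}v^+$ on $[r/2,r]$ absorbs the first term on the right-hand side and delivers the claim. The borderline case $\mu=0$ (where $\tau_+=0$) reduces to the unweighted estimate of \cite[Corollary~2.1]{KMS}. The technically heaviest step is the refined version of \eqref{supest} on arbitrary concentric balls, but it is a routine rerun of the Moser iteration already carried out above, with straightforward bookkeeping of the negative powers of $(\rho_2-\rho_1)$ generated by the rescaled cutoffs.
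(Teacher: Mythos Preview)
Your overall strategy---interpolate $(v^+)^2\leq (\sup v^+)\,v^+$, absorb via Young, and close with a Giaquinta--Giusti iteration on $\phi(\rho)=\sup_{B_\rho}v^+$---matches the paper's second half exactly. The difference lies in how the intermediate $L^2$ estimate between concentric balls is obtained. You propose to \emph{rerun} the De Giorgi/Moser iteration of the preceding proposition on $B_{\rho_1}\subset B_{\rho_2}$ centred at the origin, tracking the negative powers of $(\rho_2-\rho_1)$ produced by the rescaled cutoffs. The paper instead applies \eqref{supest} \emph{as a black box} at small \emph{off-centre} balls $B_{\frac{(\gamma-t)r}{100}}(z)$ with $z\in B_{tr}\setminus B_{r/4}$ (so that $|z|\geq\theta\cdot\text{radius}$ and the weight $|x|^{\tau_+}$ is essentially constant there), and then converts the off-centre tail $T(v^+;z,\cdot)$ back to $T(v^+;0,r/2)$ plus the $L^2$ average by a direct kernel comparison, namely \eqref{36}. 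This delivers \eqref{100} without reopening the iteration machinery. Your route is legitimate and somewhat more self-contained (it uses only the $x_0=0$ case of the previous proposition), while the paper's is shorter because it reuses \eqref{supest} rather than reproving a sharpened version of it. Incidentally, your tail-monotonicity step is simpler than you state: since $\rho_1\geq r/2$ and $2s-\tau_+>0$, one has directly $T(v^+;0,\rho_1)\leq 2^{2s-\tau_+}\,T(v^+;0,r/2)$ without any averaged correction term.
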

\begin{proof}[\textbf{Proof}]
Let $\frac{1}{2}\leq t < \gamma \leq1$ and $z\in B_{t r}\setminus B_{\frac{r}{4}}$. We note that $B_{\frac{(\xg-t)r}{100}}(z) \subset B_{\gamma r}$.  Then by applying \eqref{supest} with $B_{\frac{(\xg-t)r}{100}}(z)$ and $k=0$, we obtain
\ba \label{36-a} \BAL
\sup_{x\in B_{\frac{(\xg-t)r}{200}}(z)}v^+&\leq C T(v^+;z,\frac{(\xg-t)r}{200})+C\left(\dashint_{B_{\frac{(\xg-t)r}{100}}(z)}(v^+(x))^2|x|^{\tau_+} dx\right)^\frac{1}{2}\\
&\leq C T(v^+;z,\frac{(\xg-t)r}{200})+\frac{C}{(\xg-t)^{\frac{N}{2}}}\left(\dashint_{B_{\xg r}}(v^+(x))^2|x|^{\tau_+} dx\right)^\frac{1}{2}.
\EAL \ea

Now, since $\frac{|x|}{|x-z|}\leq \frac{C}{\xg-t}$ for any $x\in\BBR^N\setminus B_{\frac{(\xg-t)r}{200}}(z),$  by \eqref{def:tail} and H\"older inequality, we have
\ba\label{36}\BAL
 T(v^+;z,\frac{(\xg-t)r}{200})&\leq C(\xg-t)^{2s}
 r^{2s-\tau_+}\int_{\BBR^N\setminus B_{\frac{(\xg-t)r}{200}}(z)} \frac{v^+}{|x-z|^{N+2s}}|x|^{\tau_+} dx\\
&\leq
 C(\xg-t)^{-N}r^{2s-\tau_+}\int_{(\BBR^N\setminus B_{\frac{(\xg-t)r}{200}}(z))\setminus B_{\frac{r}{2}}} \frac{v^+}{|x|^{N+2s}}|x|^{\tau_+} dx\\ &+C(\xg-t)^{-N}r^{-N-\tau_+}\int_{(\BBR^N\setminus B_{\frac{(\xg-t)r}{200}}(z))\cap B_{\frac{r}{2}}} v^+|x|^{\tau_+} dx\\
&\leq C(\xg-t)^{-N}\bigg(T(v^+;0,\frac{r}{2})+\left(\dashint_{B_{\xg r}}(v^+(x))^2|x|^{\tau_+} dx\right)^\frac{1}{2}\bigg).
\EAL
\ea
Combining \eqref{36-a} and \eqref{36} yields
\ba\label{100}
\sup_{x\in B_{tr}}v^+\leq C(\xg-t)^{-N}\bigg(T(v^+;0,\frac{r}{2})+\left(\dashint_{B_{\xg r}}(v^+(x))^2|x|^{\tau_+} dx\right)^\frac{1}{2}\bigg).
\ea
By Young's inequality, we have
\bal
  (\xg-t)^{-N}\left(\dashint_{B_{\xg r}}(v^+(x))^2|x|^{\tau_+} dx\right)^\frac{1}{2}  
&\leq  (\xg-t)^{-N}\left(\sup_{x\in B_{\xg r}}v^+\dashint_{B_{\xg r}}v^+(x)|x|^{\tau_+} dx\right)^\frac{1}{2}\\
&\leq \frac{1}{2C}\sup_{x\in B_{\xg r}}v^++2C(\xg-t)^{-2N}\dashint_{B_{\xg r}}v^+(x)|x|^{\tau_+} dx,
\eal
where $C$ is the constant in \eqref{100}. Therefore
\ba \label{supv+}
\sup_{x\in B_{tr}}v^+\leq\frac{1}{2}\sup_{x\in B_{\xg r}}v^++2C^2(\xg-t)^{-2N}\left(\dashint_{B_{\xg r}}v^+(x)|x|^{\tau_+} dx+T(v^+;0,\frac{r}{2})\right).
\ea

For any $t\in[\frac{1}{2},1],$ we set $f(t)=\sup_{x\in B_{tr}}v^+$ and
\bal
c_0=2C^2\left(\dashint_{B_{\xg r}}v^+(x)|x|^{\tau_+} dx+T(v^+;0,\frac{r}{2})\right).
\eal
Then \eqref{supv+} can be written as
\bal
f(t)\leq \frac{1}{2}f(\gamma)+\frac{c_0}{(\xg-t)^{2N}}\quad \text{for all } \frac{1}{2}\leq t<\xg\leq1.
\eal
Let $\tau\in (0,1)$ and $\xa=2N.$ We consider the sequence $t_0=t$ and
\bal
t_{i+1}=t_i+(1-\tau)\tau^i(\xg-t)=t_0+(1-\tau)(\xg-t)\sum_{j=0}^i \tau^{j}\to \xg.
\eal
By iteration, we have
\bal
f(t)=f(t_0)=2^{-k}f(t_k)+\frac{c_0}{(1-\tau)^\xa(\xg-t)^\xa}\sum_{i=0}^{k-1}2^{-i}\tau^{-i\xa}.
\eal
Choosing $2^{-\frac{1}{\xa}}<\tau<1$ and letting $k\to\infty$ in the above expression, we obtain
\bal
f(t)\leq C(\xa,\tau)\frac{c_0}{(1-\tau)^\xa(\xg-t)^\xa},
\eal
which yields the desired inequality.
\end{proof}

Now, we turn to

\begin{proof}[\textbf{Proof of Theorem \ref{est1}}]
Let $\{\zeta_{\delta}\}_{\delta>0}$ be the sequence of standard mollifiers and denote $\nu_{1,\delta} = \zeta_{\delta} \ast \nu^+$ and $\nu_{2,\delta} = \zeta_{\delta} \ast \nu^-$. For $\delta>0$,
 $\nu_{i,\delta} \in C_0^\infty(\xO\setminus\{0\})$, $i=1,2$, and
 $\dist(\supp \nu_{1,\delta},\{0\})>\frac{r}{2},$ $\dist(\supp \nu_{2,\delta},\{0\})>\frac{r}{2}$. Moreover,
\ba \label{fngn}
\int_{\xO}\nu_{1,\delta}|x|^{\tau_+} dx\to \int_{\xO\setminus\{0\}}|x|^{\tau_+} d\xn^+\quad\text{and}\quad \int_{\xO} \nu_{2,\delta}|x|^{\tau_+} dx\to \int_{\xO\setminus\{0\}}|x|^{\tau_+} d\xn^-.
\ea
Let $u_\delta$ be the weak solution of
\bal \left\{ \BAL
\CL_\xm^s u&=\nu_{1,\delta}- \nu_{2,\delta}&&\quad\text{in}\;\,\xO,\\
u&=0&&\quad\text{in}\;\,\BBR^N\setminus\xO.
\EAL \right.
\eal
{Since $\Omega$ is a bounded domain satisfying the exterior ball condition and containing the origin, by \cite[Theorem 1.6]{CGN}, for any $b<2s-\tau_+$,} there holds
\ba \label{ud12}
\int_{\xO}|u_\delta||x|^{-b}dx\leq C(N,\xO,\xm,s,b)\int_{\xO}(\nu_{1,\delta}+\nu_{2,\delta})|x|^{\tau_+} dx.
\ea
Furthermore, in view of the proof of {\cite[Theorem 1.7]{CGN}}, $u_\delta \to u$ a.e. in $\xO$   and   in   $L^1(\xO;|x|^b)$. Put $v_\delta=|x|^{-\tau_+}u_\delta$, then $v_\delta\in H^{s}_0(\xO;|x|^{\tau_+})$ and $v_\delta$ satisfies
\bal
\langle v_\delta,\phi \rangle_{s,\tau_+}
=\int_{\xO}\big(\nu_{1,\delta}(x)-\nu_{2,\delta}(x)\big)\xf(x)|x|^{2\tau_+}dx, \quad \forall \xf\in H^{s}_0(\xO;|x|^{\tau_+}).
\eal
Hence,
\bal
\langle v_\delta,\phi \rangle_{s,\tau_+}
=0, \quad \forall \xf\in C_0^\infty(B_{\frac{r}{2}}).
\eal
{ By Lemma \ref{estimateorigin}, the definition of the tail in \eqref{def:tail} and the fact that $v_{\delta}^+ =0$ in $\Omega$, we deduce that 
\bal \BAL
&\quad \sup_{x\in B_{\frac{r}{4}} \setminus \{0\} }v_\delta^+(x)\\  
&\leq C(N,\Omega,s,\mu)\Big(T(v_\delta^+;0,\frac{r}{4})+\dashint_{B_{\frac{r}{2} }}v_\delta^+(x)|x|^{\tau_+} dx\Big) \\
&\leq C(N,\Omega,s,\mu) \Big( 4^{N+2s}r^{-(N+\tau_+)}\int_{\R^N\setminus B_{\frac{r}{4} } }v_\delta^+(x)|x|^{\tau_+} dx +  \big(\int_{B_{\frac{r}{2} }} |x|^{\tau_+}dx \big)^{-1} \int_{B_{\frac{r}{2} }}v_\delta^+(x)|x|^{\tau_+} dx  \Big) \\
&\leq C(N,\Omega,s,\mu,r)\int_{\Omega}v_\delta^+(x)|x|^{\tau_+}dx.
\EAL \eal
Similarly, we can show that
\bal
\sup_{x\in B_{\frac{r}{4}} \setminus \{0\} }v_\delta^-(x) \leq C(N,\Omega,s,\mu,r)\int_{\Omega}v_\delta^-(x)|x|^{\tau_+}dx.
\eal
Adding two preceding estimates and using estimate \eqref{ud12}, for any $b \in [0,2s-\tau_+)$, we obtain 
\bal 
\BAL
\sup_{x\in B_{\frac{r}{4}}(0) \setminus \{0\} }|v_\delta(x)| 
&\leq C(N,\xO,s,\xm,r)\int_{\xO}|v_\delta||x|^{\tau_+} dx \\
&\leq C(N,\xO,s,\xm,r)\int_{\xO}|u_\delta||x|^{-b}dx\   \leq C(N,\Omega,s,\mu,r,b)\int_{\xO}(\nu_{1,\delta}+\nu_{2,\delta})|x|^{\tau_+} dx.
\EAL \eal
By letting $\delta \to 0$ and employing the convergence \eqref{fngn} and the fact that $v_\delta \to |x|^{-\tau_+}u$ a.e. in $\Omega$ as $\delta \to 0$, we derive \eqref{supu|x|}.  }
\end{proof}

\section{The semilinear problem with interior measure data}

\subsection{Estimates in the weighted weak Lebesgue space} We start this subsection by recalling the definition of weak Lebesgue space. 
For $\alpha \in \R$  and $1 \leq q <\infty$, we denote by $L_w^q(\Omega \setminus \{0\};|x|^{\alpha})$ the weighted weak Lebesgue space (or weighted Marcinkiewicz space) defined by
\bal
L_w^q(\Omega \setminus \{0\};|x|^{\alpha} ):= \left\{ u \in L^1_{\loc}(\Omega \setminus \{0\}): \sup_{\lambda > 0} \lambda^q \int_{\{ x \in \Omega \setminus \{0\}: |u(x)| \geq \lambda \} } |x|^{\alpha} dx < +\infty \right\}.
\eal
Denote
\ba \label{semi}
\norm{u}^*_{L_w^q(\Omega \setminus \{0\}; |x|^{\alpha})}:=\left(\sup_{\lambda > 0} \lambda^q \int_{\{ x \in \Omega \setminus \{0\}: |u(x)| \geq \lambda \} } |x|^{\alpha} dx\right)^{\frac{1}{q}}.
\ea
Note that $\norm{\cdot}_{L_w^q(\Omega \setminus \{0\}; |x|^{\alpha})}^*$ is not a norm, but for $\kappa>1$, it is
equivalent to the norm
\bal \norm{u}_{L_w^q(\Omega \setminus \{0\}; |x|^{\alpha})}:=\sup\left\{
\frac{\int_{A}|u||x|^{\alpha}dx}{(\int_A |x|^{\alpha}dx)^{1-\frac{1}{q}}}: A \subset D, \, A \text{
	measurable},\, 0<\int_A |x|^{\alpha}dx <\infty \right\}. \eal
More precisely,
\ba \label{equinorm} \norm{u}_{L_w^q(\Omega \setminus \{0\}; |x|^{\alpha})}^* \leq \norm{u}_{L_w^q(\Omega \setminus \{0\}; |x|^{\alpha})}
\leq \frac{q}{q-1}\norm{u}_{L_w^q(\Omega \setminus \{0\}; |x|^{\alpha})}^*.
\ea
The following strict continuous embedding  hold
\bal
L^q(\Omega \setminus \{0\}; |x|^{\alpha}) \hookrightarrow L_w^q(\Omega \setminus \{0\}; |x|^{\alpha}) \hookrightarrow L^m(\Omega \setminus \{0\}; |x|^{\alpha})
\eal
for $1 \leq m < q < \infty$.

\begin{proposition} \label{Marcin-3}
Assume $\mu \geq \mu_0$ and $\nu \in \GTM(\Omega \setminus \{0\};|x|^{\tau_+})$ such that
\bal
\dist(\supp|\nu|,\{0\})>4r_0
.\eal
Let $u$ be the unique solution of 	\eqref{veryweaksolutionmesure}. Then $u \in L_w^{\frac{N}{N-2s} }(\Omega \setminus \{0\};|x|^{\tau_+})$ and
\ba \label{Marcin-2} \BAL
\| u \|_{L_w^{\frac{N}{N-2s} }(\Omega \setminus \{0\};|x|^{\tau_+})} \leq C(N,\Omega,s,\mu,r_0) \| \nu \|_{\GTM(\Omega \setminus \{0\}; |x|^{\tau_+} )}.
\EAL \ea
\end{proposition}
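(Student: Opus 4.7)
The plan is to decompose $\Omega$ into a region near the origin and a region away from it, exploiting the support hypothesis on $\nu$. I set $\Omega_1 := \Omega \cap B_{r_0}(0)$ and $\Omega_2 := \Omega \setminus \overline{B_{r_0}(0)}$. Since the superlevel sets split as disjoint unions under this partition, it suffices to bound
\bal
\sup_{\lambda > 0} \lambda^{\frac{N}{N-2s}} \int_{\{|u| > \lambda\} \cap \Omega_i} |x|^{\tau_+}\, dx, \qquad i = 1, 2,
\eal
by a constant multiple of $\|\nu\|_{\GTM(\Omega \setminus \{0\};|x|^{\tau_+})}^{N/(N-2s)}$.

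For the contribution on $\Omega_1$, the hypothesis $\dist(\supp|\nu|,\{0\}) > 4r_0$ allows me to apply Theorem \ref{est1} with $r = 4r_0$, giving the pointwise bound $|u(x)| \leq C|x|^{\tau_+}\|\nu\|_{\GTM(\Omega\setminus\{0\};|x|^{\tau_+})}$ on $B_{r_0} \setminus \{0\}$. When $\tau_+ \geq 0$ (i.e.\ $\mu \geq 0$), both $|x|^{\tau_+}$ and hence $u$ are bounded on $\Omega_1$, and the Marcinkiewicz estimate is immediate. When $\tau_+ < 0$ (i.e.\ $\mu_0 \leq \mu < 0$), the superlevel set $\{|u| > \lambda\} \cap \Omega_1$ is contained in a ball of radius $(\lambda/(C\|\nu\|))^{1/\tau_+}$, and using $\int_{B_r(0)} |x|^{\tau_+}\,dx \approx r^{N+\tau_+}$ (valid since $N + \tau_+ \geq (N+2s)/2 > 0$), the desired bound reduces to the elementary inequality $N/(N-2s) \leq (N+\tau_+)/(-\tau_+)$, which holds throughout $\tau_+ \in [(2s-N)/2,0)$.

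On $\Omega_2$ the weight $|x|^{\tau_+}$ and the Hardy potential $\mu|x|^{-2s}$ are both bounded above and below by positive constants depending on $r_0$ and $\diam\Omega$, so the weighted Marcinkiewicz norm is equivalent to the unweighted one and $\CL_\mu^s$ acts as a bounded perturbation of $(-\Delta)^s$. My preferred route is via a Green function representation $u(x) = \int G_\mu^\Omega(x,y)\, d\nu(y)$ together with an off-diagonal bound $G_\mu^\Omega(x,y) \leq C|x-y|^{2s-N}$ valid for $(x,y) \in \Omega_2 \times \supp\nu$ (where the Hardy singularity plays no role), from which the classical weak-type estimate for $(N-2s)$-Riesz potentials of bounded measures delivers the desired inequality. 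As an alternative, one can decompose $u = u^{(1)} + u^{(2)}$ with $u^{(1)}$ absorbing the Hardy term on $\Omega_2$ and $u^{(2)}$ solving a pure fractional Poisson equation, and iterate using the $L^1$ control from Theorem \ref{existence3-dirac} combined with the classical Marcinkiewicz estimate for $((-\Delta)^s)^{-1}\nu$.

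The delicate point will be rigorously justifying the off-diagonal Green-function bound on $\Omega_2 \times \supp\nu$: although $\mu|x|^{-2s}$ is bounded there, the global kernel $G_\mu^\Omega$ is still shaped by the Hardy singularity, so one must either invoke existing kernel estimates for $\CL_\mu^s$ or proceed via the decomposition just described. Once this step is in place, combining the two contributions yields \eqref{Marcin-2}.
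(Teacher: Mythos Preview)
Your decomposition and the treatment on $\Omega_1$ match the paper exactly: the pointwise bound from Theorem~\ref{est1}, the inclusion $\{|u|>\lambda\}\cap\Omega_1 \subset B_{(\lambda/(C\|\nu\|))^{1/\tau_+}}(0)$ when $\tau_+<0$, and the inequality $(N+\tau_+)/(-\tau_+) \geq N/(N-2s)$ are precisely what the paper uses.

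For $\Omega_2$, however, the paper takes a different and more self-contained route that avoids Green-function bounds entirely. It proves a separate level-set estimate (Lemma~\ref{est2}): after approximating $\nu$ by smooth data, one sets $v = |x|^{-\tau_+}u \in H_0^s(\Omega;|x|^{\tau_+})$, uses the truncation $v_\lambda = \max\{-\lambda, \min\{v,\lambda\}\}$ as a test function in the weighted weak formulation to obtain the energy bound $\langle v_\lambda, v_\lambda\rangle_{s,\tau_+} \leq \lambda\int_\Omega |x|^{\tau_+}|\nu|\,dx$, and then applies the fractional Hardy--Sobolev inequality \eqref{subcritsobolev1}--\eqref{critsobolev1}. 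This Stampacchia-type argument directly yields $|\{x\in\Omega\setminus B_r(0): |u|>\lambda\}| \leq C\lambda^{-N/(N-2s)}\|\nu\|^{N/(N-2s)}$ when $\mu<0$ (and a global bound when $\mu\geq 0$), from which the weighted estimate on $\Omega_2$ follows since $|x|^{\tau_+}$ is bounded there.

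Your Green-function proposal is not wrong in spirit, but as you yourself flag, the required off-diagonal bound $G_\mu^\Omega(x,y) \leq C|x-y|^{2s-N}$ is not established anywhere in the paper's framework, and your alternative decomposition $u = u^{(1)}+u^{(2)}$ is too vague to constitute a proof---it is not clear in what sense $u^{(1)}$ ``absorbs the Hardy term on $\Omega_2$'' while remaining amenable to estimates. The paper's truncation-plus-Sobolev argument is both simpler and complete within the tools already developed; it is what closes the gap you identified.
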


To prove Proposition \ref{Marcin-3}, we need the following estimates on the level sets of $u$.

\begin{lemma}\label{est2}
Assume $\xm\geq\xm_0$, $\xn\in\mathfrak{M}(\Omega\setminus\{0\};|x|^{\tau_+})$  and  let $u$ be the unique solution of \eqref{veryweaksolutionmesure}.

(i) If $\mu_0\leq \mu<0$ then, for any $\lambda>1$ and any $r>0$ such that $B_{4r}(0)\subset\xO,$ there holds
\ba \label{u>k-1}
|\{x\in\xO\setminus B_r(0):\;|u(x)|>\lambda\}|\leq C(N,\xO,s,\xm,r)\lambda^{-\frac{N}{N-2s}}\| \nu \|_{\GTM(\Omega \setminus \{0\};|x|^{\tau_+})}^{\frac{N}{N-2s} }.
\ea

(ii) If $\mu_0\geq 0$ then, for any $\lambda>1$,
\ba \label{u>k-1+}
|\{x\in\xO:\;|u(x)|>\lambda\}|\leq { C(N,s,\xm)}\lambda^{-\frac{N}{N-2s}}\| \nu \|_{\GTM(\Omega \setminus \{0\};|x|^{\tau_+})}^{\frac{N}{N-2s} }.
\ea
\end{lemma}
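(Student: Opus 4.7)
The strategy combines an approximation argument with Kato's inequality \eqref{Kato||-1} and kernel-type estimates for $\mathcal{L}_\mu^s$. I would first regularize: choose a sequence $\{\nu_n\} \subset C_0^\infty(\Omega \setminus \{0\})$ with
\[
\|\nu_n\|_{\mathfrak{M}(\Omega \setminus \{0\};|x|^{\tau_+})} \leq \|\nu\|_{\mathfrak{M}(\Omega \setminus \{0\};|x|^{\tau_+})}
\]
and $\nu_n \to \nu$ weakly as in \eqref{fngn}, and denote by $u_n$ the weak solutions of \eqref{veryweaksolutionmesure} with datum $\nu_n$ furnished by Theorem \ref{existence3-dirac}. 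Then $u_n \to u$ in $L^1(\Omega;|x|^{-b})$ for every $b<2s-\tau_+$ and, after extraction, a.e.\ in $\Omega$. Fatou's lemma then reduces both \eqref{u>k-1} and \eqref{u>k-1+} to the analogous level-set bounds for $u_n$ with constants independent of $n$.

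\medskip

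\noindent For part (ii), $\mu\geq 0$ forces $\tau_+\geq 0$, and the Hardy term $\mu|x|^{-2s}|u_n|\geq 0$ behaves as a positive perturbation of $(-\Delta)^s$. Let $v_n\geq 0$ be the weak solution of $(-\Delta)^s v_n = |\nu_n|$ in $\Omega$ with $v_n=0$ outside $\Omega$. Applying Kato's inequality \eqref{Kato||-1} in turn to $\mathcal{L}_\mu^s$ and to $(-\Delta)^s$ within the weighted distributional framework of \cite{CGN} yields $|u_n|\leq v_n$ a.e.\ in $\Omega$, and \eqref{u>k-1+} follows from the classical weak-$L^{N/(N-2s)}$ Marcinkiewicz estimate for the fractional Poisson problem with measure data (see e.g.\ \cite{CV}). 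When $\mu>0$ the weighted measure space is strictly larger than $\mathfrak{M}(\Omega)$, so the comparison has to be carried out at the level of the Green's kernel $G_\mu^s$ of $\mathcal{L}_\mu^s$: the bound $|y|^{-\tau_+}G_\mu^s(x,y)\leq C|x-y|^{-(N-2s)}$ (encoding the improved decay forced by the positive Hardy term) replaces the direct comparison.

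\medskip

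\noindent For part (i), the sign of the Hardy term prevents a direct comparison, so I would decompose $\nu_n=\nu_n^{(1)}+\nu_n^{(2)}$ with $\nu_n^{(1)}:=\mathbf{1}_{B_{r/2}(0)}\nu_n$ and $\nu_n^{(2)}:=\mathbf{1}_{\Omega\setminus B_{r/2}(0)}\nu_n$, and write $u_n=u_n^{(1)}+u_n^{(2)}$ by linearity (Theorem \ref{existence3-dirac}). The component $u_n^{(2)}$ has source bounded away from the origin, where $|x|^{\tau_+}$ is bounded below by a constant depending on $r$; thus $\nu_n^{(2)}$ is a finite Radon measure on $\Omega$ of mass comparable to $\|\nu_n^{(2)}\|_{\mathfrak{M}(\Omega\setminus\{0\};|x|^{\tau_+})}$, and rewriting the equation as $(-\Delta)^s u_n^{(2)} = \nu_n^{(2)} - \mu|x|^{-2s}u_n^{(2)}$ and treating the Hardy term as an $L^1$-perturbation (controlled via the $L^1(\Omega;|x|^{-b})$ bound of Theorem \ref{existence3-dirac}) reduces the level-set estimate for $u_n^{(2)}$ to the classical case. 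For $u_n^{(1)}$, whose datum is concentrated in $B_{r/2}(0)\setminus\{0\}$, one has $\mathcal{L}_\mu^s u_n^{(1)}=0$ on $\Omega\setminus B_{r/2}(0)$ with bounded Hardy potential; adapting the Caccioppoli--Moser iteration developed in the proof of Theorem \ref{est1}—but now localized at an arbitrary interior point of $\Omega\setminus B_r(0)$ rather than at the origin—together with Theorem \ref{existence3-dirac} delivers a uniform $L^\infty$-bound $\|u_n^{(1)}\|_{L^\infty(\Omega\setminus B_r(0))}\leq C(N,\Omega,s,\mu,r)\|\nu_n\|_{\mathfrak{M}(\Omega\setminus\{0\};|x|^{\tau_+})}$, which trivially implies \eqref{u>k-1}.

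\medskip

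\noindent The principal technical obstacle is the comparison step in part (ii) for $\mu>0$, where the weighted measure norm is strictly smaller than the total variation norm, and the argument must genuinely be expressed in terms of the Hardy-modified Green kernel rather than the pure fractional Laplacian. A secondary but nontrivial point, specific to part (i), is to verify that the Caccioppoli and tail estimates underlying Theorem \ref{est1} can be localized around any point of $\Omega\setminus B_r(0)$ with constants depending only on $r$ and not on the specific center.
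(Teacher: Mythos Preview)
Your approach is genuinely different from the paper's, and considerably more involved. The paper does not use comparison with the pure fractional Laplacian, Green kernel bounds, or a near/far decomposition of the datum. Instead it works directly in the weighted variational framework: after the same smooth approximation step, one sets $v=|x|^{-\tau_+}u\in H^s_0(\Omega;|x|^{\tau_+})$, tests the equation against the truncation $v_\lambda=\max\{-\lambda,\min\{v,\lambda\}\}$, and uses the elementary pointwise inequality $(v(x)-v(y))(v_\lambda(x)-v_\lambda(y))\geq (v_\lambda(x)-v_\lambda(y))^2$ to obtain the energy bound
\[
\langle v_\lambda,v_\lambda\rangle_{s,\tau_+}\leq \lambda\int_\Omega |x|^{\tau_+}|\nu|\,dx.
\]
Both level-set estimates then drop out of the weighted Sobolev embeddings \eqref{subcritsobolev1}--\eqref{critsobolev1}, with the choice $\lambda_1=\lambda r^{-\tau_+}$ in case~(i) and $\lambda_2=(\sup_\Omega|x|)^{-\tau_+}\lambda$ in case~(ii). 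This is the classical Stampacchia truncation argument transported to the space $H^s_0(\Omega;|x|^{\tau_+})$, and it handles the two sign regimes of $\mu$ uniformly in three lines.

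Your route has a real gap precisely where you flag it: for $\mu>0$ the comparison $|u_n|\leq v_n$ with the $(-\Delta)^s$-solution yields a bound in terms of the unweighted total variation $\|\nu_n\|_{\mathfrak M(\Omega)}$, whereas the statement requires the (possibly much smaller) weighted norm $\|\nu_n\|_{\mathfrak M(\Omega\setminus\{0\};|x|^{\tau_+})}$. Bridging this forces you to invoke the pointwise Green kernel estimate $|y|^{-\tau_+}G_\mu^s(x,y)\leq C|x-y|^{-(N-2s)}$, which is not proved in the paper and is a nontrivial result in its own right. For part~(i), your decomposition plus Moser iteration could in principle be carried out, but it is substantially heavier than what is needed; in particular the step ``treat $\mu|x|^{-2s}u_n^{(2)}$ as an $L^1$ perturbation'' requires first knowing $|x|^{-2s}u_n^{(2)}\in L^1(\Omega)$, which already presupposes the behaviour $|u_n^{(2)}|\lesssim |x|^{\tau_+}$ near the origin from Theorem~\ref{est1}. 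The paper's truncation argument bypasses all of this.
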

\begin{proof}[\textbf{Proof}]
In view of the proof of Theorem \ref{est1}, we may assume that $\xn\in C_0^\infty(\xO).$ Then $v=|x|^{-\tau_+}u \in H^{s}_0(\xO;|x|^{\tau_+}) $ and satisfies
\bal
\langle v,\phi \rangle_{s,\tau_+}
=\int_{\xO}\xf(x)|x|^{\tau_+}\xn(x)dx, \quad  \forall \xf\in C_0^\infty(B_{\frac{r}{2}}(0)).
\eal
Let $\lambda>0,$ taking $v_\lambda=\max\{-\lambda,\min \{v,\lambda\}\}$ as a test function, we have
\bal
\langle v,v_\lambda \rangle_{s,\tau_+}
&=\int_{\xO} v_\lambda(x)|x|^{\tau_+} \xn(x)dx\leq \lambda\int_{\xO}|x|^{\tau_+} |\xn(x)|dx.
\eal
We see that
\bal
(v(x)-v(y))(v_\lambda(x)-v_\lambda(y))\geq (v_\lambda(x)-v_\lambda(y))^2, \quad \forall x,y \in \R^N.
\eal
Hence from the  two proceeding inequalities, we obtain
\ba\label{101}
\langle v_\lambda,v_\lambda \rangle_{s,\tau_+}
\leq \lambda \int_{\xO}|x|^{\tau_+} |\xn(x)|dx.
\ea

(i) Assume $\xm_0\leq\xm<0$. In this case $\frac{2s-N}{2} \leq \tau_+ <0$. Let $r>0$ is small enough such that $B_{4r}(0) \subset \Omega$ and  set $\xl_1=\lambda r^{-\tau_+}$. Then by  \eqref{subcritsobolev1}, \eqref{critsobolev1} and \eqref{101}, we have
\bal
|\{x\in\xO \setminus B_r(0):\;|u(x)|\geq \lambda\}|&\leq |\{x\in\xO\setminus B_r(0):\;|v_{\xl_1}(x)|\geq \xl_1\}\\
&\leq C(N,s,\mu,r)\lambda^{-\frac{2N}{N-2s}}\int_{\xO\setminus B_r(0)}|v_{\xl_1}|^{\frac{2N}{N-2s}} dx\\
&\leq C(N,s,\mu,\xO,r)\lambda^{-\frac{2N}{N-2s}}\int_{\xO}|v_{\xl_1}|^{\frac{2N}{N-2s}} |x|^{\tau_+} dx\\
&\leq C(N,s,\mu,\xO,r)\lambda^{-\frac{2N}{N-2s}}\langle v_{\lambda_1},v_{\lambda_1} \rangle_{s,\tau_+}^{\frac{N}{N-2s}}\\
&\leq C(N,s,\mu,\xO,r)\lambda^{-\frac{N}{N-2s}}\left(\int_{\xO\setminus \{0\}}|x|^{\tau_+}|\xn(x)|dx\right)^{\frac{N}{N-2s}},
\eal
which implies \eqref{u>k-1}.

(ii) Assume  $\xm\geq 0$. In this case $\tau_+ \geq 0$.  Set $\xl_2=(\sup_{x\in\Omega}|x|)^{-\tau_+}\xl$, then by \eqref{subcritsobolev1} and \eqref{101}, we have
\bal
|\{x\in\xO:\;|u(x)|\geq \lambda\}|&\leq C(N,s,\mu)\lambda^{-\frac{2N}{N-2s}}\int_{\{x\in\xO :\;|v(x)|\geq \lambda|x|^{-\tau_+}\}}\big(|x|^{\tau_+}|v|\big)^{\frac{2N}{N-2s}} dx \\
&\leq  C(N,s,\mu)\lambda^{-\frac{2N}{N-2s}}\int_{\xO}\big(|x|^{\tau_+}|v_{\xl_2}|\big)^{\frac{2N}{N-2s}} dx \\
&\leq { C(N,s,\mu)}\lambda^{-\frac{2N}{N-2s}}\langle v_{\lambda_2},v_{\lambda_2} \rangle_{s,\tau_+}^{\frac{N}{N-2s}}\\
&\leq { C(N,s,\mu)}\lambda^{-\frac{N}{N-2s}}\left(\int_{\xO\setminus \{0\}}|x|^{\tau_+}|\xn(x)|dx\right)^{\frac{N}{N-2s}},
\eal
which implies \eqref{u>k-1+}.
\end{proof}

Now we are ready to prove Proposition \ref{Marcin-3}.

\begin{proof}[\textbf{Proof of Proposition \ref{Marcin-3}}] By the linearity, we may assume that $\| \nu \|_{\GTM(\Omega \setminus \{0\};|x|^{\tau_+})}=1$.\smallskip
	
\noindent \textit{Case 1: $\mu_0 \leq \mu <0$}. In this case, $\frac{2s-N}{2}\leq \tau_+<0$. Fix $0<r<r_0$ small.

Since $\tau_+<0$, by Theorem \ref{est1}, we have { (noting that $\| \nu \|_{\GTM(\Omega \setminus \{0\};|x|^{\tau_+})}=1$)
\bal
|u(x)| \leq C_1 |x|^{\tau_+},\quad\forall x \in B_r(0) \setminus \{0\},
\eal
where $C_1=C_1(N,\xO,s,\xm,r)$.
It follows that
\bal
 \{x \in B_r(0) \setminus \{0\}: |u(x)|>\lambda\}\subset B_{R_1}(0)
\eal
where
\bal
R_1=\left(C_1\xl^{-1}\right)^{-\frac{1}{\tau_+}}.
\eal
Hence,
\bal
\int_{\{ x \in B_r(0) \setminus \{0\}: |u(x)|> \lambda \}} |x|^{\tau_+}dx &\leq \int_{B_{R_1}(0)}|x|^{\tau_+}dx =C(N,\Omega,s,\mu,r) \xl^{\frac{N+\tau_+}{\tau_+} }.
\eal}
Since $\tau_+\geq \frac{2s-N}{2}$, we have $-\frac{N+\tau_+}{\tau_+}  \geq \frac{N}{N-2s}$.  From the above estimate, we can easily deduce that, {for any $\lambda>1$},
\ba \label{Br-1}
\int_{\{ x \in B_r(0) \setminus \{0\}: |u(x)|> \lambda \}} |x|^{\tau_+}dx \leq C(N,\Omega,s,\mu,r){\lambda^{-\frac{N}{N-2s} }}.
\ea

Next by (\ref{u>k-1}) and since $\tau_+<0$, we have, {for any $\lambda>1$},
\ba \label{Br-2} \BAL
\int_{\{ x \in \Omega \setminus B_r(0): |u(x)|> \lambda \}} |x|^{\tau_+}dx  \leq r^{\tau_+}\big|\{ x \in \Omega \setminus B_r(0): |u(x)|> \lambda    \}\big|  
 \leq C(N,\xO,s,\xm,r) {\lambda^{-\frac{N}{N-2s} }.}
\EAL \ea
By combining \eqref{Br-1} and \eqref{Br-2}, we obtain,  for any $\lambda>1$, that
\ba  \label{Marcin-2a}
\int_{\{ x \in \Omega \setminus \{0\}: |u(x)|> \lambda \}} |x|^{\tau_+}dx \leq C(N,\xO,s,\xm,r)  \lambda^{-\frac{N}{N-2s} }. 
\ea
Then we can easily show that \eqref{Br-1} holds for any $\lambda>0$.

\noindent \textit{Case 2: $\mu \geq 0$}. In this case $\tau_+ \geq0$. Put $v=|x|^{-\tau_+}u$ and $D_{\Omega}=\sup_{x \in \Omega}|x|^{\tau_+}$, then
\bal
\int_{\{x\in\xO \setminus \{0\}:\;|u(x)|\geq \lambda \}}|x|^{\tau_+} dx\leq \int_{\{x\in\xO \setminus \{0\}:\;D_\xO|v(x)|\geq \lambda \}}|x|^{\tau_+} dx.
\eal
This and \eqref{u>k-1+} imply \eqref{Marcin-2a} for any $\lambda>0$.

Finally, estimate \eqref{Marcin-2} follows from \eqref{Marcin-2a}, \eqref{semi} and  \eqref{equinorm}.
\end{proof}

\subsection{Existence and uniqueness} In this subsection, we aim to establish the existence and uniqueness of the weak solution to problem \eqref{eq 1.0} with $\ell=0$.

First, we show the solvability for semilinear problem involving the dual operator $(-\Delta)_{\gamma}^s$ in the variational framework. 

We assume that
 \begin{itemize}
\item[$({\bf h}_1)$] $h \in C(\R^N \setminus \{0\} \times \R) \cap L^\infty (\R^N \times \R)$;
\item[$({\bf h}_2)$] the map $s\mapsto h(\cdot,s)$ is nondecreasing and $h(\cdot,0)=0$ in $\R^N \setminus \{0\}$.
\end{itemize}
  In the sequel, we will use the notations:
  $H(t)=\int_0^th(s)ds$,
   $(h \circ u)(x) = h(x,u(x))$,  $(H \circ u)(x) = H(x,u(x))$ for $x \in \R^N \setminus \{0\}$.
\begin{definition}
A function $u$ is called a \textit{variational solution} to problem
\ba \label{prob:semi-var}
\left\{
\BAL
(-\xD)^s_\gamma u +h \circ u &=f  \quad && \text{in }  \Omega,  \\[1mm]
\qquad\quad u &=0  &&  \text{in } \R^N\setminus \Omega,
\EAL
\right.
\ea
if $u \in \Hsg$ and
\bal 
\langle u, \xi \rangle_{s,\gamma} + (h \circ u,\xi)_{\gamma} = (f,\xi)_{\gamma}, \quad\forall\, \xi \in \Hsg.
\eal
\end{definition}
\begin{proposition} \label{exist-semi-varsol}
Assume $\xg\in[ \frac{2s-N}{2}, 2s)$,  $\xa\in\R$,     $f \in L^p(\Omega;|x|^{\alpha})$
with $p$ satisfying \eqref{exs ex-0}
 and  $h$ satisfies $({\bf h}_1)$ and $({\bf h}_2)$. Then problem \eqref{prob:semi-var} admits a unique variational solution.
\end{proposition}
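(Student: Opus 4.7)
The plan is to obtain the variational solution as the unique minimizer of an associated energy functional, via the direct method in the Hilbert space $H_0^s(\Omega;|x|^\gamma)$. Set
\[
J(u):=\tfrac{1}{2}\|u\|_{s,\gamma}^2 + \int_\Omega H(x,u(x))\,|x|^\gamma\,dx - (f,u)_\gamma, \qquad u\in H_0^s(\Omega;|x|^\gamma),
\]
where $H(x,t)=\int_0^t h(x,r)\,dr$. First I would check that $J$ is well defined and finite on $H_0^s(\Omega;|x|^\gamma)$. Assumption $({\bf h}_1)$ together with $({\bf h}_2)$ yields $0\le H(x,t)\le \|h\|_\infty |t|$, so the second term is controlled by $\|h\|_\infty\int_\Omega|u||x|^\gamma\,dx$, which is finite by Theorem \ref{th:main-1}(iii). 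That the linear term $(f,\cdot)_\gamma$ is continuous on $H_0^s(\Omega;|x|^\gamma)$ under hypothesis \eqref{exs ex-0} is exactly the content of the variational theory recalled in Theorem \ref{th:main-2}: by H\"older and the weighted Sobolev embedding in Theorem \ref{th:main-1}(iii),
\[
|(f,v)_\gamma|\le C\,\|f\|_{L^p(\Omega;|x|^\alpha)}\,\|v\|_{s,\gamma}.
\]

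Next I would verify the three structural properties that drive the direct method. \textbf{Coercivity:} since $H\ge 0$, $J(u)\ge \tfrac{1}{2}\|u\|_{s,\gamma}^2 - C\|f\|_{L^p(\Omega;|x|^\alpha)}\|u\|_{s,\gamma}$, so $J(u)\to +\infty$ as $\|u\|_{s,\gamma}\to\infty$. \textbf{Strict convexity:} the quadratic term $\|u\|_{s,\gamma}^2$ is strictly convex, the linear term is affine, and $t\mapsto H(x,t)$ is convex for a.e.\ $x$ because $h(x,\cdot)$ is nondecreasing by $({\bf h}_2)$. \textbf{Weak lower semicontinuity:} the norm squared is weakly l.s.c.; the linear term is weakly continuous; and $u\mapsto\int_\Omega H(x,u)|x|^\gamma dx$ is convex and strongly continuous (by dominated convergence, using $|H(x,u)|\le\|h\|_\infty|u|$ and the embedding), hence weakly l.s.c. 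The direct method then provides a minimizer $u\in H_0^s(\Omega;|x|^\gamma)$, unique by strict convexity.

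Finally I would derive the Euler–Lagrange equation. For any $\xi\in H_0^s(\Omega;|x|^\gamma)$ and $t\in\R$, continuity of $h(x,\cdot)$, the uniform bound $|h|\le\|h\|_\infty$ and dominated convergence yield
\[
\lim_{t\to 0}\frac{J(u+t\xi)-J(u)}{t}=\langle u,\xi\rangle_{s,\gamma}+(h\circ u,\xi)_\gamma-(f,\xi)_\gamma,
\]
the integrand $h(x,u)\xi|x|^\gamma$ being absolutely integrable by the same embedding argument as above. Setting this derivative to zero delivers the required variational identity. Uniqueness also follows directly from monotonicity: if $u_1,u_2$ are two solutions, subtracting and testing with $u_1-u_2$ gives $\|u_1-u_2\|_{s,\gamma}^2 + \int_\Omega (h(x,u_1)-h(x,u_2))(u_1-u_2)|x|^\gamma dx =0$, and both terms are nonnegative, forcing $u_1=u_2$.

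The main obstacle is not a conceptual one but rather the bookkeeping of the weighted integrability: one must continually invoke Theorem \ref{th:main-1}(iii) and the sharp condition \eqref{exs ex-0} to guarantee that every pairing $(f,\xi)_\gamma$ and $(h\circ u,\xi)_\gamma$ defines a bounded linear functional on $H_0^s(\Omega;|x|^\gamma)$; once these continuity statements are in place the monotone–convex structure makes the argument routine. Note that as an alternative one could invoke the Browder–Minty theorem directly on the monotone, coercive, hemicontinuous operator $T u:=\langle u,\cdot\rangle_{s,\gamma}+(h\circ u,\cdot)_\gamma$; strict monotonicity again gives uniqueness.
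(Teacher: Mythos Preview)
Your proposal is correct and follows essentially the same approach as the paper: both minimize the energy functional $J(u)=\tfrac{1}{2}\|u\|_{s,\gamma}^2+\int_\Omega H(x,u)|x|^\gamma dx-(f,u)_\gamma$ via the direct method, using $H\ge 0$ for coercivity and the weighted embedding of Theorem~\ref{th:main-1}(iii) together with \eqref{exs ex-0} for continuity of the linear term. The paper is much terser (it simply refers back to the proof of Theorem~\ref{th:main-2}) and invokes the Kato inequality \eqref{kato1} for uniqueness, whereas your direct monotonicity test with $u_1-u_2$ is an equivalent and arguably cleaner route here.
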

\begin{proof}[\textbf{Proof}] Under the assumptions $({\bf h}_1)$ and $({\bf h}_2)$, we consider the functional
	\bal
	\scJ(\varphi) := \frac{1}{2}\| \varphi \|_{s,\gamma}^2 + \int_{\Omega}(H \circ u) |x|^{\gamma} dx  - \int_{\Omega} f u |x|^\gamma dx, \quad \varphi \in \Hsg.
	\eal
	Since $H$ is a nonnegative function, by proceeding similarly as in the proof of Theorem \ref{th:main-2}, we can show that $\scJ$ is coercive and semi-continuous in $\Hsg$. Thus $\scJ$ has a critical point $v \in \Hsg$, which is a variational solution of \eqref{prob:semi-var}. The uniqueness follows from Kato type inequality \eqref{kato1}.
\end{proof}

\begin{theorem}\label{existence4}
Assume $\xm_0\leq\xm$, $\xn_i\in\mathfrak{M}^+(\Omega\setminus\{0\};|x|^{\tau_+})$,  $i=1,2$, and $g\in  C(\R)\cap L^\infty(\R)$ is a nondecreasing function such that $g(0)=0$. Then there exist unique weak solutions $u,u_1,u_2,v_1,v_2$ of the following problems respectively
\ba \label{prob:gu} \left\{ \BAL
\CL_\xm^s u+g(u)&=\xn_1-\xn_2&&\quad\text{in}\;\, \xO,\\
u&=0&&\quad\text{in}\;\,\BBR^N\setminus\xO,
\EAL \right.
\ea
\ba \label{prob:gu1} \left\{ \BAL
\CL_\xm^s u_1+g(u_1)&=\xn_1&&\quad\text{in}\;\, \xO,\\
u_1&=0&&\quad\text{in}\;\, \BBR^N\setminus\xO,
\EAL \right.
\ea
\ba \label{prob:gu2} \left\{ \BAL
\CL_\xm^s u_2-g(-u_2)&=\xn_2&&\quad\text{in}\;\, \xO,\\
u_2&=0&&\quad\text{in}\;\, \BBR^N\setminus\xO,
\EAL \right.
\ea
and
\ba \label{linear-vi} \left\{ \BAL
\CL_\xm^s v_i&=\xn_i&&\quad\text{in}\;\, \xO,\\
v_i&=0&&\quad\text{in}\;\, \BBR^N\setminus\xO,
\EAL \right.
\ea
such that
\ba \label{order-1}
-v_2\leq-u_2\leq u\leq u_1\leq v_1 \quad \text{in } \Omega \setminus \{0\}.
\ea
\end{theorem}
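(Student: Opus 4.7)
The plan is to construct the four solutions by approximation and deduce the ordering and uniqueness from Kato's inequality \eqref{Kato:+-1}. For the linear problems \eqref{linear-vi}, Theorem \ref{existence3-dirac} directly yields the unique weak solutions $v_1,v_2$, and since $\nu_i\geq 0$, the nondecreasing character of the mapping in that theorem gives $v_i\geq 0$.

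For the semilinear problem \eqref{prob:gu1}, I would regularize $\nu_1$ by a mollification yielding smooth nonnegative data $\nu_{1,n}\in C_0^\infty(\Omega)$, vanishing near $\{0\}$ and $\partial\Omega$, with uniformly bounded mass in $\GTM(\Omega\setminus\{0\};|x|^{\tau_+})$ and $\nu_{1,n}\to\nu_1$ weakly. Writing $u_{1,n}=|x|^{\tau_+}w_n$ in accordance with Theorem \ref{th:main-1}(ii), the approximate semilinear equation recasts into the variational form
\begin{equation*}
\langle w_n,\xi\rangle_{s,\tau_+}+\int_\Omega g(|x|^{\tau_+}w_n)\,\xi\,|x|^{\tau_+}\,dx=\int_\Omega \nu_{1,n}\,\xi\,|x|^{\tau_+}\,dx,\qquad\forall\,\xi\in H^s_0(\Omega;|x|^{\tau_+}).
\end{equation*}
Here $h(x,t):=g(|x|^{\tau_+}t)$ satisfies $({\bf h}_1)$ and $({\bf h}_2)$ because $g$ is continuous, bounded, nondecreasing with $g(0)=0$, so Proposition \ref{exist-semi-varsol} supplies a unique variational solution $w_n$. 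Testing against $w_n^-$ via the Kato inequality \eqref{kato1}, together with $\nu_{1,n}\geq 0$, forces $w_n\geq 0$, hence $u_{1,n}\geq 0$. Then, viewing the equation as $\CL_\mu^s u_{1,n}=\nu_{1,n}-g(u_{1,n})$ with right-hand side uniformly bounded in $L^1(\Omega;d^s|x|^{\tau_+})$ (thanks to $\|g\|_{L^\infty}<\infty$), Theorem \ref{existence2} yields a uniform bound on $\|u_{1,n}\|_{L^1(\Omega;|x|^{-b})}$ for every $b<2s-\tau_+$.

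To pass to the limit I would extract an a.e.\ convergent subsequence of $\{u_{1,n}\}$ from the compactness inherent in the linear theory of Section 2, combined with the extra regularity conferred by the bounded lower-order term; dominated convergence (via boundedness and continuity of $g$ and integrability of $d^s|x|^{\tau_+}$ on $\Omega$) then shows that $g(u_{1,n})\to g(u_1)$ in $L^1(\Omega;|x|^{\tau_+})$, and the weak formulation \eqref{sol:g(u)-kdiracdef} passes to the limit, producing the weak solution $u_1$ of \eqref{prob:gu1}. Problems \eqref{prob:gu2} and \eqref{prob:gu} are handled identically, using the nonlinearity $-g(-\cdot)$ for \eqref{prob:gu2} (where one shows $u_2\geq 0$ analogously) and the signed data $\nu_1-\nu_2$ for \eqref{prob:gu}.

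Finally, every inequality in \eqref{order-1} and the uniqueness of each solution follow from Kato's inequality \eqref{Kato:+-1} applied to appropriate differences. For example, subtracting the equations for $u_1$ and $u$ gives $\CL_\mu^s(u-u_1)+g(u)-g(u_1)=-\nu_2\leq 0$; applying \eqref{Kato:+-1} to $(u-u_1)^+$ and using that $(g(u)-g(u_1))\sign^+(u-u_1)\geq 0$ by monotonicity of $g$ forces $(u-u_1)^+=0$, i.e.\ $u\leq u_1$. The inequalities $u_1\leq v_1$, $-u_2\leq u$, $-v_2\leq -u_2$, as well as uniqueness of each solution, are proved by the same Kato argument on suitable differences. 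The chief technical hurdle is securing the a.e.\ convergence required to pass $g(u_{1,n})$ to the limit: a uniform $L^1$ bound alone is insufficient, but the linear compactness of Section 2 combined with the boundedness of $g$ supplies enough regularity to extract the needed subsequence, so that no subcritical integral condition on $g$ is necessary.
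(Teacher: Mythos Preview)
Your proposal is essentially correct and follows the same strategy as the paper: approximate the data by smooth compactly supported functions, solve the regularized problems via Proposition~\ref{exist-semi-varsol} in the weighted variational framework (with $h(x,t)=g(|x|^{\tau_+}t)$), pass to the limit using boundedness of $g$ and dominated convergence, and deduce both the ordering \eqref{order-1} and uniqueness from Kato's inequality.

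Two minor points where the paper is more explicit. First, the paper separates the construction into two stages: Step~1 treats measures $\nu_i$ with compact support in $\Omega\setminus\{0\}$ (so that mollification directly yields $C_0^\infty(\Omega\setminus\{0\})$ data), and Step~2 removes this assumption by taking a smooth exhaustion $\{O_l\}$ of $\Omega\setminus\{0\}$, setting $\nu_{i,l}=\mathbf{1}_{\overline O_l}\nu_i$, and showing via Kato's inequality that the resulting solutions form a Cauchy sequence in $L^1(\Omega;|x|^{-b})$. Your single-step mollification implicitly combines truncation and smoothing, which is fine, but the paper's Cauchy-sequence argument in Step~2 is what actually secures the strong $L^1$ convergence without appealing to an abstract compactness principle. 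Second, the paper establishes the ordering \eqref{order-1} already at the variational level (via \eqref{kato1}) and then passes it to the limit, whereas you derive it afterwards at the weak-solution level via \eqref{Kato:+-1}; both routes are valid.
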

\begin{proof}[\textbf{Proof}]
\noindent {\sc Existence. }   \medskip

\noindent \textbf{Step 1.} First we assume that $\xn_i$, $i=1,2$, has compact support in $\xO\setminus\{0\}$.
Let $\{\xz_\delta\}_{\delta>0}$ be the sequence of standard mollifiers. Put $\xn_{i,\delta}=\xz_\delta*\xn_i$ then there exists an open set $D\Subset\xO\setminus\{0\}$ such that $\xn_{i,\delta} \in C^\infty_0(D)$  for $\delta>0$ small enough. Then
\ba \label{nuntonu}
\int_{\xO}  d(x)^s |x|^{\tau_+}\xn_{\delta} dx\to \int_{\xO\setminus\{0\}}  d(x)^s |x|^{\tau_+}d\xn \quad \text{as } \delta \to 0
\ea
and
\ba \label{nun<nu}
\| \xn_{i,\delta} \|_{L^1(\Omega;|x|^{\tau_+})} \leq c \| \nu_i \|_{\mathfrak{M}(\Omega\setminus\{0\}; d(x)^s |x|^{\tau_+})}, \quad \forall \delta > 0.
\ea

Define the function $h: \R^N \setminus \{0\} \times \R \to \R$ by $h(x,t)=g(|x|^{\tau_+} t)$ for $x \in \R^N \setminus \{0\}$, $t \in \R$. By Proposition \ref{exist-semi-varsol}, there exist variational solutions $\tilde u_{\delta},\tilde u_{1,\delta},\tilde u_{2,\delta},\tilde v_{1,\delta},\tilde v_{2,\delta}\in H_0^s(\Omega;|x|^{\tau_+})$ of the following problems respectively
\bal \left\{  \BAL
(-\Delta)_{\tau_+}^s \tilde u_{\delta}+h \circ \tilde u_{\delta}&=\xn_{1,\delta}-\xn_{2,\delta}&&\quad\text{in}\;\xO,\\
\tilde u_{\delta}&=0&&\quad\text{in}\;\BBR^N\setminus\xO,
\EAL \right.
\eal
\bal \left\{  \BAL
(-\Delta)_{\tau_+}^s \tilde u_{1,\delta}+h \circ \tilde u_{1,\delta}&=\xn_{1,\delta}&&\quad\text{in}\;\xO,\\
\tilde u_{1,\delta}&=0&&\quad\text{in}\;\BBR^N\setminus\xO,
\EAL \right.
\eal
\bal \left\{ \BAL
(-\Delta)_{\tau_+}^s \tilde u_{2,\delta}-h \circ (-\tilde u_{2,\delta})&=\xn_{2,\delta}&&\quad\text{in}\;\xO,\\
\tilde u_{2,\delta}&=0&&\quad\text{in}\;\BBR^N\setminus\xO,
\EAL \right.
\eal
and
\bal \left\{ \BAL
(-\Delta)_{\tau_+}^s \tilde v_{i,\delta}&=\xn_{i,\delta}&&\quad\text{in}\;\xO,\\
\tilde v_{i,\delta}&=0&&\quad\text{in}\;\BBR^N\setminus\xO.
\EAL \right.
\eal
We infer from Kato type inequality \eqref{kato1} that $\tilde u_{i,\delta} \geq 0$, $\tilde v_{i,\delta} \geq 0$, $i=1,2$, and
\bal
-\tilde v_{2,\delta}\leq-\tilde u_{2,\delta}\leq \tilde u_{n}\leq \tilde u_{1,\delta}\leq \tilde v_{1,\delta} \quad \text{a.e. in } \Omega \setminus \{0\}.
\eal
Put $u_\delta = |x|^{\tau_+}\tilde u_\delta$, $u_{i,\delta}=|x|^{\tau_+}\tilde u_\delta$ and $v_{i,n}=|x|^{\tau_+} \tilde v_{i,\delta}$,
then $u_{i,\delta} \geq 0$, $v_{i,\delta} \geq 0$ and
\ba \label{uivi-1}
-v_{2,\delta}\leq-u_{2,\delta}\leq u_{\delta}\leq  u_{1,\delta}\leq  v_{1,\delta} \quad \text{a.e. in } \Omega \setminus \{0\}.
\ea
This implies that
\bal
 |u_\delta| \leq u_{1,\delta} + u_{2,\delta} \leq v_{1,\delta} + v_{2,\delta} \quad \text{a.e. in } \Omega \setminus \{0\}.
\eal
Moreover, since $g(u_\delta(x)) = (
h \circ\tilde u_\delta)(x)$ and $g(u_{i,\delta}(x)) = (h \circ\tilde u_{i,\delta})(x)$, $i=1,2$, it follows that
\bal 
|g(u_\delta)| \leq g(u_{1,\delta}) - g(- u_{2,\delta}) \quad \text{a.e. in } \Omega \setminus \{0\}.
\eal
From Theorem \ref{existence2}, we find that $u_\delta$, $u_{1,\delta}$, $u_{2,\delta}$, $v_{1,\delta}$ and $v_{2,\delta}$ are respectively weak solutions to the following problems
\ba \label{gun} \left\{  \BAL
\CL_\mu^s u_{\delta}+g( u_{\delta})&=\xn_{1,\delta}-\xn_{2,\delta}&&\quad\text{in}\;\xO,\\
 u_{\delta}&=0&&\quad\text{in}\;\BBR^N\setminus\xO,
\EAL \right.
\ea
\ba \label{guin} \left\{  \BAL
\CL_\mu^s u_{1,\delta}+g(u_{1,\delta})&=\xn_{1,\delta}&&\quad\text{in}\;\xO,\\ u_{1,\delta}&=0&&\quad\text{in}\;\BBR^N\setminus\xO,
\EAL \right.
\ea
\ba \label{gvin} \left\{ \BAL
\CL_\mu^s u_{2,\delta}-g(- u_{2,\delta})&=\xn_{2,\delta}&&\quad\text{in}\;\xO,\\
 u_{2,\delta}&=0&&\quad\text{in}\;\BBR^N\setminus\xO,
\EAL \right.
\ea
and
\bal \left\{ \BAL
\CL_\mu^s v_{i,\delta}&=\xn_{i,\delta}&&\quad\text{in}\;\xO,\\
 v_{i,\delta}&=0&&\quad\text{in}\;\BBR^N\setminus\xO.
\EAL \right.
\eal

Since $g \in C(\R) \cap L^\infty(\R)$, we can modify the approximation argument in \cite[Theorem 5.1]{CGN} in order to show that there exist functions $u,u_1,u_2,v_1,v_2$ such that $\{u_\delta\}$, $\{u_{i,\delta}\}$ and $\{v_{i,\delta}\}$ converge to $u$, $u_i$ and $v_i$, $i=1,2$, a.e. in $\Omega \setminus \{0\}$ and in $L^1(\Omega;|x|^{-b})$, for any $b<2s-\tau_+$, respectively  as $\delta \to 0$. From Theorem \ref{existence2}, $v_i$ is the solution to problem \eqref{linear-vi}, $i=1,2$.

Since $g \in C(\R) \cap L^\infty(\R)$, we deduce from the dominated convergence theorem that $\{g(u_\delta)\}$, $\{g(u_{i,\delta})\}$ and $\{g(v_{i,\delta})\}$ convergence to $g(u)$, $g(u_i)$ and $g(v_i)$, $i=1,2$, a.e. in $\Omega \setminus \{0\}$ and in $L^1(\Omega;|x|^{\tau_+})$ respectively as $\delta \to 0$.

Therefore, by passing to the limit in the weak formulation for problems \eqref{gun}--\eqref{gvin}, we derive that
$u$, $u_1$, $u_2$ are solutions to problems \eqref{prob:gu}--\eqref{prob:gu2} respectively.

The uniqueness for problems \eqref{prob:gu}--\eqref{prob:gu2} follows from Kato type inequality \eqref{Kato:+-1} and the monotonicity of $g$. Moreover, from \eqref{uivi-1}, we obtain \eqref{order-1}. \medskip

\noindent \textbf{Step 2.} Next we drop the assumption that $\nu_i$, $i=1,2$, has compact support. Let $\{O_l\}_{l \in \N}$ be a smooth exhaustion of $\Omega \setminus \{0\}$, i.e. smooth open sets $\{O_l\}_{l \in\N}$ such that
\bal
O_l \Subset O_{l+1}\Subset\xO\setminus \{0\}\quad\text{and}\quad\cup_{l \in \N} O_l=\xO\setminus\{0\}.
\eal
Set $\xn_{i,l} = \1_{\overline{O}_l}\xn_i$, $i=1,2$, and let $u_l$, $u_{1,l}$, $u_{2,l}$, $v_{i,l}$ be respectively the nonnegative weak solutions to \eqref{prob:gu}, \eqref{prob:gu1}, \eqref{prob:gu2} and \eqref{linear-vi} with $\nu_i$ replaced by $\nu_{i,l}$, $i=1,2$. Then we have
\ba  \label{u12lvl}
-v_{2,l} \leq -u_{2,l} \leq u_{l} \leq u_{1,l} \leq v_{1,l} \quad \text{a.e. in } \Omega \setminus \{0\}.
\ea

Let $b<2s-\tau_+$, then $u_l \in L^1(\Omega;|x|^{-b})$ and $u_l$ satisfies the weak formulation
\ba \label{ulsol-a} \BAL
\int_\xO u_l (-\xD)^s_{\tau_+}\psi dx + \int_{\Omega}g(u_{l}) \psi |x|^{\tau_+}dx = \int_{\xO\setminus\{0\}}\psi |x|^{\tau_+} d(\xn_{1,l}- \xn_{2,l})
\EAL \ea
for all $0 \leq \psi\in \mathbf{X}_\xm(\xO;|x|^{-b})$.
We know from Step 1 that $u_l$ is the limit of the sequence of $\{u_{l,\delta}\}_{\delta > 0}$, where $u_{l,\delta}$ is the weak solution of
\bal 
\left\{  \BAL
\CL_\mu^s u+g( u)&=\xn_{1,l,\delta}-\xn_{2,l,\delta}&&\quad\text{in}\;\xO,\\
u_{\delta}&=0&&\quad\text{in}\;\BBR^N\setminus\xO,
\EAL \right.
\eal
where $\xn_{i,l,\delta}:=\zeta_\delta \ast \nu_{i,l}$, $i=1,2$. For $l > l'$, since $\nu_{i,l} \geq \nu_{i,l'}$, it follows that $\nu_{i,l,\delta} \geq \nu_{i,l',\delta}$ for $i=1,2$ and any $\delta>0$.

Let $\xi_b$ be the solution of
\ba \label{xib} \left\{  \BAL
\CL_\mu^s \xi &= |x|^{-b} \quad &&\text{in } \Omega, \\
\xi &=0 \quad &&\text{in }  \R^N \setminus \Omega.
\EAL \right. \ea
Then $\xi_b \in \mathbf{X}_\xm(\xO;|x|^{-b})$ and   $|\xi_b| \leq Cd^s$ a.e. in $\Omega$ (see \cite[estimate (1.16)]{CGN}).

By using Kato type inequality \eqref{Kato||-1}  for  $u_{l,\delta}-u_{l',\delta}$ and  $\psi=\xi_b$ as the test function,  we obtain that
\ba \label{ulsol-aa} \BAL
&\int_\xO |u_{l,\delta}-u_{l',\delta}||x|^{-b} dx + \int_{\Omega}|g(u_{l,\delta}) - g(u_{l',\delta})|\xi_b |x|^{\tau_+}dx \\
&\leq  \int_{\xO\setminus\{0\}}\xi_b \sign(u_{l,\delta} - u_{l',\delta}) |x|^{\tau_+} [(\xn_{1,l,\delta}- \nu_{1,l',\delta}) - (\xn_{2,l,\delta}-\nu_{2,l',\delta})]dx \\
&\leq  C\int_{\xO\setminus\{0\}}  |x|^{\tau_+} (\xn_{1,l,\delta}- \nu_{1,l',\delta})dx  +  C\int_{\xO\setminus\{0\}}  |x|^{\tau_+} (\xn_{2,l,\delta}-\nu_{2,l',\delta})dx.
\EAL \ea
We note that $u_{l,\delta}-u_{l',\delta} \to u_{l}-u_{l'}$ in $L^1(\Omega;|x|^{-b})$,  $g(u_{l,\delta}) - g(u_{l',\delta}) \to g(u_{l}) - g(u_{l'})$ in $L^1(\Omega;|x|^{\tau_+})$ as $\delta \to 0$ and for any $l>0$,
\bal
\int_{\Omega \setminus \{0\}}|x|^{\tau_+}\nu_{i,l,\delta}dx
\to \int_{\Omega \setminus \{0\}}|x|^{\tau_+}d\nu_{i,l} \quad \text{as } \delta \to 0.
\eal
Therefore, by letting $\delta \to 0$ in \eqref{ulsol-aa}, we find
\ba \label{ulsol-ab}
\| u_l - u_{l'} \|_{L^1(\Omega;|x|^{-b})} \leq C( \| \xn_{1,l}- \xn_{1,l'}\|_{\GTM(\Omega \setminus \{0\};|x|^{\tau_+})} + \|\nu_{2,l}-\nu_{2,l'}  \|_{\GTM(\Omega \setminus \{0\};|x|^{\tau_+})}).
\ea
Since $\nu_{i,l} \uparrow \nu_{i}$ as $l \to \infty$, we infer from \eqref{ulsol-ab} that $\{u_l\}_{l \in \N}$ is a Cauchy sequence in $L^1(\Omega;|x|^{-b})$. Therefore there exists $u \in L^1(\Omega;|x|^{-b})$ such that  $u_l \to u$ a.e. in $\Omega \setminus \{0\}$ and in $L^1(\Omega;|x|^{-b})$. Since $g \in C(\R) \cap L^\infty(\R)$, by the dominated convergence theorem, we deduce that $g(u_l) \to g(u)$ a.e. in $\Omega \setminus \{0\}$ and in $L^1(\Omega;|x|^{\tau_+})$. Thus by letting $l \to \infty$ in \eqref{ulsol-a}, we derive that $u$ is a weak solution of \eqref{prob:gu}.

By a similar argument, we can show that $\{u_{i,l}\}_{l \in \N}$ and $\{v_{i,l}\}_{l \in \N}$ converge to $u_{i}$ and $v_i$ in $L^1(\Omega;|x|^{-b})$ respectively as $l \to \infty$ and $\{g(u_{i,l})\}_{l \in \N}$ converges to $g(u_i)$, $i=1,2$, in $L^1(\Omega;|x|^{\tau_+})$ as $l \to \infty$. Moreover, $u_1,u_2,v_i$ are solutions of problems \eqref{prob:gu1}, \eqref{prob:gu2} and \eqref{linear-vi}.

Estimate \eqref{order-1} follows from \eqref{u12lvl}.
 \medskip

\noindent {\sc Uniqueness.} The uniqueness for problem \eqref{linear-vi} was established in Theorem \ref{existence2}. The uniqueness for problems \eqref{prob:gu}--\eqref{prob:gu2} follows from Kato's inequality \eqref{Kato:+-1} and the monotonicity of $g$.
\end{proof}

\begin{lemma} \label{subcrcon} Assume
	\ba \label{subcd0} \int_1^\infty  t^{-q-1}(\ln t)^{m} (g(t)-g(-t)) dt<\infty
	\ea
	for $q,m \in \R$, $q >0$ and $m \geq 0$. Let $v$ be a function defined in $\Omega \setminus \Sigma$. For $t>0$ and $G \subseteq \Omega \setminus \{0\}$, set
	\bal E_{G}(t):=\{x\in \xO\setminus \{0\}:| v(x)|>t\} \quad \text{and} \quad e_{G}(t):=\int_{E_{G}(t)} |x|^{\tau_+} dx.
	\eal
	Assume that there exists a positive constant $C_0$ such that
	\ba \label{e}
	e_{G}(t) \leq C_0t^{-q}(\ln s)^m, \quad \forall t>e^\frac{2 m}{q}.
	\ea
	Then for any $t_0>e^\frac{2 m}{q},$ there holds
	\ba\label{53}
	\int_{G}g(|v|)|x|^{\tau_+}dx &\leq g(t_0)\int_{G} |x|^{\tau_+} dx+ C_0 q\int_{t_0}^\infty t^{-q-1}(\ln t)^{m} g(t) dt, \\ \label{53-a}
	-\int_{G}g(-|v|)|x|^{\tau_+}dx&\leq  - g(-t_0)\int_{G} |x|^{\tau_+} dx-C_0 q\int_{t_0}^\infty  t^{-q-1}(\ln t)^{m} g(-t) dt.
	\ea
\end{lemma}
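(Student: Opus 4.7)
The plan is to use the layer-cake / Cavalieri representation of $\int_G g(|v|)\,|x|^{\tau_+}dx$ together with the pointwise bound $e_G(t)\leq f(t):=C_0 t^{-q}(\ln t)^{m}$ for $t>e^{2m/q}$. First I would split $G=(G\cap\{|v|\leq t_0\})\cup(G\cap\{|v|>t_0\})$. On the sublevel set, monotonicity of $g$ gives $g(|v|)\leq g(t_0)$, yielding the contribution $g(t_0)\bigl(\int_G|x|^{\tau_+}dx-e_G(t_0)\bigr)$. On the superlevel set, I would write $g(|v(x)|)=\int_{[0,|v(x)|]}dg(t)$ (using $g(0)=0$), apply Fubini, and arrive at
\begin{equation*}
\int_{G\cap\{|v|>t_0\}}g(|v|)|x|^{\tau_+}dx=g(t_0)e_G(t_0)+\int_{t_0}^{\infty}e_G(t)\,dg(t),
\end{equation*}
so adding the two pieces, the $\pm g(t_0)e_G(t_0)$ cancel and I am reduced to controlling $\int_{t_0}^{\infty}e_G(t)\,dg(t)$.

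Since $dg$ is a positive Borel measure on $[t_0,\infty)$ and $e_G\leq f$ there, I can replace $e_G$ by $f$ to get an upper bound $\int_{t_0}^{\infty}f(t)\,dg(t)$. To turn this into an integral of $g$, I would use $f(t)=\int_{t}^{\infty}(-f'(s))\,ds$ (which holds since $q>0$ forces $f(\infty)=0$) and invoke Fubini a second time:
\begin{equation*}
\int_{t_0}^{\infty}f(t)\,dg(t)=\int_{t_0}^{\infty}(-f'(s))\bigl(g(s)-g(t_0)\bigr)\,ds\leq\int_{t_0}^{\infty}(-f'(s))\,g(s)\,ds,
\end{equation*}
where I used $g(t_0)\geq 0$ (a consequence of $g$ nondecreasing with $g(0)=0$). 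A direct computation gives $-f'(s)=C_0 s^{-q-1}(\ln s)^{m-1}(q\ln s-m)$, and the hypothesis $s>t_0>e^{2m/q}$ yields $q\ln s-m\leq q\ln s$, whence $-f'(s)\leq C_0 q\,s^{-q-1}(\ln s)^{m}$. Combining everything produces \eqref{53}. Estimate \eqref{53-a} then follows by applying \eqref{53} to $\tilde g(t):=-g(-t)$, which is again nondecreasing with $\tilde g(0)=0$ and satisfies the subcritical condition, since $\tilde g(|v|)=-g(-|v|)$.

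The only delicate point will be the step involving $-f'$: one must ensure the restriction $t_0>e^{2m/q}$ is strong enough to absorb the $m(\ln s)^{m-1}$ term into $q(\ln s)^m$. The alternative route via integration by parts $\int f\,dg=[fg]_{t_0}^{T}-\int gf'\,ds$ also works, but it forces one to verify $f(T)g(T)\to 0$; this is true because monotonicity of $g$ gives $g(T)T^{-q}(\ln T)^m\lesssim\int_T^{2T}t^{-q-1}(\ln t)^m g(t)\,dt\to 0$ by the subcritical assumption \eqref{subcd0}. The Fubini route above is cleaner because it sidesteps the boundary-at-infinity estimate entirely.
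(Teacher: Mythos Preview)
Your argument is correct and follows essentially the same route as the paper: split at level $t_0$, apply the layer-cake representation, replace $e_G$ by $f(t)=C_0 t^{-q}(\ln t)^m$, and bound $-f'(s)\leq C_0 q\,s^{-q-1}(\ln s)^m$ on $(e^{2m/q},\infty)$. The only difference is that the paper converts $\int f\,dg$ into $\int(-f')g\,ds$ via explicit integration by parts on $[t_0,T_n]$ and then uses the subcritical assumption to find a subsequence with $T_n^{-q}(\ln T_n)^m g(T_n)\to 0$ so the boundary term vanishes, whereas your Fubini trick (writing $f(t)=\int_t^\infty(-f')\,ds$) bypasses that boundary issue entirely; as you note yourself, this is slightly cleaner but not materially different.
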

\begin{proof}[\textbf{Proof}]
	We note that $g(|v|) \geq g(0)=0$. Let $G \subseteq \Omega \setminus \{0\}$ and $t_0>1$ to be determined later on. Using the fact that $g$ is nondecreasing, we obtain
	\bal \begin{aligned}
		\int_{G} g(|v|) |x|^{\tau_+} dx &\leq \int_{G  \setminus E_{t_0}(v)} g(|v|)|x|^{\tau_+} dx + \int_{E_{t_0}(v)} g(|v|)|x|^{\tau_+} dx \\
		&\leq g(t_0)e(t) -\int_{t_0}^{\infty}g(t) de(t).
	\end{aligned} \eal
	From \eqref{subcd0}, we deduce that there exists an increasing sequence $\{T_n\}$ such that
	\ba \label{Tn}
	\lim_{T_n \to \infty}T_n^{-q}(\ln T_n)^m g(T_n) = 0.
	\ea
	For $T_n > t_0$, we have
	\bal 
	\begin{aligned}
		-\int_{t_0}^{T_n}g(t)de(t) &= -g(T_n)e(T_n) + g(t_0)e(t_0) + \int_{t_0}^{T_n} e(t) dg(t) \\
		&\leq -g(T_n)e(T_n) + g(t_0)e(t_0)+C_0\int_{t_0}^{T_n}t^{-q}(\ln t)^m dg(t)\\
		&\leq (CT_n^{-q}(\ln T_n)^m - e(T_n))g(T_n)   - C_0\int_{t_0}^{T_n} (t^{-q}(\ln t)^m)' g(t) dt.
	\end{aligned} 
\eal
	Here in the last estimate, we have used \eqref{e}.
	Note that if we choose $t_0>e^{\frac{2 m}{q}}$ then
	\ba \label{deriv} -q t^{-q-1}(\ln t)^m  < (t^{-q}(\ln t)^m)' < -\frac{q}{2}t^{-q-1}(\ln t)^m \quad \forall t \geq t_0.
	\ea
	Combining \eqref{Tn}--\eqref{deriv} and then letting $n \to \infty$, we obtain
	\bal
	-\int_{t_0}^{\infty}g(t) de(t) < C_0q \int_{t_0}^{\infty} t^{-q-1}(\ln t)^m g(t) dt.
	\eal
	Thus we have proved estimate \eqref{53}. By  applying estimate \eqref{53} with $g$ replaced by $\tilde g(t)=-g(-t)$, we obtain \eqref{53-a}.
\end{proof}

\begin{lemma} \label{lem:equi}
Assume that $\{g_n\}_{n \in \N}$ is a sequence of nondecreasing continuous functions vanishing at $0$ such that
\ba \label{gk} \BAL
&g_n(t) - g_n(-t) \leq C_1(t), \quad \forall t \geq 1, \, \forall n \in \N, \\
&\int_{1}^\infty (g_n(t)-g_n(-t))t^{-q-1}dt <  C_2, \quad \forall n \in \N,
\EAL \ea
for some $q>1$ and positive constants $C_1,C_2$ independent of $n$. Let $\{v_n\}_{n \in \N}$ be a uniformly bounded in $L_w^q(\Omega;|x|^{\tau_+})$. Then the sequence $\{g_n(v_n)\}_{n \in \N}$ is uniformly bounded and equi-integrable in $L^1(\Omega;|x|^{\tau_+})$.	
\end{lemma}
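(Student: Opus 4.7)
The plan is to deduce both properties from Lemma \ref{subcrcon} applied to each $v_n$ and $g_n$ separately, using the weak-$L^q$ bound on $\{v_n\}$ to produce the decay of weighted level sets required by that lemma. Set $K:=\sup_n \|v_n\|_{L_w^q(\Omega;|x|^{\tau_+})}^* < +\infty$. Then for every measurable $G \subseteq \Omega$ and every $t > 0$, the weighted distribution function
\begin{equation*}
e^n_G(t) := \int_{\{x \in G : |v_n(x)| > t\}} |x|^{\tau_+}\,dx \leq K^q t^{-q},
\end{equation*}
so the hypothesis \eqref{e} of Lemma \ref{subcrcon} holds with $m=0$ and $C_0 = K^q$, uniformly in $n$ and in $G$.

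Applying Lemma \ref{subcrcon} to $v_n$ with $g_n$ and then to $-v_n$ with $\tilde g_n(t) := -g_n(-t)$, and using the pointwise bound $|g_n(v_n)| \leq g_n(|v_n|) - g_n(-|v_n|)$ (which follows from monotonicity of $g_n$ together with $g_n(0) = 0$), I would obtain, for every $t_0 > 1$ and every measurable $G \subseteq \Omega$,
\begin{equation*}
\int_G |g_n(v_n)|\,|x|^{\tau_+}\,dx \leq (g_n(t_0) - g_n(-t_0)) \int_G |x|^{\tau_+}\,dx + K^q q \int_{t_0}^\infty t^{-q-1} [g_n(t) - g_n(-t)]\,dt.
\end{equation*}
Uniform boundedness in $L^1(\Omega;|x|^{\tau_+})$ then follows immediately by taking $G = \Omega$ with any fixed $t_0 > 1$: the first term on the right is controlled by \eqref{gk}(a) at $t_0$ and by the finite measure $\int_\Omega |x|^{\tau_+}\,dx < +\infty$, while the second is controlled by \eqref{gk}(b).

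For equi-integrability, given $\varepsilon > 0$, I would first choose $t_0$ so large that the tail integral $K^q q \int_{t_0}^\infty t^{-q-1}[g_n(t) - g_n(-t)]\,dt < \varepsilon/2$ uniformly in $n$, and then set $\delta := \varepsilon / (2 C_1(t_0))$ so that any measurable $A \subseteq \Omega$ with $\int_A |x|^{\tau_+}\,dx < \delta$ satisfies $\int_A |g_n(v_n)|\,|x|^{\tau_+}\,dx < \varepsilon$. The main obstacle is precisely the uniform smallness of the tail: bound \eqref{gk}(b) alone does not guarantee it (a nondecreasing sequence concentrating mass far out at infinity can have uniformly bounded integrals yet non-uniform tails). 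Here both hypotheses must be combined: the pointwise domination \eqref{gk}(a) provides an envelope $t^{-q-1} C_1(t)$ for the integrands, whose integrability over $[1,+\infty)$ (which is implicit in the intended setting, and automatic when $C_1$ is a constant) permits the dominated convergence theorem to be invoked, yielding the required uniform tail smallness.
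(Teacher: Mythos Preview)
Your proposal is correct and follows essentially the same approach as the paper: both derive the key inequality
\[
\int_G |g_n(v_n)|\,|x|^{\tau_+}\,dx \leq (g_n(t_0)-g_n(-t_0))\int_G |x|^{\tau_+}\,dx + C_0 q\int_{t_0}^\infty t^{-q-1}(g_n(t)-g_n(-t))\,dt
\]
from Lemma \ref{subcrcon} via the weak-$L^q$ bound on the level sets, and read off uniform boundedness and equi-integrability from it.

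You are in fact more careful than the paper on the equi-integrability step. The paper simply asserts that equi-integrability follows from the displayed inequality, without addressing the point you raise: the uniform bound in \eqref{gk}(b) alone does not force the tails $\int_{t_0}^\infty t^{-q-1}(g_n(t)-g_n(-t))\,dt$ to vanish uniformly as $t_0\to\infty$. Your resolution---use the envelope $C_1(t)$ from \eqref{gk}(a) and dominated convergence---is exactly what is needed, and is valid in every application in the paper (where $g_n$ are truncations of a fixed $g$ with $\Lambda_{g,q}<\infty$, so that $C_1(t)=g(t)-g(-t)$ and $t^{-q-1}C_1(t)$ is integrable). Strictly speaking this integrability is an additional hypothesis not explicit in the lemma as stated, but it is clearly the intended reading.
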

\begin{proof}[\textbf{Proof}] By the assumption, there exists a positive constant $C$ such that
\bal
\| v_n \|_{L_w^q(\Omega \setminus \{0\};|x|^{\tau_+})} \leq C, \quad \forall n \in \N,
\eal
which implies that
\bal
\int_{ \{x \in \Omega \setminus \{0\}: |v_n(x)|> t  \} }|x|^{\tau_+}dx \leq t^{-q}\| v_n \|_{L_w^q(\Omega \setminus \{0\};|x|^{\tau_+})} \leq Ct^{-q}, \quad \forall t>0.
\eal
Take arbitrarily $t_0 \geq 1$ and $G \subseteq \Omega \setminus \{0\}$. By Lemma \ref{subcrcon}, we obtain
\ba \label{uni-equi} \BAL
\int_{G}  |g_n(v_n)||x|^{\tau_+}dx &\leq \int_{\Omega \setminus \{0\}} (g_n(|v_n|)-g_n(-|v_n|))|x|^{\tau_+}dx \\
& \leq (g_n(t_0)-g_n(-t_0))\int_{G} |x|^{\tau_+} dx+ Cq\int_{t_0}^\infty  t^{-q-1}(g_n(t)-g_n(-t)) dt \\
&\leq C(N,\Omega,s,\mu,q,C_1(t_0),C_2).
\EAL \ea
Here in the last estimate we have used the assumption \eqref{gk}. Therefore, by taking $t_0=1$ and $G=\Omega \setminus \{0\}$ in \eqref{uni-equi}, we derive that $\{g_n(v_n)\}$ is uniformly bounded in $L^1(\Omega;|x|^{\tau_+})$. From \eqref{uni-equi}, we can also deduce that $\{g_n(v_n)\}$ is equi-integrable in $L^1(\Omega;|x|^{\tau_+})$.
\end{proof}

\begin{proof}[\textbf{Proof of Theorem \ref{existence-semi-1}}]
Set $g_n:=\max\{-n,\min\{g,n\}\}$ with $n\in\N$ and  let $\{\zeta_{\delta}\}_{\delta>0}$ be the sequence of standard mollifiers.
Let $r_0>0$ small  such that $B_{4r_0}(0)\subset\xO$. For $\delta< \frac{1}{8}r <\frac{1}{8} r_0$, put
\bal \xn_{r,\delta}=\zeta_{\delta} \ast (\1_{\xO\setminus B_r(0)}\xn) , \quad  \xn_{1,r,\delta}=\xn_{r,\delta}^+, \quad  \xn_{2,r,\delta}=\xn_{r,\delta}^-.
\eal
Then $\{ \nu_{r,\delta}\}$, $\{ \nu_{1,r,\delta}\}$, {\color{blue}$\{ \nu_{2,r, \delta}\}$} converge weakly to $\1_{\xO\setminus B_r(0)}\nu$, $\1_{\xO\setminus B_r(0)}\nu^+$, $\1_{\xO\setminus B_r(0)}\nu^-$ in $\GTM(\Omega\setminus \{0\};|x|^{\tau_+})$ respectively.
By Theorem \ref{existence4}, there exist weak solutions
$u_{r,\delta,n}$, $u_{1,r,\delta,n}$, $u_{2,r,\delta,n}$, $v_{1,r,\delta}$, $v_{2,r,\delta}$ of the following problems respectively
\bal 
\left\{ \BAL
\CL_\xm^s u+g_n(u)&=\xn_{r,\delta}&&\quad\text{in}\;\xO,\\
u & = 0 &&\quad\text{in}\;\BBR^N\setminus\xO,
\EAL \right.
\eal
\ba \label{gknu1e+} \left\{ \BAL
\CL_\xm^s u +g_n(u)&=\nu_{1,r,\delta}&&\quad\text{in}\;\xO,\\
u&=0&&\quad\text{in}\;\BBR^N\setminus\xO,
\EAL \right.
\ea
\ba \label{gknu2e+} \left\{ \BAL
\CL_\xm^s u -g_n(-u)&=\xn_{2,r,\delta}&&\quad\text{in}\;\xO,\\
u & = 0 &&\quad\text{in}\;\BBR^N\setminus\xO,
\EAL \right.
\ea
and
\bal 
\left\{ \BAL
\CL_\xm^s v &=\xn_{i,r,\delta}&&\quad\text{in}\;\xO,\\
v & = 0 &&\quad\text{in}\;\BBR^N\setminus\xO.
\EAL \right.
\eal
Moreover, for any $b<2s-\tau_+$, $u_{r,\delta,n}$, $u_{1,r,\delta,n}$, $u_{2,r,\delta,n}$, $v_{1,r,\delta}$, $v_{2,r,\delta} \in L^1(\Omega;|x|^{-b})$. We also have $u_{i,r,\delta,n} \geq 0$, $v_{i,r,\delta} \geq 0$ and
\ba \label{order-2}
-v_{2,r,\delta}\leq-u_{2,r,\delta,n}\leq u_{r,\delta,n}\leq u_{1,r,\delta,n}\leq v_{1,r,\delta}.
\ea

Moreover, since $g_n$ is nondecreasing with $g_n(0)=0$, we deduce that $\{u_{i,r,\delta,n}\}$, $i=1,2$, is nonincreasing with respect to $n$. Therefore, there exist positive functions $u_{i,r,\delta}$, $i=1,2$, such that $u_{i,r,\delta,n} \downarrow u_{i,r,\delta}$ a.e. in $\Omega \setminus \{0\}$ and in $L^1(\Omega;|x|^{-b})$ as $n \to +\infty$ for any $b<2s-\tau_+$. Using Dini's lemma with respect to the sequence $\{g_n\}_{n \in \N}$ in compact sets of $(0,+\infty)$, we deduce that $g_n(u_{i,r,\delta,n}) \to g(u_{i,r,\delta})$ a.e. in $\Omega $ as $n \to +\infty$. Since $0 \leq u_{i,r,\delta,n} \leq v_{i,r,\delta} \in L_w^{\frac{N}{N-2s}}(\Omega \setminus \{0\};|x|^{\tau_+})$ (due to Proposition \ref{Marcin-3}), it follows that $\{u_{i,r,\delta,n} \}$ is uniform bounded in $L_w^{\frac{N}{N-2s}}(\Omega \setminus \{0\};|x|^{\tau_+})$ with respect to $n$. From Lemma \ref{lem:equi}, we infer that the sequences $\{g_n(u_{i,r,\delta,n})\}_{n \in \N}$, $i=1,2$, are uniformly bounded and equi-integrable in  $L^1(\Omega;|x|^{\tau_+})$ with respect to $n$. By Vitali's convergence theorem, $g_n(u_{i,r,\delta,n}) \to g(u_{i,r,\delta})$ in $L^1(\Omega;|x|^{\tau_+})$ as $n \to +\infty$. By letting $n \to +\infty$ in the weak formulation for problem \eqref{gknu1e+} and \eqref{gknu2e+}, we derive that $u_{1,r,\delta}$ and $u_{2,r,\delta}$ are respectively weak solutions to the problems
\ba \label{gknu1e} \left\{ \BAL
\CL_\xm^s u +g(u)&=\nu_{1,r,\delta}&&\quad\text{in}\;\xO,\\
u&=0&&\quad\text{in}\;\BBR^N\setminus\xO,
\EAL \right.
\ea
\ba \label{gknu2e} \left\{ \BAL
\CL_\xm^s u -g(-u)&=\xn_{2,r,\delta}&&\quad\text{in}\;\xO,\\
u & = 0 &&\quad\text{in}\;\BBR^N\setminus\xO.
\EAL \right.
\ea

From \eqref{order-2}, we deduce that
\ba \label{urkebound-1} \BAL
|u_{r,\delta,n}| &\leq u_{1,r,\delta,n} + u_{2,r,\delta,n} \leq v_{1,r,\delta} + v_{2,r,\delta}, \\
|g_n(u_{r,\delta,n})| &\leq g_n(u_{1,r,\delta,n}) - g_n(-u_{2,r,\delta,n}),
\EAL \ea
which in turn implies that $\{u_{r,\delta,n}\}_{n \in \N}$ is uniformly bounded in $L^1(\Omega;|x|^{-b})$ for any $b<2s-\tau_+$, as well as $\{g_n(u_{i,r,\delta,n})\}_{n \in \N}$ is uniformly bounded in $L^1(\Omega;|x|^{\tau_+})$.

By using a similar argument as in the proof of Theorem \ref{existence4}, we derive that $\{u_{r,\delta,n}\}$ converges to a function $u_{r,\delta}$ a.e. in $\Omega \setminus \{0\}$ and in $L^1(\Omega;|x|^{-b})$ as $n \to +\infty$.
We note that $g_n^+ =\min\{g^+,n\}$ and $g_n^- = \min\{g^-,n\}$, hence $\{g_n^+\}$ and $\{g_n^-\}$ are nondecreasing sequences of bounded continuous nonnegative functions and $g_n^{\pm} \uparrow g^{\pm}$ as $n \to \infty$. Using Dini's lemma with respect to $\{g_n^\pm\}$ in compact sets of $(0,+\infty)$, we deduce that $g_n^{\pm}(u_{r,\delta,n}) \to g^{\pm}(u_{r,\delta})$ a.e. in $\Omega \setminus \{0\}$ as $n \to +\infty$, consequently, $g_n(u_{r,\delta,n}) \to g(u_{r,\delta})$ a.e. in $\Omega$. By \eqref{urkebound-1} and the generalized dominated convergence theorem, we deduce that  $g_n(u_{r,\delta,n}) \to g(u_{r,\delta})$ in $L^1(\Omega;|x|^{\tau_+})$ as $n \to +\infty$. Passing to the limit, we deduce that $u_{r,\delta}$ is a weak solution of
\ba \label{gknue} \left\{ \BAL
\CL_\xm^s u+g(u)&=\xn_{r,\delta}&&\quad\text{in}\;\xO,\\
u & = 0 &&\quad\text{in}\;\BBR^N\setminus\xO.
\EAL \right.
\ea
From \eqref{order-2} and \eqref{urkebound-1}, we deduce that $u_{i,r,\delta} \geq 0$ and
\bal 
 \BAL
  &-v_{2,r,\delta}\leq-u_{2,r,\delta}\leq u_{r,\delta}\leq u_{1,r,\delta}\leq v_{1,r,\delta}, \\
&|u_{r,\delta}| \leq u_{1,r,\delta} + u_{2,r,\delta} \leq  v_{1,r,\delta} + v_{2,r,\delta}, \\
&|g(u_{r,\delta})| \leq g(u_{1,r,\delta}) - g(-u_{2,r,\delta}).
\EAL \eal
By Theorem \ref{existence2}, for any $b<2s-\tau_+$, we have
\bal
\| v_{i,r,\delta} \|_{L^1(\Omega;|x|^{-b})} &\leq C(N,\Omega,s,\mu,b)\| \nu_{i,r,\delta} \|_{L^1(\Omega;|x|^{\tau_+} )} \leq C(N,\Omega,s,\mu,b,r) \| \nu_{i,r} \|_{\GTM(\Omega \setminus \{0\};|x|^{\tau_+} )}.
\eal
Therefore
\ba \label{apriori-2}
\| u_{r,\delta} \|_{L^1(\Omega;|x|^{-b})} + \sum_{i=1,2}\| u_{i,r,\delta} \|_{L^1(\Omega;|x|^{-b})}
\leq C(N,\Omega,s,\mu,b,r) \| \nu_{i,r} \|_{\GTM(\Omega \setminus \{0\};|x|^{\tau_+} )}.
\ea
As above, we can show that the sequences $\{g(u_{r,\delta})\}_{\delta}$, $\{g(u_{i,r,\delta})\}_{\delta}$, $i=1,2$, are uniformly bounded and equi-integrable in  $L^1(\Omega;|x|^{\tau_+})$ with respect to $\delta$. Again, by a similar argument as in the proof of Theorem \ref{existence4}, we deduce that there exist functions $u_r$, $u_{1,r}$, $u_{2,r} $ such that $\{u_{r,\delta}\}_{\delta>0}$, $\{u_{1,r,\delta}\}_{\delta>0}$, $\{u_{2,r,\delta}\}_{\delta>0}$ converge to $u_r$, $u_{1,r}$ and $u_{2,r}$ a.e. in $\Omega \setminus \{0\}$ and in $L^1(\Omega;|x|^{-b})$ for any $b<2s-\tau_+$ respectively as $\delta \to 0$. This implies that $\{g(u_{r,\delta})\}_{\delta>0}$, $\{g(u_{1,r,\delta})\}_{\delta>0}$, $\{g(u_{2,r,\delta})\}_{\delta>0}$ converge to $g(u_r)$, $g(u_{1,r})$ and $g(u_{2,r})$ a.e. in $\Omega \setminus \{0\}$ as $\delta \to 0$ respectively. Employing an analogous argument as above, we can show that $\{g(u_{2,r,\delta})\}_{\delta>0}$ converge to $g(u_r)$, $g(u_{1,r})$ and $g(u_{2,r})$ in $L^1(\Omega;|x|^{\tau_+})$ as $\delta \to 0$.
By passing to the limit in the weak formulation for problems \eqref{gknue}, \eqref{gknu1e}, \eqref{gknu2e}, we deduce that $u_r$, $u_{1,r}$, $u_{2,r}$ are respectively weak solutions to the following problems
\bal 
\left\{ \BAL
\CL_\xm^s u+g(u)&=\xn_{r}&&\quad\text{in}\;\xO,\\
u & = 0 &&\quad\text{in}\;\BBR^N\setminus\xO,
\EAL \right.
\eal 
\bal 
\left\{ \BAL
\CL_\xm^s u+g(u)&=\xn_{1,r}&&\quad\text{in}\;\xO,\\
u & = 0 &&\quad\text{in}\;\BBR^N\setminus\xO,
\EAL \right.
\eal
\bal 
\left\{ \BAL
\CL_\xm^s u+g(u)&=\xn_{2,r}&&\quad\text{in}\;\xO,\\
u & = 0 &&\quad\text{in}\;\BBR^N\setminus\xO.
\EAL \right.
\eal
Proceeding as in {\it Step 2} of the proof of Theorem \ref{existence4}, we can show that there exist functions $u,u_i,v_i $, $i=1,2$, such that $\{u_r\}$, {\color{blue}$\{u_{i,r}\}$, $\{v_{i,r}\}$} converge to $u$, $u_i$, $v_i$, $i=1,2$. Moreover, $u$, $u_1$, $u_2$, $v_i$ are the unique weak solutions to problems \eqref{prob:gu}, \eqref{prob:gu1}, \eqref{prob:gu2}, \eqref{linear-vi} respectively.

Finally, estimate \eqref{apriori-1} follows from \eqref{apriori-2} by letting $\delta \to 0$ and $r \to 0$ successively.
\end{proof}

\section{Semilinear problem with Dirac measures concentrated at $0$}

Let $\ell \in\BBR$ and $\xO'$ be an open bounded domain such that $\xO\Subset\xO'$ and $\xe\leq \frac{d(0)}{16}.$  Set $H(x):= \ell \xF_{s,\xm}^\xO(x)|x|^{-\tau_+}$ for $x \neq 0$, where $\Phi_{s,\mu}^{\Omega}$ is given in \eqref{PhiOmega}.

Inspired by \cite{KKP2}, we will prove the existence using the respective monotone operator. Set $\xO_\xe'=\xO'\setminus B_{\xe}(0)$ and $\xO_\xe=\xO\setminus B_{\xe}(0).$ For any $u,v\in H^s(\xO_{\frac{\xe}{2}}'),$ we define the operators
\bal
\mathcal{A}_1u(v):=\frac{C_{N,s}}{2}\int_{\xO_{\frac{\xe}{2}}'}\int_{\xO_{\frac{\xe}{2}}'}\frac{(u(x)-u(y))(v(x)-v(y))}{|x-y|^{N+2s}}|y|^{\tau_+} dy |x|^{\tau_+} dx
\eal
and
\bal
\mathcal{A}_2u(v)&:=C_{N,s}\int_{\BBR^N\setminus\xO_{\frac{\xe}{2}}'}\int_{\xO_\xe}\frac{(u(y)-H(x))v(y)}{|x-y|^{N+2s}}|y|^{\tau_+} dy |x|^{\tau_+} dx+\int_{\xO_\xe} g(|x|^{\tau_+} u)v|x|^{\tau_+} dx,
\eal
where $g\in C(\BBR)\cap L^\infty(\xO)$ is a nondecreasing function such that $g(0)=0.$ We set
\ba \label{AA1A2}
\mathcal{A}u(v):=\mathcal{A}_1u(v)+\mathcal{A}_2u(v)\quad \text{for } u,v\in H^s(\xO_{\frac{\xe}{2}}').
\ea
Finally, set $M=\sup_{t\in\BBR}|g(t)|$.

\begin{lemma} \label{dual}
	Let $u\in H^s(\xO_{\frac{\xe}{2}}'),$ then $\mathcal{A}u\in (H^s(\xO_{\frac{\xe}{2}}'))^*.$
\end{lemma}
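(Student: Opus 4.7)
The plan is to verify that $\mathcal{A}u$ is a bounded linear functional on $H^s(\Omega_{\frac{\varepsilon}{2}}')$. Linearity in $v$ is immediate from the definitions of $\mathcal{A}_1$ and $\mathcal{A}_2$, so the task reduces to establishing an estimate of the form $|\mathcal{A}u(v)|\leq C\|v\|_{H^s(\Omega_{\frac{\varepsilon}{2}}')}$, where $C$ may depend on $u$, $\ell$, $\varepsilon$, $\Omega$, $\Omega'$, $N$, $s$, $\mu$, $M$. I would handle the three pieces $\mathcal{A}_1 u$, the exterior integral in $\mathcal{A}_2 u$, and the $g$-term in $\mathcal{A}_2 u$ separately.

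First, for $\mathcal{A}_1 u(v)$, I would exploit the fact that on $\Omega_{\frac{\varepsilon}{2}}'$ the weight $|x|^{\tau_+}$ is bounded above and below by positive constants (since $\Omega'$ is bounded and $|x|\geq \varepsilon/2$). Consequently the weighted Gagliardo bilinear form
\[
\int_{\Omega_{\frac{\varepsilon}{2}}'}\int_{\Omega_{\frac{\varepsilon}{2}}'}\frac{(u(x)-u(y))(v(x)-v(y))}{|x-y|^{N+2s}}|x|^{\tau_+}|y|^{\tau_+}\,dy\,dx
\]
is controlled, via Cauchy--Schwarz, by a constant multiple of $[u]_{H^s(\Omega_{\frac{\varepsilon}{2}}')}\,[v]_{H^s(\Omega_{\frac{\varepsilon}{2}}')}$, giving the desired bound on $|\mathcal{A}_1 u(v)|$.

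Second, for the $g$-term, since $|g|\leq M$ and $|x|^{\tau_+}$ is bounded on the set $\Omega_\varepsilon$, I would dominate
\[
\Bigl|\int_{\Omega_\varepsilon}g(|x|^{\tau_+}u)v\,|x|^{\tau_+}\,dx\Bigr|\leq M\,C(\varepsilon,\tau_+)\,|\Omega_\varepsilon|^{1/2}\,\|v\|_{L^2(\Omega_\varepsilon)},
\]
and invoke the continuous embedding $H^s(\Omega_{\frac{\varepsilon}{2}}')\hookrightarrow L^2(\Omega_\varepsilon)$ to conclude.

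The key step is the exterior double integral in $\mathcal{A}_2 u$. The decisive observation is that for $x\in\R^N\setminus\Omega_{\frac{\varepsilon}{2}}'$ and $y\in\Omega_\varepsilon$ one has $|x-y|\geq c(\varepsilon,\Omega,\Omega')>0$: indeed such an $x$ lies either in $B_{\varepsilon/2}(0)$ (so $|x-y|\geq\varepsilon/2$) or in $\R^N\setminus\Omega'$ (so $|x-y|\geq\dist(\Omega,\R^N\setminus\Omega')$). I would split the numerator as $|u(y)-H(x)|\leq |u(y)|+|H(x)|$ and apply Fubini together with Cauchy--Schwarz in $y$, reducing the problem to showing that
\[
y\mapsto \int_{\R^N\setminus\Omega_{\frac{\varepsilon}{2}}'}\frac{|x|^{\tau_+}+|H(x)|\,|x|^{\tau_+}}{|x-y|^{N+2s}}\,dx
\]
is bounded uniformly in $y\in\Omega_\varepsilon$. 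For the term with $|x|^{\tau_+}$, convergence holds at $0$ because $\tau_+>-N$ (which follows from $\tau_+\geq\frac{2s-N}{2}$) and at infinity because $\tau_+<2s$. For the term with $H$, I would use that $\Phi_{s,\mu}^\Omega$ vanishes outside $\Omega$, so $H$ is supported in $\Omega\cap B_{\varepsilon/2}(0)$ on the relevant integration domain, and the bound $|H(x)|\,|x|^{\tau_+}\leq C|x|^{\tau_-}$ from \eqref{PhiOmega} gives integrability at the origin since $\tau_-\geq -\frac{N-2s}{2}>-N$. Combining these estimates and using that $|u|\in L^2(\Omega_\varepsilon)$ with the trivial bound on $\|v\,|y|^{\tau_+/2}\|_{L^2(\Omega_\varepsilon)}$, I would close the estimate. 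Putting the three bounds together delivers $\mathcal{A}u\in (H^s(\Omega_{\frac{\varepsilon}{2}}'))^*$. The main bookkeeping difficulty is the careful verification of integrability of the singular kernel weighted by $|x|^{\tau_\pm}$, but all the required inequalities for $\tau_\pm$ follow from the standing assumption $\mu\geq\mu_0$.
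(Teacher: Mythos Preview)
Your approach is essentially the same as the paper's: both split $\mathcal{A}u(v)$ into the three pieces (the weighted Gagliardo form, the exterior double integral, and the $g$-term) and bound each via Cauchy--Schwarz/H\"older, exploiting that $|x|^{\tau_+}$ is bounded above and below on $\Omega_{\frac{\varepsilon}{2}}'$, that $|g|\leq M$, and that $|x-y|$ is bounded away from zero for $x\in\R^N\setminus\Omega_{\frac{\varepsilon}{2}}'$, $y\in\Omega_\varepsilon$. Your integrability checks for the weighted kernel are in fact more explicit than what the paper writes down.

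One small slip: the inequality $\tau_-\geq -\frac{N-2s}{2}$ is false in general --- in fact $\tau_-\leq \frac{2s-N}{2}$ with equality only at $\mu=\mu_0$. What you actually need, and what is true, is $\tau_->-N$; this follows from the relation $\tau_-+\tau_+=2s-N$ (see \cite[Proposition~1.2]{CW}) together with $\tau_+<2s$. With this correction your integrability argument for the $H$-term goes through unchanged.
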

\begin{proof}[\textbf{Proof}]
	Let $v\in H^s(\xO_{\frac{\xe}{2}}'),$ then using the H\"{older} inequality, we can easily show that
	\bal
	|\mathcal{A}_1u(v)|\leq C(N,s,\xO',\xe)\norm{u}_{H^s(\xO_{\frac{\xe}{2}}')}\norm{v}_{H^s(\xO_{\frac{\xe}{2}}')}.
	\eal
	
	By the H\"{o}lder inequality and the definition of $H$, we have
	\bal
	&\quad \bigg|\int_{\BBR^N\setminus\xO_{\frac{\xe}{2}}'}\int_{\xO_\xe}\frac{(u(y)-H(x))v(y)}{|x-y|^{n+2s}}|y|^{\tau_+} dy |x|^{\tau_+} dx\bigg|\\
	&\leq C(\xO,\xO',\xe,N,s)\bigg(\norm{u}_{H^s(\xO_{\frac{\xe}{2}}')} +\int_{\BBR^N\setminus B_{d(0)}(0)}\frac{H(x)|x|^{\tau_+}}{1+|x|^{N+2s}}\bigg)\norm{v}_{H^s(\xO_{\frac{\xe}{2}}')} \\
	&\leq C(\xO,\xO',\xe,N,s,\ell)(\norm{u}_{H^s(\xO_{\frac{\xe}{2}}')} +1)\norm{v}_{H^s(\xO_{\frac{\xe}{2}}')}.
	\eal
	
	Again, by the H\"older inequality, we also obtain
	\bal
	\bigg|\int_{\xO_\xe} g(|x|^{\tau_+} u)v|x|^{\tau_+} dx\bigg|\leq C(\xO,M,\xe) \norm{v}_{H^s(\xO_{\frac{\xe}{2}}')}.
	\eal
	From the above preceding estimates and \eqref{AA1A2}, we obtain
	\bal |\mathcal{A}u(v)|\leq C(N,s,\Omega,\xO',\xe,\ell,M,\norm{u}_{H^s(\xO_{\frac{\xe}{2}}')})\norm{v}_{H^s(\xO_{\frac{\xe}{2}}')},
	\eal
	which implies the desired result.
\end{proof}
Set
\ba \label{Ke}
\mathcal{K}_{\xe}&:=\big\{v\in H^s(\xO_{\frac{\xe}{2}}'):\;v=H \text{ a.e. in }\;\BBR^N\setminus \Omega_{\varepsilon} \big\}.
\ea
We note that $\mathcal{K}_{\xe}$ is not empty since $H\in \mathcal{K}_{\xe}$ convex and closed.

\begin{lemma}\label{monotone}
	The operator $\mathcal{A}$ is monotone, coercive and weakly continuous on the set $\mathcal{K}_\xe.$
\end{lemma}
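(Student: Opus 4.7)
The plan is to verify the three properties separately, relying on the splitting $\mathcal{A} = \mathcal{A}_1 + \mathcal{A}_2$ and on the fact that any pair $u,v \in \mathcal{K}_\varepsilon$ satisfies $u-v = 0$ a.e.\ in $\mathbb{R}^N \setminus \Omega_\varepsilon$, so the ``$H$-contribution'' from the exterior drops out of every difference.

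\textbf{Monotonicity.} I compute $(\mathcal{A}u - \mathcal{A}v)(u-v)$ for $u,v \in \mathcal{K}_\varepsilon$. The form $\mathcal{A}_1$ is symmetric bilinear, so $(\mathcal{A}_1 u - \mathcal{A}_1 v)(u-v)$ equals the nonnegative quadratic form $\|u-v\|_{H^s(\Omega'_{\varepsilon/2};|x|^{\tau_+})}^2$. For the nonlocal part of $\mathcal{A}_2$, since $H$ does not depend on $u$ or $v$, the difference collapses to
\[
C_{N,s}\int_{\mathbb{R}^N\setminus\Omega'_{\varepsilon/2}}\int_{\Omega_\varepsilon}\frac{(u(y)-v(y))^2}{|x-y|^{N+2s}}|y|^{\tau_+}|x|^{\tau_+}\,dy\,dx\ \geq\ 0,
\]
where I used $u-v=0$ a.e.\ in $\mathbb{R}^N\setminus\Omega_\varepsilon$. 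Finally, because $g$ is nondecreasing and $|x|^{\tau_+}>0$ in $\Omega_\varepsilon$, the integrand $[g(|x|^{\tau_+}u)-g(|x|^{\tau_+}v)](u-v)|x|^{\tau_+}$ is pointwise nonnegative. Summing these contributions yields $(\mathcal{A}u-\mathcal{A}v)(u-v)\ge 0$.

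\textbf{Coercivity.} I fix a reference element $w_0\in\mathcal{K}_\varepsilon$ (for instance an $H^s$-extension of $H$ into $\Omega_\varepsilon$) and estimate $\mathcal{A}u(u-w_0)$ from below. By the bilinearity of $\mathcal{A}_1$ and the Cauchy–Schwarz inequality,
\[
\mathcal{A}_1 u(u-w_0) \ge \tfrac{1}{2}\|u-w_0\|^2_{s,\tau_+,\Omega'_{\varepsilon/2}} - C_1\|w_0\|^2_{s,\tau_+,\Omega'_{\varepsilon/2}}.
\]
For the nonlocal exterior term in $\mathcal{A}_2$, writing $u(y)-H(x) = (u(y)-w_0(y)) + (w_0(y)-H(x))$ produces a nonnegative ``diagonal'' term (as in the monotonicity computation) and a linear term in $u-w_0$ controlled by $C_2\|u-w_0\|_{L^2(\Omega_\varepsilon;|x|^{\tau_+})}$, the constant $C_2$ depending on $\varepsilon,\Omega',\ell$ and on the tail integral of $H|x|^{\tau_+}/|x-y|^{N+2s}$ which is finite because $\supp H$ is separated from $\Omega_\varepsilon$ and $H\in L^\infty_{\mathrm{loc}}(\mathbb{R}^N\setminus\{0\})$. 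The $g$-term contributes at most $M\|u-w_0\|_{L^1(\Omega_\varepsilon;|x|^{\tau_+})}$ in absolute value. A fractional Poincaré inequality (applicable since $u-w_0$ vanishes on a set of positive measure outside $\Omega_\varepsilon$) converts the $L^2$ and $L^1$ terms into lower-order terms in $\|u-w_0\|_{H^s(\Omega'_{\varepsilon/2})}$, so the quadratic contribution from $\mathcal{A}_1$ dominates and $\mathcal{A}u(u-w_0)/\|u-w_0\|_{H^s(\Omega'_{\varepsilon/2})}\to+\infty$ as $\|u\|_{H^s}\to\infty$ within $\mathcal{K}_\varepsilon$.

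\textbf{Weak continuity.} Assume $u_n\rightharpoonup u$ in $H^s(\Omega'_{\varepsilon/2})$ with $u_n\in\mathcal{K}_\varepsilon$; I must show $\mathcal{A}u_n(v)\to\mathcal{A}u(v)$ for every $v\in H^s(\Omega'_{\varepsilon/2})$. Both $\mathcal{A}_1u(v)$ and the first (nonlocal) piece of $\mathcal{A}_2u(v)$ are \emph{linear} in $u$, and the linear functionals $u\mapsto \mathcal{A}_1 u(v)$ and $u\mapsto C_{N,s}\iint\frac{u(y)v(y)}{|x-y|^{N+2s}}|y|^{\tau_+}|x|^{\tau_+}$ are continuous on $H^s(\Omega'_{\varepsilon/2})$ by Lemma~\ref{dual}; hence they are weakly continuous and pass to the limit. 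For the $g$-term, the compact embedding $H^s(\Omega'_{\varepsilon/2})\hookrightarrow L^2(\Omega_\varepsilon)$ gives $u_n\to u$ in $L^2(\Omega_\varepsilon)$ and, along a subsequence, a.e.\ in $\Omega_\varepsilon$; continuity of $g$ yields $g(|x|^{\tau_+}u_n)\to g(|x|^{\tau_+}u)$ a.e., and since $|g|\le M$, the dominated convergence theorem gives convergence in $L^2(\Omega_\varepsilon;|x|^{\tau_+})$, and hence $\int_{\Omega_\varepsilon} g(|x|^{\tau_+}u_n)\,v\,|x|^{\tau_+}dx\to \int_{\Omega_\varepsilon} g(|x|^{\tau_+}u)\,v\,|x|^{\tau_+}dx$. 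Urysohn's subsequence principle removes the passage to subsequences and completes the weak continuity.

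The main technical obstacle is the coercivity step: one must carefully control the \emph{mixed} nonlocal term in $\mathcal{A}_2$ involving $H(x)$ on the exterior. Verifying the integrability of this tail on $\mathbb{R}^N\setminus\Omega'_{\varepsilon/2}$ and matching it with a Poincaré-type inequality for functions vanishing outside $\Omega_\varepsilon$ is where the argument is most delicate; monotonicity and weak continuity are then essentially direct consequences of the sign of $g$ and compact Sobolev embeddings.
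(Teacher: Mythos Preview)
Your argument is essentially correct and follows the same three-step structure as the paper, but the execution differs in two places worth noting.

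\textbf{Coercivity.} You work harder than necessary. The paper simply reuses the monotonicity computation: from Step~1 one has
\[
\mathcal{A}u_j(u_j-v)-\mathcal{A}v(u_j-v)\;\geq\;\frac{C_{N,s}}{2}\int_{\Omega'_{\varepsilon/2}}\int_{\Omega'_{\varepsilon/2}}\frac{|(u_j-v)(x)-(u_j-v)(y)|^{2}}{|x-y|^{N+2s}}|y|^{\tau_+}|x|^{\tau_+}\,dy\,dx,
\]
and since $u_j-v\in W^{s,2}_0(\Omega_\varepsilon)$ the right side is equivalent to $\|u_j-v\|_{H^s(\Omega'_{\varepsilon/2})}^{2}$ by the standard fractional Poincar\'e inequality. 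Dividing by the norm gives coercivity in one line, with no need to estimate tail integrals of $H$ or invoke Cauchy--Schwarz on mixed terms. Your route works, but there is a small misstatement: it is \emph{not} true that $\supp H$ is separated from $\Omega_\varepsilon$ (indeed $\Phi_{s,\mu}^\Omega>0$ on all of $\Omega$). What actually makes the tail integral finite is that the $x$-integration region $\mathbb{R}^N\setminus\Omega'_{\varepsilon/2}=(\mathbb{R}^N\setminus\Omega')\cup B_{\varepsilon/2}(0)$ is separated from the $y$-region $\Omega_\varepsilon$, and $H(x)|x|^{\tau_+}=\ell\Phi_{s,\mu}^\Omega(x)$ vanishes on $\mathbb{R}^N\setminus\Omega'$ and is integrable on $B_{\varepsilon/2}(0)$.

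\textbf{Weak continuity.} You prove a stronger statement than the paper: you start from \emph{weak} convergence $u_n\rightharpoonup u$ in $H^s(\Omega'_{\varepsilon/2})$, whereas the paper only assumes strong convergence $u_j\to u$ (which is the standard hypothesis for ``weak continuity'' in the monotone-operator framework of \cite{HKM}). Your argument is fine because $\mathcal{A}_1$ and the nonlocal part of $\mathcal{A}_2$ are linear in $u$, hence automatically weakly continuous, and for the $g$-term the compact embedding upgrades weak to strong $L^2$-convergence anyway. The paper's approach is more direct for $\mathcal{A}_1$ (just Cauchy--Schwarz plus $\|u_j-u\|\to0$) but the end result is the same.
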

\begin{proof}[\textbf{Proof}] We split the proof into three steps. \medskip
	
\noindent \textbf{Step 1.} \emph{We claim that the operator $\mathcal{A}$ is monotone on the set $\mathcal{K}_\xe$}, namely
 \ba \label{Amonotone}
	\mathcal{A}u(u-v)-\mathcal{A}v(u-v)\geq 0, \quad\forall u,v\in \mathcal{K}_\xe.
 	\ea
	
	Indeed, we have
	\ba\label{15+}\BAL
	\mathcal{A}_1u(u-v)&-\mathcal{A}_1v(u-v)=
	\frac{C_{N,s}}{2}\int_{\xO_{\frac{\xe}{2}}'}\int_{\xO_{\frac{\xe}{2}}'}\frac{|u(x)-v(x)-(u(y)-v(y))|^2}{|x-y|^{N+2s}}|y|^{\tau_+} dy |x|^{\tau_+} dx \geq 0
	\EAL
	\ea
	and
	\ba\label{16}\BAL
	\mathcal{A}_2u(u-v)-\mathcal{A}_2v(u-v)&=C_{N,s}\int_{\BBR^N\setminus\xO_{\frac{\xe}{2}}'}\int_{\xO_\xe}\frac{|u(y)-v(y)|^2}{|x-y|^{N+2s}}|y|^{\tau_+} dy |x|^{\tau_+} dx\\
	&\quad +\int_{\xO_\xe} (g(|x|^{\tau_+} u)-g(|x|^{\tau_+} v))(u-v)|x|^{\tau_+} dx\\
	&\geq C_{N,s}\int_{\BBR^N\setminus\xO_{\frac{\xe}{2}}'}\int_{\xO_\xe}\frac{|u(y)-v(y)|^2}{|x-y|^{N+2s}}|y|^{\tau_+} dy |x|^{\tau_+} dx \geq 0,
	\EAL
	\ea
	since $g$ is nondecreasing. Combining \eqref{15+} and \eqref{16} leads to \eqref{Amonotone}.
	\medskip
	
\noindent \textbf{Step 2.} \emph{We claim that the operator $\mathcal{A}$ is coercive on the set $\mathcal{K}_\xe$}, namely, there exists $v\in \mathcal{K}_\xe$ such that
	\bal
	\frac{\mathcal{A}u_j(u_j-v)-\mathcal{A}v(u_j-v)}{\norm{u_j-v}_{H^s(\xO_{\frac{\xe}{2}}')}}\to +\infty\quad \text{as}\;\;\norm{u_j-v}_{H^s(\xO_{\frac{\xe}{2}}')}\to +\infty.
	\eal
	We fix a function $v\in \mathcal{K}_\xe.$ In view of the proof of \eqref{15+} and \eqref{16}, we have
	\bal
	\mathcal{A}u_j(u_j-v)-\mathcal{A}v(u_j-v)&\geq \frac{C_{N,s}}{2}\int_{\xO_{\frac{\xe}{2}}'}\int_{\xO_{\frac{\xe}{2}}'}\frac{|u_j(x)-v(x)+v(y)-u_j(y)|^2}{|x-y|^{n+2s}}|y|^{\tau_+} dy |x|^{\tau_+} dx\\
	&\geq C(N,s,\Omega,\Omega',\xe)\norm{u_j-v}_{H^s(\xO_{\frac{\xe}{2}}')}^2,
	\eal
	where in the last inequality we have used the standard fractional Sobolev embedding since $u_j-v\in W^{s,2}_0(\xO_{\frac{\xe}{2}}')$.
	
	\medskip
	
\noindent \textbf{Step 3.} \emph{We claim that the operator $\mathcal{A}$ is weakly continuous on the set $\mathcal{K}_\xe$}. To this purpose, we need to show that if $\{u_j\}_{j=1}^\infty\subset\mathcal{K}_\xe$ such that $u_j\to u\in\mathcal{K}_\xe $ in $H^s(\xO_{\frac{\xe}{2}}')$ then $\mathcal{A}u_j(v)-\mathcal{A}u(v)\to 0$ for any $v\in H^s(\xO_{\frac{\xe}{2}}').$
	
	First we note that, for any $v\in H^s(\xO_{\frac{\xe}{2}}')$,
	\ba\label{15}\BAL
	&\quad |\mathcal{A}_1u_j(v)-\mathcal{A}_1u(v)|\\
	&\leq
	\frac{C_{N,s}}{2}\int_{\xO_{\frac{\xe}{2}}'}\int_{\xO_{\frac{\xe}{2}}'}\frac{|u_j(x)-u(x)-(u_j(y)-u(y))||v(x)-v(y)|}{|x-y|^{N+2s}}|y|^{\tau_+} dy |x|^{\tau_+} dx \\
	&\leq \frac{C_{N,s}}{2} \| u_j-u \|_{H^s(\Omega'_{\frac{\varepsilon}{2}})} \| v \|_{H^s(\Omega'_{\frac{\varepsilon}{2}})} \to 0 \quad \text{as } j \to +\infty.
	\EAL
	\ea
	
	Next we have
\bal
	\BAL
	|\mathcal{A}_2u_j (v) - \mathcal{A}_2 u(v)| &\leq C_{N,s}\int_{\BBR^N\setminus\xO_{\frac{\xe}{2}}'}\int_{\xO_\xe}\frac{|u_j(y)-u(y)||v(y)|}{|x-y|^{N+2s}}|y|^{\tau_+} dy |x|^{\tau_+} dx\\
	&\quad +\int_{\xO_\xe} (g(|x|^{\tau_+} u_j(x))-g(|x|^{\tau_+} u(x)))v(x)|x|^{\tau_+} dx.
	\EAL
\eal
	
	Since $u_j\to u\in\mathcal{K}_\xe $ in $H^s(\xO_{\frac{\xe}{2}}'),$ we have that $u_j\to u$ in $L^2(\xO_\xe).$
	By the H\"older inequality and the estimate (due to the fact $\frac{2s-N}{2} \leq \tau_+ <2s<N$)
	\bal
	\int_{\BBR^N\setminus\xO_{\frac{\xe}{2}}'} \frac{|x|^{\tau_+}}{|x-y|^{N+2s}}dx < C(N,\Omega,\Omega',s,\mu,\varepsilon), \quad \forall y \in \Omega_{\varepsilon},
	\eal
we deduce that
\bal 
	\BAL
	&\quad \int_{\BBR^N\setminus\xO_{\frac{\xe}{2}}'}\int_{\xO_\xe}\frac{|u_j(y)-u(y)||v(y)|}{|x-y|^{N+2s}}|y|^{\tau_+} dy |x|^{\tau_+} dx \\ &\leq C(s,\mu,\varepsilon)\int_{\xO_\xe}|u_j(y)-u(y)||v(y)| \int_{\BBR^N\setminus\xO_{\frac{\xe}{2}}'} \frac{|x|^{\tau_+}}{|x-y|^{N+2s}}dx dy \\
	&\leq C(N,\Omega,\Omega',s,\mu,\varepsilon)\|u_j-u  \|_{L^2(\Omega_{\varepsilon})}\|v \|_{L^2(\Omega_{\varepsilon})} \to 0 \quad \text{as } j \to \infty.
	\EAL \eal
Since $g$ is bounded and uniformly continuous in $\BBR,$ $\xe<|x|<\mathrm{diam}(\xO)$ for any $x \in \xO_\xe$ and $u_\xe\to u$ in measure in $\xO_\xe,$ we may deduce that
	\bal
	\int_{\xO_\xe} |g(|x|^{\tau_+} u_j)-g(|x|^{\tau_+} u)|^2dx\to 0 \quad \text{as } j \to +\infty. 
	\eal
	Therefore, by the H\"older inequality, we obtain
	\ba \label{17-b} \BAL
	&\quad \bigg|\int_{\xO_\xe} (g(|x|^{\tau_+} u_j)-g(|x|^{\tau_+} u))v|x|^{\tau_+} dx\bigg|\\
	&\leq C(s,\mu,\xe,\xO)\left(\int_{\xO_\xe} |g(|x|^{\tau_+} u_j)-g(|x|^{\tau_+} u)|^2dx\right)^{\frac{1}{2}}\norm{v}_{L^2(\xO_\xe)} \to 0 \quad \text{as } j \to +\infty.
	\EAL \ea
	
	Combining \eqref{15}--\eqref{17-b}, we conclude that $\mathcal{A}u_j(v)-\mathcal{A}u(v)\to 0$ as $j \to +\infty$. The proof is complete.
\end{proof}

\begin{lemma}\label{aproxsol}
	Assume $\ell>0$ and $g\in C(\BBR)\cap L^\infty(\xO)$ is a nondecreasing function such that $g(0)=0$. There exists a function $u_\xe\in W^{s,2}(\xO_{\frac{\xe}{2}}')$ such that $u_{\varepsilon}=\ell \xF_{s,\xm}^\xO$ in $\BBR^N\setminus \xO_\xe$ and
	\ba \label{aprox-1}
	\frac{C_{N,s}}{2}\int_{\BBR^N}\int_{\BBR^N}\frac{(u_\xe(x)-u_\xe(y))(\xf(x)-\xf(y))}{|x-y|^{N+2s}}dy dx+\xm\int_{\xO_\varepsilon}\frac{u_\xe(x)\xf(x)}{|x|^{2s}}dx +\int_{\xO_\xe} g(u_\xe)\xf dx=0
	\ea
	for all $\xf\in C_0^\infty(\xO_{\xe})$.
	Furthermore there holds
	\ba \label{uplow-1} (\ell \xF_{s,\xm}^\xO-\psi_M)^+\leq u_{\varepsilon} \leq \ell \xF_{s,\xm}^\xO \quad \text{in } {\color{blue}\xO_{\varepsilon}},
	\ea
	where $M=\sup_{t\in\BBR}|g(t)|$, $\psi_M$ is the nonnegative weak solution of
	\ba \label{psiM} \left\{ \BAL
	\CL_\xm^s \psi&=M&&\quad\text{in}\;\, \xO,\\
	u&=0&&\quad\text{in}\;\, \BBR^N\setminus\xO.
	\EAL \right.
	\ea

\end{lemma}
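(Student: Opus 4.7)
\medskip
\noindent \emph{Sketch of the proof strategy.} The plan is to invoke the framework set up in Lemmas \ref{dual}--\ref{monotone}: since $\mathcal{A}$ is monotone, coercive and weakly continuous on the nonempty closed convex set $\mathcal{K}_\xe \subset H^s(\xO'_{\xe/2})$, the classical Browder--Minty theorem for variational inequalities on convex sets delivers some $w \in \mathcal{K}_\xe$ satisfying $\mathcal{A}w(v-w) \geq 0$ for every $v \in \mathcal{K}_\xe$. Because $\mathcal{K}_\xe$ is affine---any two of its elements differ by a function vanishing on $\BBR^N\setminus \xO_\xe$---taking $v = w \pm t\eta$ for $t>0$ and any $\eta$ in the underlying linear subspace forces the equality $\mathcal{A}w(\eta) = 0$ on that subspace.

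The next step is to set $u_\xe(x) := |x|^{\tau_+}w(x)$ on $\xO'_{\xe/2}$ and extend $u_\xe$ by $\ell \xF_{s,\xm}^\xO$ on $\BBR^N\setminus\xO'_{\xe/2}$; this gluing is consistent with $\mathcal{K}_\xe$ on the overlap and gives $u_\xe = \ell\xF_{s,\xm}^\xO$ on $\BBR^N\setminus \xO_\xe$. Since $|x|^{\tau_+}$ is bounded above and below on $\xO'_{\xe/2}$, one checks $u_\xe \in W^{s,2}(\xO'_{\xe/2})$. For any $\xf \in C_0^\infty(\xO_\xe)$, the function $\eta := |x|^{-\tau_+}\xf$ lies in the linear subspace above (as $|x|\geq \xe/2$ on $\supp\xf$), so $\mathcal{A}w(\eta) = 0$. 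Unfolding this identity through the substitution $u_\xe = |x|^{\tau_+}w$ and the equivalence between the weighted Dirichlet form $\langle\cdot,\cdot\rangle_{s,\tau_+}$ and the fractional Hardy form $\ll\cdot,\cdot\gg_\xm$ from Theorem \ref{th:main-1}(ii), the contribution of $\mathcal{A}_1$ produces both the unweighted Gagliardo double integral and the Hardy potential term $\xm \int_{\xO_\xe} u_\xe \xf |x|^{-2s}\,dx$; the term $\mathcal{A}_2$ supplies the remaining nonlocal interactions between $\xO_\xe$ and $\BBR^N\setminus\xO_\xe$ through the prescribed exterior value $H$ together with the nonlinear integral $\int_{\xO_\xe} g(u_\xe)\xf\,dx$. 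This yields exactly \eqref{aprox-1}.

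For the pointwise bound \eqref{uplow-1} the plan is to compare $u_\xe$ against explicit sub- and supersolutions. Since $g(0)=0$ and $g$ is nondecreasing, the zero function solves the equation with vanishing exterior data, while $u_\xe = \ell\xF_{s,\xm}^\xO \geq 0$ on $\BBR^N\setminus \xO_\xe$, so a Kato-type inequality (adapted from \cite{CGN} to the bounded nonlinearity) gives $u_\xe \geq 0$ in $\xO_\xe$. Recalling from \eqref{PhiOmega} that $\CL_\xm^s(\ell\xF_{s,\xm}^\xO) = 0$ weakly in $\xO\setminus\{0\}$, and noting $g(\ell\xF_{s,\xm}^\xO) \geq 0$, the function $\ell\xF_{s,\xm}^\xO$ is a supersolution of the bounded problem in $\xO_\xe$; together with $u_\xe = \ell\xF_{s,\xm}^\xO$ on $\BBR^N\setminus\xO_\xe$, comparison yields $u_\xe \leq \ell\xF_{s,\xm}^\xO$. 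Setting $z := \ell\xF_{s,\xm}^\xO - \psi_M$, one has $\CL_\xm^s z = -M$ in $\xO\setminus\{0\}$ and $g(z) \leq M$, hence $\CL_\xm^s z + g(z) \leq 0$ in $\xO_\xe$; since $\psi_M \geq 0$, we also have $z \leq \ell\xF_{s,\xm}^\xO = u_\xe$ on $\BBR^N\setminus\xO_\xe$, and comparison yields $z \leq u_\xe$ in $\xO_\xe$. Combining $u_\xe \geq 0$ with $u_\xe \geq z$ gives $u_\xe \geq z^+$, which is the claimed lower bound.

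The main obstacle will be rigorously carrying out the algebraic translation between the weighted formulation $\mathcal{A}w(\eta) = 0$ and the fractional Hardy weak form \eqref{aprox-1}. Beyond the bilinear identity from Theorem \ref{th:main-1}(ii), this demands careful bookkeeping in the nonlocal term $\mathcal{A}_2$, whose integrand encodes the exterior data $H$ both near the origin (where $\xF_{s,\xm}^\xO$ is singular when $\tau_-<0$) and on $\BBR^N\setminus\xO$; the weight $|x|^{\tau_+}$ inside $\mathcal{A}_2$ is precisely what reassembles the missing pieces of the unweighted double integral after the substitution $u_\xe = |x|^{\tau_+}w$. The comparison steps in the last part further require a Kato-type inequality in the variational class attached to the bounded problem in $\xO_\xe$, which can be derived by direct adaptation of the machinery developed in \cite{CGN}.
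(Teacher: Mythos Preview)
Your proposal is correct and follows essentially the same route as the paper: monotone operator theory on the convex set $\mathcal{K}_\xe$ to produce $w\in\mathcal{K}_\xe$ with $\mathcal{A}w(\eta)=0$ for test functions, the substitution $u_\xe=|x|^{\tau_+}w$ to pass from the weighted form to the Hardy form \eqref{aprox-1}, and comparison with $0$, $\ell\xF_{s,\xm}^\xO$, and $\ell\xF_{s,\xm}^\xO-\psi_M$ for the bounds \eqref{uplow-1}. The only cosmetic difference is that the paper carries out the comparison step directly at the level of the weighted variational formulation in $\xO_\xe$---testing \eqref{aprox-2} with $(v_\xe-H)^+$, $v_\xe^-$, and the analogue for $\ell\xF_{s,\xm}^\xO-\psi_M$---rather than packaging these as Kato-type inequalities imported from \cite{CGN}; since those Kato inequalities are themselves proved by exactly such test-function arguments, your description in the last paragraph is consistent with what the paper does.
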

\begin{proof}[\textbf{Proof}]
	By Lemma \ref{monotone} and the standard theory of monotone operators (see, e.g., \cite[Proposition 17.2]{HKM}), there exists $v_\xe\in \mathcal{K}_\xe$ such that
	\bal
	\mathcal{A}v_\xe(\xz-v_\xe)\geq0,
	\eal
	for any $\xz\in \mathcal{K}_\xe.$ Set $\xz_\pm=\pm\tilde\xf+v_\xe$ for $\tilde\xf\in C_0^\infty(\xO_\xe),$ then  $\xz_\pm\in \mathcal{K}_\xe$ and by the above inequality we can easily show that
	\ba \label{aprox-2} \BAL
	0&=\mathcal{A}v_\xe(\tilde\xf) \\
	&=\frac{C_{N,s}}{2}\int_{\xO_{\frac{\xe}{2}}'}\int_{\xO_{\frac{\xe}{2}}'}\frac{(v_\xe(x)-v_\xe(y))(\tilde\xf(x)-\tilde\xf(y))}{|x-y|^{N+2s}}|y|^{\tau_+} dy|x|^{\tau_+} dx\\
	&\quad +C_{N,s}\int_{\BBR^N\setminus\xO_{\frac{\xe}{2}}'}\int_{\xO_\xe}\frac{(v_\xe(y)-H(x))\tilde\xf(y)}{|x-y|^{N+2s}}|y|^{\tau_+} dy |x|^{\tau_+} dx+\int_{\xO_\xe} g(|x|^\tp v_\xe)\tilde\xf|x|^{\tau_+} dx\\
	&=\frac{C_{N,s}}{2}\int_{\BBR^N}\int_{\BBR^N}\frac{(v_\xe(x)-v_\xe(y))(\tilde\xf(x)-\tilde\xf(y))}{|x-y|^{N+2s}}|y|^{\tau_+} dy|x|^{\tau_+} dx+\int_{\xO_\xe} g(|x|^{\tau_+} v_\xe)\tilde\xf|x|^{\tau_+} dx.
	\EAL \ea
	Setting $u_\xe=|x|^{\tau_+} v_\xe$ and $\xf=|x|^{\tau_+}\tilde\xf,$ we obtain \eqref{aprox-1}.
	
	We have
	\ba \label{aprox-3} \BAL
	\langle H, \tilde \phi \rangle_{s,\tau_+}
	+\int_{\xO_\xe} g(|x|^{\tau_+} H)\tilde\xf|x|^{\tau_+} dx\geq 0, \quad \forall \; 0 \leq  \tilde\xf\in C_0^\infty(\xO_{\varepsilon}).
	\EAL \ea
	
	Put $w_{\varepsilon}=v_{\varepsilon}-H$ then from \eqref{aprox-2} and \eqref{aprox-3}, we have
	\ba \label{aprox-4} 
	\langle w_{\varepsilon}, \tilde \phi \rangle_{s,\tau_+}  +\int_{\xO_\xe} (g(|x|^{\tau_+} v_{\varepsilon}) - g(|x|^{\tau_+} H))\tilde\xf|x|^{\tau_+} dx \leq 0, \quad \forall \; 0 \leq \tilde\xf\in C_0^\infty(\xO_{\varepsilon}).
	\ea
	Note that $w_{\varepsilon}^+\in W^{s,2}(\xO_{\frac{\xe}{2}}')$ and $w_{\varepsilon}^+=0$ in $\BBR^N\setminus \xO_\xe.$ Since $\xO_\xe$ is smooth, we deduce that $w_{\varepsilon}^+\in W^{s,2}_0(\xO_\xe),$ hence we may use it as a test function in \eqref{aprox-4} and the standard density argument together with the monotonicity assumption on $g$ to obtain that
\bal 
	0  \leq \langle w_{\varepsilon}, w_\varepsilon^+ \rangle_{s,\tau_+}
	+\int_{\xO_\xe} (g(|x|^{\tau_+} v_{\varepsilon}) - g(|x|^{\tau_+} H))w_{\varepsilon}^+|x|^{\tau_+} dx \leq 0.
\eal
	This implies
	$w_{\varepsilon}^+=0$ a.e. in $\BBR^N \setminus \{0\}$, hence $v_\xe\leq H$ a.e. in $\Omega_{\varepsilon}$. This implies the upper bound in \eqref{uplow-1}.
	
	As for the lower bound in \eqref{uplow-1}, by taking $\tilde \phi = v_{\varepsilon}^- \in W_0^{s,2}(\Omega_\varepsilon)$ in \eqref{aprox-2}, the standard density argument and the assumption that $g$ is nondecreasing and $g(0)=0$, we have
\bal 
	0=\langle v_{\varepsilon}, v_{\varepsilon}^- \rangle_{s,\tau_+} +\int_{\xO_\xe} g(|x|^{\tau_+} v_\xe)v_{\varepsilon}^-|x|^{\tau_+} dx \leq 0,
\eal
	which implies $v_\varepsilon^-=0$ a.e. in $\R^N \setminus \{0\}$. Therefore $v_{\varepsilon} \geq 0$ in $\Omega_{\varepsilon}$, and hence $u_{\varepsilon} \geq 0$ a.e. in $\Omega_{\varepsilon}$.
	
	Since  $\psi_M$ is the nonnegative solution of \eqref{psiM}, we have \bal \CL_\xm^s(\ell\xF_{s,\xm}^\xO-\psi_M)+g(\ell\xF_{s,\xm}^\xO-\psi_M) \leq 0
	\eal
	in the sense of distribution in $\xO_\xe.$ By a similar argument as above, we may show that $u_\xe\geq \ell\xF_{s,\xm}^\xO-\psi_M$ a.e. in $\Omega_{\varepsilon}$. Thus $u \geq (\ell\xF_{s,\xm}^\xO-\psi_M)^+$ a.e. in $\Omega_{\varepsilon}$.
\end{proof}

\begin{definition} \label{sol:semieq-Dirac} {\it
	Assume $\ell \in \R$ and $g: \R \to \R$ is a nondecreasing continuous function such that $g(0)=0$. A function $u$ is called a \textit{weak solution} to problem 	
	\ba \label{eq:g(u)-kdirac} \left\{\BAL
	\CL_\xm^s u+g(u)&=\ell \delta_0&&\quad\text{in}\;\,\xO,\\
	u&=0&&\quad\text{in}\;\,\BBR^N\setminus\xO,
	\EAL \right.
	\ea
	if for any $b<2s-\tau_+$, $u\in L^1(\xO;|x|^{-b}),$ $g(u)\in L^1(\xO;|x|^{\tau_+})$ and
	\ba \label{sol:g(u)-kdirac}
	\int_\xO u(-\xD)^s_{\tau_+}\psi dx+\int_\xO g(u)\psi|x|^{\tau_+} dx=\ell\int_{\xO}\xF_{s,\xm}^\xO (-\xD)^s_{\tau_+}\psi dx,\quad\forall\psi\in \mathbf{X}_\xm(\xO;|x|^{-b}),
	\ea	
where $\mathbf{X}_\xm(\xO;|x|^{-b})$ is defined in Definition \ref{def:weaksol}.}
\end{definition}


\begin{theorem} \label{existence-gLinf}
	Assume $\ell >0$ and $g\in C(\BBR)\cap L^\infty(\xO)$ is a nondecreasing function such that $g(0)=0$. Then there exists a unique weak solution $u\in W^{s,2}_{\loc}(\xO'\setminus\{0\})\cap C(\xO\setminus\{0\})$ of \eqref{eq:g(u)-kdirac}. The solution $u$ satisfies
	\ba\label{est}
	(\ell \xF_{s,\xm}^\xO-\psi_M)^+ \leq u\leq \ell\xF_{s,\xm}^\xO\quad \text{a.e. in } \;\xO \setminus \{0\}
	\ea
	and
	\ba \label{assym-1}
	\lim_{\Omega \ni  x \to 0}\frac{u(x)}{\xF_{s,\xm}^\xO(x)}=\ell.
	\ea
\end{theorem}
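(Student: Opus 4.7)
The plan is to construct $u$ as a locally uniform limit of the approximate solutions $u_\xe$ from Lemma~\ref{aproxsol} and to deduce every assertion from the uniform sandwich estimate~\eqref{uplow-1}. Extending each $u_\xe$ by $\ell\xF_{s,\xm}^\xO$ on $\xO\setminus\xO_\xe$, one has $(\ell\xF_{s,\xm}^\xO-\psi_M)^+\le u_\xe\le\ell\xF_{s,\xm}^\xO$ on $\xO$ uniformly in $\xe$. Combined with $\xF_{s,\xm}^\xO(x)\le c|x|^{\tau_-}$ from~\eqref{PhiOmega} and $\psi_M\in L^\infty(\xO)$, this supplies both a locally uniform $L^\infty$ bound on $\xO\setminus\{0\}$ and the $L^1(\xO;|x|^{-b})$ control needed for dominated convergence whenever $b<2s-\tau_+$.

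On any compact $K\Subset\xO\setminus\{0\}$, the equation $\CL_\xm^s u_\xe=-g(u_\xe)$ has bounded right-hand side and bounded potential $\xm|x|^{-2s}$, so interior regularity for fractional elliptic operators furnishes uniform $C^\alpha_{\loc}(\xO\setminus\{0\})$ and $W^{s,2}_{\loc}(\xO'\setminus\{0\})$ bounds. A diagonal extraction then produces a subsequence $u_{\xe_j}$ converging to some $u\in C(\xO\setminus\{0\})\cap W^{s,2}_{\loc}(\xO'\setminus\{0\})$ locally uniformly in $\xO\setminus\{0\}$, and by dominated convergence also in $L^1(\xO;|x|^{-b})$, with $g(u_{\xe_j})\to g(u)$ in $L^1(\xO;|x|^{\tau_+})$.

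The crux is to promote these convergences to the weak formulation~\eqref{sol:g(u)-kdirac}. The natural route is to work with $w_\xe:=\ell\xF_{s,\xm}^\xO-u_\xe\ge 0$, which vanishes on $\R^N\setminus\xO_\xe$. Using the identity satisfied by $u_\xe$ in Lemma~\ref{aproxsol} together with the fact that $\xF_{s,\xm}^\xO$ is a weak solution of $\CL_\xm^s\xF=0$ on $\xO\setminus\{0\}$, one finds that $w_\xe$ solves $\CL_\xm^s w_\xe=g(u_\xe)$ weakly in $\xO_\xe$ with zero exterior data. A density argument then identifies $w_\xe$ with the weak solution in the sense of Theorem~\ref{existence2} for the source $g(u_\xe)\chi_{\xO_\xe}\in L^\infty(\xO)$, giving $\int_\xO w_\xe(-\xD)^s_{\tau_+}\psi\,dx=\int_\xO g(u_\xe)\chi_{\xO_\xe}\psi|x|^{\tau_+}\,dx$ for every $\psi\in\mathbf{X}_\xm(\xO;|x|^{-b})$. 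Passing to the limit in both sides and substituting $u=\ell\xF_{s,\xm}^\xO-w$ yields exactly~\eqref{sol:g(u)-kdirac}.

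For uniqueness, if $u_1,u_2$ are two weak solutions, their difference solves a Poisson problem with source $-(g(u_1)-g(u_2))$ in the sense of Theorem~\ref{existence2}; applying Kato's inequality~\eqref{Kato||-1} tested against the function $\xi_b$ from~\eqref{xib} and invoking the monotonicity of $g$ forces $\int_\xO|u_1-u_2||x|^{-b}\,dx\le 0$, hence $u_1=u_2$ a.e. The asymptotic~\eqref{assym-1} is immediate from~\eqref{est}: $\psi_M\in L^\infty(\xO)$ while $\xF_{s,\xm}^\xO(x)\to+\infty$ as $x\to 0$, so $\psi_M/\xF_{s,\xm}^\xO\to 0$ and the sandwich forces $u/\xF_{s,\xm}^\xO\to\ell$. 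The principal obstacle is the rigorous identification of $w_\xe$ with the Poisson weak solution of Theorem~\ref{existence2} for test functions in $\mathbf{X}_\xm(\xO;|x|^{-b})$ rather than only $C_0^\infty(\xO_\xe)$: the vanishing of $w_\xe$ on $B_\xe\cap\xO$ must be absorbed into the weighted distributional framework of~\cite{CGN}, and one must check that the contraction $\xO_\xe\uparrow\xO\setminus\{0\}$ as $\xe\to 0$ does not create any spurious concentration at the origin beyond the Dirac contribution already carried by $\ell\xF_{s,\xm}^\xO$.
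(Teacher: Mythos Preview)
Your overall architecture matches the paper's: build $u_\xe$ via Lemma~\ref{aproxsol}, extract a limit $u$ by compactness, and exploit the sandwich~\eqref{uplow-1}. However, the obstacle you flag at the end is genuine and your proposed resolution does not work. The function $w_\xe=\ell\xF_{s,\xm}^\xO-u_\xe$ is \emph{not} the weak solution of $\CL_\xm^s w=g(u_\xe)\chi_{\xO_\xe}$ on $\xO$ in the sense of Theorem~\ref{existence2}: it solves this equation only on $\xO_\xe$, with Dirichlet data $w_\xe=0$ imposed on $B_\xe$. For a test function $\phi\in C_0^\infty(B_\xe\setminus\{0\})$ one computes $\langle\!\langle w_\xe,\phi\rangle\!\rangle_\mu=-C_{N,s}\int_{\R^N\setminus B_\xe}\int_{B_\xe}w_\xe(x)\phi(y)|x-y|^{-N-2s}\,dy\,dx$, which is nonzero in general, while the right-hand side $\int g(u_\xe)\chi_{\xO_\xe}\phi$ vanishes. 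Thus no density argument alone can upgrade the formulation from $\mathbf{X}_\xm(\xO_\xe;\cdot)$ to $\mathbf{X}_\xm(\xO;|x|^{-b})$.

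The paper avoids this entirely by reversing the order of operations. It first passes $\xe\to 0$ at the level of test functions $\phi\in C_0^\infty(\xO\setminus\{0\})$, obtaining that the limit $u$ satisfies $\CL_\xm^s u+g(u)=0$ in the distributional sense on $\xO\setminus\{0\}$ (equation~\eqref{eq-usol}). Only \emph{then} does it introduce $w$, defined directly as the weak solution on $\xO$ of $\CL_\xm^s w=g(u)$ furnished by Theorem~\ref{existence2}. The sum $u+w$ then satisfies $\CL_\xm^s(u+w)=0$ on $\xO\setminus\{0\}$ together with the asymptotic $(u+w)/\xF_{s,\xm}\to\ell$ at the origin (from~\eqref{assym-1} and the pointwise bound $w\le C\,d(x)^s|x|^{\tau_+}$). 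The key external ingredient you are missing is the classification result \cite[Theorem~4.14]{CW}, which forces $u+w=\ell\xF_{s,\xm}^\xO$; substituting this into the weak formulation~\eqref{102} for $w$ yields~\eqref{sol:g(u)-kdirac} directly.

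Two smaller points. First, your claim that $\psi_M\in L^\infty(\xO)$ is false when $\xm_0\le\xm<0$: in that range $\tau_+<0$ and one only has $\psi_M(x)\le C\,d(x)^s|x|^{\tau_+}$, which blows up at the origin. The ratio $\psi_M/\xF_{s,\xm}^\xO$ still tends to zero because $|x|^{\tau_+}/\xF_{s,\xm}(x)\to 0$ (using $\tau_+>\tau_-$ for $\xm>\xm_0$, or the logarithm for $\xm=\xm_0$), so~\eqref{assym-1} survives, but the argument needs the sharper estimate. Second, the paper obtains the $W^{s,2}_{\loc}$ bound by an explicit Caccioppoli-type estimate testing~\eqref{aprox-1} with $\eta_\xe^2 u_\xe$, rather than invoking black-box interior regularity; this is worth noting since the nonlocal tail of $u_\xe$ (which equals $\ell\xF_{s,\xm}^\xO$ on $B_\xe$) must be controlled, and the sandwich~\eqref{uplow-1} is what makes this possible.
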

\begin{proof}[\textbf{Proof}]
	{\sc Uniqueness.} Suppose that $u_1,u_2$ are two weak solutions of \eqref{eq:g(u)-kdirac}. Then for any $b<2s-\tau_+$, by Theorem \ref{existence2},  for any nonnegative $\psi \in \mathbf{X}_{\mu}(\Omega;|x|^{-b})$, 
	\ba \label{u1-u2}
	\int_{\Omega}(u_1-u_2)^+ (-\Delta)_{\tau_+}^s \psi dx + \int_{\Omega} (g(u_1)- g(u_2))(\sign^+(u_1-u_2))\psi |x|^{\tau_+} dx \leq 0. 
	\ea
Taking $\psi=\xi_b$, the solution of \eqref{xib}, in \eqref{u1-u2} and noting that $g$ is nondecreasing, we deduce $(u_1-u_2)^+ = 0$ in $\Omega$. This implies $u_1 \leq u_2$ a.e. in $\Omega$. Similarly, $u_2 \leq u_1$ a.e. in $\Omega$. Thus $u_1 = u_2$ a.e. in $\Omega$.

	\medskip
	
	{\sc Existence.} For $\varepsilon>0$, let $u_\xe$ be the solution in Lemma \ref{aproxsol} and $\eta \in C^\infty(\R)$ such that $0\leq\eta\leq1$, $\eta(t)=0$ for any $|t|\leq 1$ and $\eta(t)=1$ for any $|t|\geq 2$. For $\xe>0$, set $\eta_{\xe}(x)=\eta(\xe^{-1} |x|)$. Consider $\xe$ small enough such that $B_{16\xe}(0)\subset \xO.$ Using $\eta_\xe^2(x)u_\xe(x)$ as a test function in \eqref{aprox-1}, we obtain
	\ba\label{18} \BAL
	&\quad \frac{C_{N,s}}{2}\int_{\BBR^N}\int_{\BBR^N}\frac{(u_\xe(x)-u_\xe(y))(\eta^2_\xe(x)u_\xe(x)-\eta^2_\xe(y)u_\xe(y))}{|x-y|^{N+2s}}dy \\
	&= -\int_{\xO_\xe} g(u_\xe)u_\xe\eta^2_\xe dx-\xm\int_{\xO_\xe}\frac{u_\xe^2(x)\eta^2_\xe(x)}{|x|^{2s}}dx.
	\EAL \ea
	
	By \eqref{uplow-1}, it is easy to see that
\bal 
	\left|\int_{\xO_\xe} g(u_\xe)u_\xe\eta^2_\xe dx \right| + \left|\xm\int_{\xO_\xe}\frac{u_\xe^2(x)\eta^2_\xe(x)}{|x|^{2s}}dx \right|\leq   C(N,s,\xm,\xO,\xe,\ell,M),
\eal
where $M=\sup_{t\in\BBR}|g(t)|$.	
	We write
\bal \BAL
	&\quad \int_{\BBR^N}\int_{\BBR^N}\frac{(u_\xe(x)-u_\xe(y))(\eta^2_\xe(x)u_\xe(x)-\eta^2_\xe(y)u_\xe(y))}{|x-y|^{N+2s}}dydx\\
	&=
	\int_{\xO_{\frac{\xe}{2}}'}\eta^2_\xe(x)\int_{\xO_{\frac{\xe}{2}}'}\frac{|u_\xe(x)-u_\xe(y)|^2}{|x-y|^{N+2s}}dydx\\
	&\quad+\int_{\xO_{\frac{\xe}{2}}'}\int_{\xO_{\frac{\xe}{2}}'}u_\xe(y)\frac{(u_\xe(x)-u_\xe(y))(\eta^2_\xe(x)-\eta^2_\xe(y))}{|x-y|^{N+2s}}dydx  +2\int_{\BBR^N\setminus\xO_{\frac{\xe}{2}}'}\int_{\xO_{\frac{\xe}{2}}'}
	\frac{\eta^2_\xe(y)u_\xe^2(y)}{|x-y|^{N+2s}}dydx.
	\EAL
\eal
	By Young's inequality, we have
\bal 
	\BAL
	&\quad \left| \int_{\xO_{\frac{\xe}{2}}'}\int_{\xO_{\frac{\xe}{2}}'}u_\xe(y)\frac{(u_\xe(x)-u_\xe(y))(\eta^2_\xe(x)-\eta^2_\xe(y))}{|x-y|^{N+2s}}dydx \right| \\
	&\leq \frac{1}{8}\int_{\xO_{\frac{\xe}{2}}'}\int_{\xO_{\frac{\xe}{2}}'}\frac{(u_\xe(x)-u_\xe(y))^2 (\eta_{\varepsilon}(x)+\eta_{\varepsilon}(y))^2}{|x-y|^{N+2s}}dydx
	+8\int_{\xO_{\frac{\xe}{2}}'}\int_{\xO_{\frac{\xe}{2}}'}u_\xe^2(y)\frac{(\eta_\xe(x)-\eta_\xe(y))^2}{|x-y|^{N+2s}}dydx \\
	&\leq\frac12 \int_{\xO_{\frac{\xe}{2}}'}\eta^2_\xe(x)\int_{\xO_{\frac{\xe}{2}}'}\frac{(u_\xe(x)-u_\xe(y))^2}{|x-y|^{N+2s}}dydx + 8\int_{\xO_{\frac{\xe}{2}}'}\int_{\xO_{\frac{\xe}{2}}'}u_\xe^2(y)\frac{(\eta_\xe(x)-\eta_\xe(y))^2}{|x-y|^{N+2s}}dydx
\EAL \eal
	and by Lemma \ref{aproxsol} and the regularity of $\eta_{\varepsilon}$, we deduce
	\ba \label{21-b}
	\int_{\xO_{\frac{\xe}{2}}'}\int_{\xO_{\frac{\xe}{2}}'}u_\xe^2(y)\frac{(\eta_\xe(x)-\eta_\xe(y))^2}{|x-y|^{N+2s}}dydx \leq C(N,\Omega,s,\mu,\varepsilon,\ell).
	\ea
	Moreover, by using the fact that $\eta_{\varepsilon}=0$ in $B_{\varepsilon}(0)$ and $u_{\varepsilon}=0$ in $\R^N \setminus \Omega$, we have
	\ba \label{21-c}
	\int_{\BBR^N\setminus\xO_{\frac{\xe}{2}}'}\int_{\xO_{\frac{\xe}{2}}'}
	\frac{\eta^2_\xe(y)u_\xe^2(y)}{|x-y|^{N+2s}}dydx = \int_{\BBR^N\setminus\xO_{\frac{\xe}{2}}'}\int_{\xO_\xe}
	\frac{\eta^2_\xe(y)u_\xe^2(y)}{|x-y|^{N+2s}}dydx \leq C(N,s,\mu,\Omega,\Omega',\varepsilon,\ell).
	\ea
	Combining \eqref{18}--\eqref{21-c} yields
	\ba \label{21-d}
	\int_{\xO_{\frac{\xe}{2}}'}\eta^2_\xe(x)\int_{\xO_{\frac{\xe}{2}}'}\frac{(u_\xe(x)-u_\xe(y))^2}{|x-y|^{N+2s}}dydx \leq C(N,s,\mu,\Omega,\Omega',\varepsilon,\ell,M).
	\ea

	Now, let us estimate the following term
	\ba\label{24}\BAL
	&\int_{\BBR^N}\int_{\BBR^N}\frac{|\eta_\xe(x)u_\xe(x)-\eta_\xe(y)u_\xe(y)|^2}{|x-y|^{N+2s}}dydx \\
	&=\int_{\xO_{\frac{\xe}{2}}'}\int_{\xO_{\frac{\xe}{2}}'}\frac{|\eta_\xe(x)u_\xe(x)-\eta_\xe(y)u_\xe(y)|^2}{|x-y|^{N+2s}}dydx+2\int_{\BBR^N\setminus\xO_{\frac{\xe}{2}}'}\int_{\xO_{\frac{\xe}{2}}'}
	\frac{\eta^2_\xe(y)u_\xe^2(y)}{|x-y|^{N+2s}}dydx.
	\EAL
	\ea
	The first term on the right of \eqref{24} is bounded from above by using \eqref{21-d} and \eqref{21-b}
	\ba\label{25}\BAL
	&\int_{\xO_{\frac{\xe}{2}}'}\int_{\xO_{\frac{\xe}{2}}'}\frac{|\eta_\xe(x)u_\xe(x)-\eta_\xe(y)u_\xe(y)|^2}{|x-y|^{N+2s}}dydx \\
	&\leq
	2\int_{\xO_{\frac{\xe}{2}}'}\eta^2_\xe(x)\int_{\xO_{\frac{\xe}{2}}'}\frac{(u_\xe(x)-u_\xe(y))^2}{|x-y|^{N+2s}}dydx
+2\int_{\xO_{\frac{\xe}{2}}'}\int_{\xO_{\frac{\xe}{2}}'}u_\xe^2(y)\frac{(\eta_\xe(x)-\eta_\xe(y))^2}{|x-y|^{N+2s}}dydx \\
	&\leq C(N,s,\mu,\Omega,\Omega',\varepsilon,\ell,M).
	\EAL
	\ea
	Plugging \eqref{25} and \eqref{21-c} into \eqref{24} leads to
\ba\label{28-1}
	 \int_{\BBR^N}\int_{\BBR^N}\frac{|\eta_\xe(x)u_\xe(x)-\eta_\xe(y)u_\xe(y)|^2}{|x-y|^{N+2s}}dydx  \leq C(N,s,\mu,\Omega,\Omega',\varepsilon,\ell,M).
	 \ea
	
	By the standard fractional compact Sobolev embedding and a diagonal argument, there exists a subsequence $\{u_{\xe_k}\}_{k \in \N}$ such that
	$u_{\xe_k}\to u$ a.e. in $\Omega \setminus \{0\}$. We find that $u\in W^{s,2}_{\loc}(\BBR^N\setminus \{0\})\cap C(\xO\setminus\{0\})$ and $u=0$ in $\BBR^N\setminus\xO$.
	Moreover, for any $\xf\in C_0^\infty(\xO \setminus \{0\})$, there exists $\bar \varepsilon >0$ such that $\phi \in C_0^\infty(\Omega_{\bar \varepsilon})$. Thus, for $2\varepsilon_k \leq \bar \varepsilon$, $\phi \in C_0^\infty(\Omega_{\varepsilon_k})$. Therefore, by the dominated convergence theorem, we obtain
	\bal
	\lim_{k\to\infty}\int_{\BBR^N}\int_{\BBR^N}\frac{(u_{\xe_k}(x)-u_{\xe_k}(y))(\xf(x)-\xf(y))}{|x-y|^{N+2s}}dy dx&=
	\int_{\BBR^N}\int_{\BBR^N}\frac{(u(x)-u(y))(\xf(x)-\xf(y))}{|x-y|^{N+2s}}dy dx, \\
	\lim_{n\to\infty}\int_{\xO_{\varepsilon_k}} \frac{u_{\xe_k}(x)\xf(x)}{|x|^{2s}}dx&=\int_\xO\frac{u(x)\xf(x)}{|x|^{2s}}dx, \\
	\lim_{k\to\infty}\int_{\xO_{\varepsilon_k}} g(u_{\xe_k})\xf dx&=\int_{\xO} g(u)\xf dx.
	\eal
	Thus by passing $k \to \infty$ in \eqref{aprox-1} with $\varepsilon$ replaced by $\varepsilon_k$, we obtain
	\ba\label{eq-usol}\BAL
	\frac{C_{N,s}}{2}\int_{\BBR^N}\int_{\BBR^N}\frac{(u(x)-u(y))(\xf(x)-\xf(y))}{|x-y|^{N+2s}}dy dx+\xm\int_\xO\frac{u(x)\xf(x)}{|x|^{2s}}dx+\int_{\xO} g(u)\xf dx=0
	\EAL
	\ea
	for all $\xf\in C_0^\infty(\xO\setminus\{0\})$.
	
	We see that $u$ satisfies \eqref{est} due to \eqref{uplow-1}. Next by \cite[Lemma 4.4,4.5]{CGN}, $\psi_M \in C^\xb(\xO\setminus\{0\})$ for any $\xb\in (0,2s)$ and
\bal 
	\psi_M(x) \leq C(M,\xO,s,N,\xm) d(x)^s|x|^{\tau_+} \quad \forall x \in \Omega \setminus \{0\},
\eal
	which implies
	\ba \label{psiM/Phi} \lim_{\Omega \ni x \to 0}\frac{\psi_M(x)}{\xF_{s,\xm}^\xO(x)}=0.
	\ea
This and \eqref{est} lead to \eqref{assym-1}.
	
	Since $g \in L^\infty(\R)$, by Theorem \ref{existence2}, there exists a unique weak solution $w \in L^1(\Omega;|x|^{-b})$ of
	\bal \left\{ \BAL
	\CL_\xm^s w &=g(u)&&\quad\text{in}\;\,\xO,\\
	w&=0&&\quad\text{in}\;\, \BBR^N\setminus\xO,
	\EAL \right.
	\eal
	namely
	\ba\label{102}
	\int_\xO w(-\xD)^s_{\tau_+}\psi dx=\int_\xO g(u) \psi |x|^{\tau_+} dx,\quad\forall\psi\in \mathbf{X}_\xm(\xO;|x|^{-b}).
	\ea
	Thus,  together with \eqref{eq-usol}, we obtain that for  $ \xf\in C_0^\infty(\xO\setminus\{0\})$
	\bal
	\frac{C_{N,s}}{2} \int_{\BBR^N}\int_{\BBR^N}\frac{(u(x)+w(x)-u(y)-w(y))(\xf(x)-\xf(y))}{|x-y|^{N+2s}}dy dx +\xm\int_\xO\frac{(u(x)+w(x))\xf(x)}{|x|^{2s}}dx=0.
	\eal
	
	By \cite[Lemma 4.4,  4.5]{CGN}, $w \in C^\xb(\xO\setminus\{0\})$ for any $\xb\in (0,2s)$ and
	\ba \label{wi-est-1a}
	w(x)\leq C(M,\xO,s,N,\xm) d(x)^s|x|^{\tau_+}, \quad \forall x \in \Omega \setminus \{0\}.
	\ea

	Combining \eqref{assym-1}, \eqref{wi-est-1a}, the definition of $\Phi_{s, \mu}$ in \eqref{fu} and the fact that $\tau_+ \geq \tau_-$, we derive
	\bal
	\lim_{|x|\to 0}\frac{u(x)+w(x)}{\xF_{s,\xm}(x)}=\ell.
	\eal
	Hence by \cite[Theorem 4.14]{CW}, we have that $u_i+w_i=\ell\xF_{s,\xm}^\xO$ a.e. in $\Omega \setminus \{0\}$. This and \eqref{102} imply the desired result.
\end{proof}
\begin{remark}\label{remark}
	If $\ell <0$ then we put $\tilde g(t) = -g(-t)$ and consider the problem
	\ba \label{eq:-l} \left\{ \BAL
	\CL_\xm^s u + \tilde g(u)&=-\ell\delta_0&&\quad\text{in}\;\,\xO,\\
	u&=0&&\quad\text{in}\;\, \BBR^N\setminus\xO.
	\EAL \right.
	\ea
 By Theorem  \ref{existence-gLinf}, there exists a unique weak solution $\tilde u$ of problem \eqref{eq:-l}. Therefore, $-\tilde u$ is the unique weak solution of problem \eqref{eq:g(u)-kdirac}. 
\end{remark}


\begin{proof}[\textbf{Proof of Theorem \ref{semi-dirac>}}]
	In view of Remark \ref{remark}, we may assume that $\ell>0$.
	
	The uniqueness follows from Kato inequality \eqref{Kato||-1}.
	
	Next we show the existence of a solution to problem \eqref{eq:g(u)-kdirac}.
	Let $g_n=\max\{-n,\min\{g,n\}\}$ then $g_n \in C(\R) \cap L^\infty(\R)$. By Theorem \ref{existence-gLinf}, there exists a unique positive weak solution $u_n$ of
	\bal \left\{ \BAL
	\CL_\xm^s u_n+g_n(u)&=\ell \delta_0&&\quad\text{in}\;\,\xO,\\
	u&=0&&\quad\text{in}\;\, \BBR^N\setminus\xO,
	\EAL \right.
	\eal
	namely, for any $b<2s-\tau_+$, $u_n\in L^1(\xO;|x|^{-b})$ and
	 \ba\label{solg(un)dirac}
	\int_\xO u_n(-\xD)^s_{\tau_+}\psi dx+\int_\xO g_n(u_n)\psi|x|^{\tau_+} dx=\ell \int_{\xO}\xF_{s,\xm}^\xO (-\xD)^s_{\tau_+}\psi dx,\quad\forall\psi\in \mathbf{X}_\xm(\xO;|x|^{-b}).
	 \ea
	From the above formula and \eqref{Kato:+-1}, we deduce that $u_{n+1}\leq u_{n}$ for any $n \in \N$. Put $u:=\displaystyle\lim_{n \to\infty}u_n$.
	
	For any $n\in\BBN$, by Theorem \ref{existence-gLinf},  we have
	\ba \label{unPhi-}
	u_{n}(x)\leq \ell\xF_{s,\xm}^\xO(x)\leq \ell |x|^{\tau_-},\quad\forall x\in \xO\setminus\{0\},
	\ea
	which implies $u(x) \leq \ell \Phi_{\mu}^\Omega(x)$ for all $x \in \Omega \setminus \{0\}$. By the monotone convergence theorem, $u_n \to u$ in $L^1(\Omega;|x|^{-b})$.
	
	We notice that $g_n(u_n) \to g(u)$ a.e. in $\Omega \setminus \{0\}$ and in view of \eqref{unPhi-} and assumption $\Lambda_g <\infty$, $0 \leq g_n(u_n) \leq g(\ell |x|^{\tau_-}) \in L^1(\Omega;|x|^{\tau_+})$. Hence by the dominated convergence theorem, we have that $g_n(u_n) \to g(u)$ in $L^1(\xO;|x|^{\tau_+})$.
	
	Let $\psi \in \mathbf{X}_{\mu}(\Omega;|x|^{-b})$ then by \cite[Lemma 4.4]{CGN}, $|\psi| \leq Cd^s$ in $\Omega$. By letting $n \to \infty$ in \eqref{solg(un)dirac}, we conclude \eqref{sol:g(u)-kdirac}.
\end{proof}

\begin{proof}[\textbf{Proof of Theorem \ref{semi-dirac=}}] The proof of this Theorem is similar to that of Theorem \eqref{semi-dirac>} with minor modification, hence we omit it.
\end{proof}

\section{Measures on $\Omega$}

\subsection{Bounded absorption}
Let $\ell \in\BBR$ and $\xO'$ be an open bounded domain such that $\xO\Subset\xO'$ and $\xe\leq \frac{\min_{x\in\partial\xO}|x| }{16}$. Recall that $H(x)= \ell \xF_{s,\xm}^\xO(x)|x|^{-\tau_+}$ for $x \neq 0$. Set $\xO_\xe'=\xO'\setminus B_{\xe}(0)$ and $\xO_\xe=\xO\setminus B_{\xe}(0)$.

For any $u,v\in W^{s,2}(\xO_{\frac{\xe}{2}}')$, we define the operators
\bal
\CB u(v): = \int_{\Omega_{\varepsilon}} fv |x|^{\tau_+}dx
 \quad {\rm and}\quad 
\CT u(v):=\CA u(v) - \CB u(v),
\eal
where $g\in C(\BBR)\cap L^\infty(\xO)$ is a nondecreasing function such that $g(0)=0$, $f \in L^\infty(\Omega_{\varepsilon})$ and $\CA$ is defined in \eqref{AA1A2}. Recall that $M=\sup_{t\in\BBR}|g(t)|$ and $\CK_{\varepsilon}$ is the nonempty convex closed set of $W^{s,2}(\Omega_{\frac{\varepsilon}{2}}')$ defined in \eqref{Ke}.

Using an analogous argument as in the proof of Lemma \ref{dual}, Lemma \ref{monotone}, we can show that $\CT$ is monotone, coercive and weakly continuous on $\CK_{\varepsilon}$. By proceeding as in the proof of Lemma \ref{aproxsol}, we can obtain the following result.
\begin{lemma}\label{aproxsol-sum}
	Assume $\ell \geq 0$ and $g\in C(\BBR)\cap L^\infty(\xO)$ is a nondecreasing function such that $g(0)=0$ and $0 \leq f \in L^\infty(\Omega_{\varepsilon})$. There exists a function $u_\xe = u_{\varepsilon,f,\ell}\in W^{s,2}(\xO_{\frac{\xe}{2}}')$ such that $u_{\varepsilon,f,\ell}=\ell \xF_{s,\xm}^\xO$ in $\BBR^N\setminus \xO_\xe$ and
	\ba \label{aprox-1-sum} \BAL
	\frac{C_{N,s}}{2}\int_{\BBR^N}\int_{\BBR^N}&\frac{(u_{\varepsilon,f,\ell}(x)-u_{\varepsilon,f,\ell}(y))(\xf(x)-\xf(y))}{|x-y|^{N+2s}}dy dx\\
	& +\xm\int_{\xO_\varepsilon}\frac{u_{\varepsilon,f,\ell}(x)\xf(x)}{|x|^{2s}}dx +\int_{\xO_\xe} g(u_{\varepsilon,f,\ell})\xf dx = \int_{\xO_\xe} f\xf dx
	\EAL \ea
	for all $\xf\in C_0^\infty(\xO_{\xe})$.
	Furthermore there holds
	\ba \label{uplow-1-sum} \max\{(\ell\xF_{s,\xm}^\xO-\psi_M)^+,u_{\varepsilon,f,0}\} \leq u_{\varepsilon,f,\ell} \leq \ell \xF_{s,\xm}^\xO + u_{\varepsilon,f,0} \quad \text{in } \xO_{\xe},
	\ea
	where $\psi_M$ is the nonnegative solution of \eqref{psiM} and
$u_{\varepsilon,f,0}$ satisfies \eqref{aprox-1-sum} and $u_{\varepsilon,f,0}=0$ in $\R^N \setminus \Omega_{\varepsilon}$.
\end{lemma}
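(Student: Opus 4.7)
The plan is to follow the scheme of Lemma \ref{aproxsol} with two modifications: the source $f$ enters via a bounded linear functional on $W^{s,2}(\Omega'_{\varepsilon/2})$, and the two-sided bound \eqref{uplow-1-sum} requires three separate comparison arguments (one for the upper bound and two for the lower).

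First, I would observe that $\CB u(v) := \int_{\Omega_\varepsilon} fv|x|^{\tau_+}dx$ is a continuous linear functional on $W^{s,2}(\Omega'_{\varepsilon/2})$, independent of $u$; subtracting it from $\CA$ preserves the monotonicity, coercivity, and weak continuity established in Lemma \ref{monotone}. Hence by \cite[Proposition 17.2]{HKM} there exists $v_\varepsilon \in \CK_\varepsilon$ with $\CT v_\varepsilon(\zeta - v_\varepsilon) \geq 0$ for all $\zeta \in \CK_\varepsilon$. Plugging $\zeta = v_\varepsilon \pm \tilde\phi$ with $\tilde\phi \in C_0^\infty(\Omega_\varepsilon)$ and rewriting in terms of $u_{\varepsilon,f,\ell} := |x|^{\tau_+}v_\varepsilon$ and $\phi = |x|^{\tau_+}\tilde\phi$, exactly as in the derivation of \eqref{aprox-2}, yields the weak formulation \eqref{aprox-1-sum}.

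The two-sided bound will then follow from three comparisons of the same flavor. For the upper bound, set $W := u_{\varepsilon,f,\ell} - \ell\xF_{s,\xm}^\xO - u_{\varepsilon,f,0}$, which vanishes in $\R^N \setminus \Omega_\varepsilon$ (since $u_{\varepsilon,f,\ell} = \ell\xF_{s,\xm}^\xO$ and $u_{\varepsilon,f,0} = 0$ there), so $W^+ \in W_0^{s,2}(\Omega_\varepsilon)$. Subtracting the three weak formulations---those of $u_{\varepsilon,f,\ell}$ and $u_{\varepsilon,f,0}$ (both instances of \eqref{aprox-1-sum}) and of $\ell\xF_{s,\xm}^\xO$ (which satisfies the homogeneous equation in $\Omega\setminus\{0\}$)---then testing against $W^+$, the source terms cancel and the algebraic identity $(W(x)-W(y))(W^+(x)-W^+(y))\geq (W^+(x)-W^+(y))^2$ combined with $\mu\geq\mu_0$ gives
\bal
0 &\leq \tfrac{C_{N,s}}{2}\int_{\R^N}\int_{\R^N}\tfrac{|W^+(x)-W^+(y)|^2}{|x-y|^{N+2s}}\,dy\,dx + \mu\int_{\Omega_\varepsilon}\tfrac{(W^+)^2}{|x|^{2s}}\,dx \\
&\leq -\int_{\Omega_\varepsilon}\bigl(g(u_{\varepsilon,f,\ell})-g(u_{\varepsilon,f,0})\bigr)W^+\, dx \;\leq\; 0,
\eal
the final inequality holding because $u_{\varepsilon,f,\ell} > u_{\varepsilon,f,0}$ on $\{W>0\}$ and $g$ is nondecreasing; hence $W^+\equiv 0$. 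The inequality $u_{\varepsilon,f,\ell} \geq u_{\varepsilon,f,0}$ follows identically by taking $W := u_{\varepsilon,f,0} - u_{\varepsilon,f,\ell}$, which equals $-\ell\xF_{s,\xm}^\xO \leq 0$ outside $\Omega_\varepsilon$. For $u_{\varepsilon,f,\ell} \geq \ell\xF_{s,\xm}^\xO - \psi_M$, set $W := \ell\xF_{s,\xm}^\xO - \psi_M - u_{\varepsilon,f,\ell}$; then $W = -\psi_M \leq 0$ on $B_\varepsilon$ and $W = 0$ on $\R^N\setminus\Omega$, so $W^+ \in W_0^{s,2}(\Omega_\varepsilon)$, and using $\CL_\mu^s\psi_M = M$ together with $|g|\leq M$ and $f\geq 0$, the same test delivers $W^+\equiv 0$.

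The hardest part will be the bookkeeping of the exterior double integrals involving the non-compactly-supported benchmarks $\ell\xF_{s,\xm}^\xO$ and $\psi_M$: one must verify that $W^+$, being supported in $\Omega_\varepsilon$ and thus bounded away from the origin, is an admissible test function (via a density argument inside $W_0^{s,2}(\Omega_\varepsilon)$) in each of the three underlying weak formulations, and that the resulting exterior pairings cancel cleanly so that only the pointwise difference of absorption terms survives on the right-hand side.
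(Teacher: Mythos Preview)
Your proposal is correct and follows essentially the same route as the paper: existence via the monotone operator $\CT=\CA-\CB$ on $\CK_\varepsilon$, then three comparison arguments testing with the positive part of the difference. The only cosmetic distinction is that the paper runs the comparisons in the weighted $v$-variables using the form $\langle\cdot,\cdot\rangle_{s,\tau_+}$ (which is manifestly nonnegative), while you work in the unweighted $u$-variables and invoke the fractional Hardy inequality for $0\leq\|W^+\|_\mu^2$; these are equivalent via $u=|x|^{\tau_+}v$.
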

\begin{proof}[\textbf{Proof}]
	By the standard theory of monotone operators (see, e.g., \cite[Proposition 17.2]{HKM}), there exists $v_{\varepsilon,f,\ell}\in \mathcal{K}_\xe$ such that
	\bal
	\mathcal{A}v_{\varepsilon,f,\ell}(\xz-v_{\varepsilon,f,\ell})\geq0,
	\eal
	for any $\xz\in \mathcal{K}_\xe.$ Set $\xz_\pm=\pm\tilde\xf+v_{\varepsilon,f,\ell}$ for $\tilde\xf\in C_0^\infty(\xO_\xe),$ then  $\xz_\pm\in \mathcal{K}_\xe$ and by the above inequality we can easily show that
	\ba \label{aprox-2-sum} \BAL
	0&=\mathcal{A}v_{\varepsilon,f,\ell}(\tilde\xf) \\
	&=\frac{C_{N,s}}{2}\int_{\xO_{\frac{\xe}{2}}'}\int_{\xO_{\frac{\xe}{2}}'}\frac{(v_{\varepsilon,f,\ell}(x)-v_{\varepsilon,f,\ell}(y))(\tilde\xf(x)-\tilde\xf(y))}{|x-y|^{N+2s}}|y|^{\tau_+} dy|x|^{\tau_+} dx\\
	&\quad +C_{N,s}\int_{\BBR^N\setminus\xO_{\frac{\xe}{2}}'}\int_{\xO_\xe}\frac{(v_{\varepsilon,f,\ell}(y)-H(x))\tilde\xf(y)}{|x-y|^{N+2s}}|y|^{\tau_+} dy |x|^{\tau_+} dx \\
	&\quad +\int_{\xO_\xe} g(|x|^{\tau_+} v_{\varepsilon,f,\ell})\tilde\xf|x|^{\tau_+} dx - \int_{\xO_\xe} f \tilde\xf|x|^{\tau_+} dx\\
	&=\langle v_{\varepsilon,f,\ell}, \tilde \phi \rangle_{s,\tau_+} + \int_{\xO_\xe} g(|x|^{\tau_+} v_{\varepsilon,f,\ell})\tilde\xf|x|^{\tau_+} dx - \int_{\xO_\xe} f \tilde\xf|x|^{\tau_+} dx.
	\EAL \ea
	Setting $u_{\varepsilon,f,\ell}=|x|^{\tau_+} v_{\varepsilon,f,\ell}$ and $\xf=|x|^{\tau_+}\tilde\xf,$ we obtain \eqref{aprox-1-sum}. Moreover, since $v_{\varepsilon,f,\ell} \in \CK_{\varepsilon}$, $v_{\varepsilon,f,\ell} = H$ a.e. in $\R^N \setminus \Omega_{\varepsilon}$, hence $u_{\varepsilon,f,\ell} = \ell\Phi_{s,\mu}^\Omega$ a.e. in $\R^N \setminus \Omega_{\varepsilon}$.
	
	By taking $\tilde \phi = (v_{\varepsilon,f,\ell})^- \in W_0^{s,2}(\Omega_\varepsilon)$ in \eqref{aprox-2-sum}, the standard density argument and the assumption that $g$ is nondecreasing and $g(0)=0$, we have
\bal 
	\BAL
	0&=\langle v_{\varepsilon,f,\ell}, (v_{\varepsilon,f,\ell})^- \rangle_{s,\tau_+} +\int_{\xO_\xe} g(|x|^{\tau_+} v_{\varepsilon,f,\ell})(v_{\varepsilon,f,\ell})^-|x|^{\tau_+} dx - \int_{\xO_\xe} f(v_{\varepsilon,f,\ell})^-|x|^{\tau_+} dx \leq 0,
	\EAL \eal
	which implies $(v_{\varepsilon,f,\ell})^-=0$ a.e. in $\R^N$. Therefore $v_{\varepsilon,f,\ell} \geq 0$, and hence $u_{\varepsilon,f,\ell} \geq 0$ a.e. in $\Omega_{\varepsilon}$.
	
In particular, $v_{\varepsilon,f,0}$ satisfies \eqref{aprox-2-sum} with $\ell=0$, namely
\ba \label{vef0}
	\langle v_{\varepsilon,f,0}, \tilde \phi \rangle_{s,\tau_+} + \int_{\xO_\xe} g(|x|^{\tau_+} v_{\varepsilon,f,0})\tilde\xf|x|^{\tau_+} dx - \int_{\xO_\xe} f \tilde\xf|x|^{\tau_+} dx = 0, \quad \forall \tilde \phi \in C_0^\infty(\Omega_{\varepsilon}).
\ea
	
Moreover, $0 \leq v_{\varepsilon,f,0} \in W^{2,s}(\Omega_{\frac{\varepsilon}{2}}')$ and $v_{\varepsilon,f,0}=0$ a.e. in $\R^N \setminus \Omega_{\varepsilon}$, which in turn implies $v_{\varepsilon,f,0} \in W_0^{2,s}(\Omega_{\varepsilon})$. Therefore, by a density argument, we can take $v_{\varepsilon,f,0}$ as a test function in \eqref{vef0} and use estimate $g(t)t \geq 0$ for any $t \in \R$, and by the embedding inequalities \eqref{subcritsobolev0}--\eqref{subcritsobolev1}, we obtain
\bal 
\| v_{\varepsilon,f,0} \|_{s,\tau_+}^2 \leq C(N,s,\Omega,\mu)\| f \|_{L^\infty(\Omega_{\varepsilon})}.
\eal
Put $u_{\varepsilon,f,0} = |x|^{\tau_+}v_{\varepsilon,f,0}$ then $u_{\varepsilon,f,0} = 0$ a.e. in $\R^N \setminus \Omega_{\varepsilon}$,  and
\ba \label{uef0-2}
\| u_{\varepsilon,f,0} \|_{\mu}^2 \leq  C(N,s,\Omega,\mu)\| f \|_{L^\infty(\Omega_{\varepsilon})}.
\ea

	
	We have
	\ba \label{aprox-3-sum} \BAL
	\langle H, \tilde \phi \rangle_{s,\tau_+}
	= 0, \quad \forall \; \tilde\xf\in C_0^\infty(\xO_{\varepsilon}).
	\EAL \ea
	Therefore,
	\bal
	\langle v_{\varepsilon,f,0} + H, \tilde \phi \rangle_{s,\tau_+} + \int_{\Omega_{\varepsilon}} g(|x|^{\tau_+} (\tilde v_{\varepsilon,f,0}+H)) \tilde\xf|x|^{\tau_+} dx \geq \int_{\Omega_{\varepsilon}} f \tilde \phi |x|^{\tau_+}dx, \quad \forall 0 \leq \tilde \phi \in C_0^\infty(\Omega_{\varepsilon}).
	\eal
	Put $w_{\varepsilon}=v_{\varepsilon,f,\ell}-v_{\varepsilon,f,0}- H$ then from \eqref{aprox-2-sum} and \eqref{aprox-3-sum}, we have
	\ba \label{aprox-4-sum} \BAL
	\langle w_{\varepsilon}, \tilde \phi \rangle_{s,\tau_+}  +\int_{\xO_\xe} (g(|x|^{\tau_+} v_{\varepsilon,f,\ell}) - g(|x|^{\tau_+} (\tilde v_{\varepsilon,f,0}+H)))\tilde\xf|x|^{\tau_+} dx  \leq 0, \; \forall \; 0 \leq \tilde\xf\in C_0^\infty(\xO_{\varepsilon}).
	\EAL \ea
	Note that $w_{\varepsilon}^+\in W^{s,2}(\xO_{\frac{\xe}{2}}')$ and $w_{\varepsilon}^+=0$ in $\BBR^N\setminus \xO_\xe.$ Since $\xO_\xe$ is smooth, we deduce that $w_{\varepsilon}^+\in W^{s,2}_0(\xO_\xe),$ hence, by the density argument, we may use it as test function in \eqref{aprox-4-sum}  together with the monotonicity assumption on $g$ to obtain that
	\ba \label{aprox-5-sum} \BAL
	0& \leq \langle w_{\varepsilon}, w_\varepsilon^+ \rangle_{s,\tau_+}
	+\int_{\xO_\xe} (g(|x|^{\tau_+} v_{\varepsilon,f,\ell}) - g(|x|^{\tau_+}(v_{\varepsilon,f,0} + H)))w_{\varepsilon}^+|x|^{\tau_+} dx \leq 0.
	\EAL \ea
	This implies
	$(w_{\varepsilon})^+=0$ a.e. in $\BBR^N$, hence $v_{\varepsilon,f,\ell} \leq H+ v_{\varepsilon,f,0}$ a.e. in $\Omega_{\varepsilon}$. This implies
	\bal
	u_{\varepsilon,f,\ell} \leq \ell\xF_{s,\xm}^\xO + u_{\varepsilon,f,0} \quad \text{a.e. in } \Omega_{\varepsilon}.
	\eal

	Next we show the lower bound in \eqref{uplow-1-sum}. Since $\displaystyle M=\sup_{t\in\BBR}|g(t)|$ and $\psi_M$ is the nonnegative solution of \eqref{psiM}, we have \bal \CL_\xm^s(\ell\xF_{s,\xm}^\xO-\psi_M)+g(\ell \xF_{s,\xm}^\xO-\psi_M) \leq 0
	\eal
	in the sense of distribution in $\xO_\xe.$ By a similar argument as above, we may show that $u_{\varepsilon,f,\ell} \geq \ell \xF_{s,\xm}^\xO-\psi_M$ a.e. in $\Omega_{\varepsilon}$. Thus $u_{\varepsilon,f,\ell} \geq (\ell \xF_{s,\xm}^\xO-\psi_M)^+$ a.e. in $\Omega_{\varepsilon}$.
	
	From \eqref{aprox-2-sum} and \eqref{vef0}, we see that
	 \ba \label{ve-vef-sum}
	\langle v_{\varepsilon,f,0} -  v_{\varepsilon,f,\ell}, \tilde \phi \rangle_{s,\tau_+} + \int_{\Omega_{\varepsilon}} (g(|x|^{\tau_+} v_{\varepsilon,f,0}) - g(|x|^{\tau_+} v_{\varepsilon,f,\ell})) \tilde \phi |x|^{\tau_+}dx = 0, \quad \forall \tilde \phi \in C_0^\infty(\Omega_{\varepsilon}).
	 \ea
	We note that $(v_{\varepsilon,f,0} - v_{\varepsilon,f,\ell})^+ \in W_0^{s,2}(\Omega_{\varepsilon})$. By density argument, we can take $(v_{\varepsilon,f,0} - v_{\varepsilon,f,\ell})^+$ as a test function in \eqref{ve-vef-sum} to deduce $(v_{\varepsilon,f,0} - v_{\varepsilon,f,\ell})^+ = 0$ in $\Omega_{\varepsilon}$. Therefore  $v_{\varepsilon,f,0} \leq v_{\varepsilon,f,\ell}$ a.e. in $\Omega_{\varepsilon}$.
	The proof is complete.
\end{proof}


\begin{theorem} \label{existence-gLinf-sum}
	Assume $\ell >0$ and $g\in C(\BBR)\cap L^\infty(\xO)$ is a nondecreasing function such that $g(0)=0$ and $\nu \in \GTM^+(\Omega \setminus \{0\}; |x|^{\tau_+})$. Then there exists a unique weak solution $u_{\nu,\ell}\in W^{s,2}_{\loc}(\xO'\setminus\{0\})\cap C(\xO\setminus\{0\})$ of \eqref{eq:g(u)-kdirac-sum}. The solution $u$ satisfies
	\ba\label{est-sum}
	\max\{(\ell \xF_{s,\xm}^\xO-\psi_M)^+, u_{\nu,0} \} \leq u_{\nu,\ell} \leq u_{\nu,0} + \ell \xF_{s,\xm}^\xO\quad \text{a.e. in } \;\xO \setminus \{0\},
	\ea
where $u_{\nu,0}$ is the weak solution to
\bal 
\left\{ \BAL
\CL_\xm^s u + g(u)&= \nu &&\quad\text{in}\;\, \Omega,\\
u&=0&&\quad\text{in}\;\, \BBR^N\setminus\Omega.
\EAL \right.
\eal
\end{theorem}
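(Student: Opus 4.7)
The proof is designed to mirror the blueprint of Theorem \ref{existence-gLinf}, the new ingredient being the simultaneous presence of the diffuse measure $\nu$ on $\Omega\setminus\{0\}$ and the Dirac mass $\ell\delta_0$. Uniqueness is immediate: if $u_1,u_2$ are two weak solutions, their difference formally satisfies $\CL_\mu^s(u_1-u_2)=-(g(u_1)-g(u_2))$ with vanishing measure data, so the Kato-type inequality \eqref{Kato||-1} applied to $u_1-u_2$ with the test function $\xi_b$ from \eqref{xib}, combined with the monotonicity of $g$, forces $u_1=u_2$ a.e.\ in $\Omega$.

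For existence I would set up a double approximation. Choose a sequence $\varepsilon_n\downarrow 0$ and pick smooth nonnegative $f_n$ supported in $\Omega_{\varepsilon_n}$ (e.g.\ by mollifying $\mathbf{1}_{\Omega_{\varepsilon_n}}\nu$) such that $f_n|x|^{\tau_+}dx$ converges weakly to $d\nu$ in $\GTM(\Omega\setminus\{0\};|x|^{\tau_+})$ and $\int_\Omega f_n|x|^{\tau_+}dx$ stays bounded. For each $n$, Lemma \ref{aproxsol-sum} with $\varepsilon=\varepsilon_n$, $f=f_n$ furnishes $u_n:=u_{\varepsilon_n,f_n,\ell}$ that equals $\ell\Phi_{s,\mu}^\Omega$ outside $\Omega_{\varepsilon_n}$, solves the PDE \eqref{aprox-1-sum}, and satisfies the two-sided control \eqref{uplow-1-sum}. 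The upper bound in \eqref{uplow-1-sum}, together with the $L^1(\Omega;|x|^{-b})$ estimate of Theorem \ref{existence3-dirac} applied to the linear Poisson problem with right-hand side $f_n$, yields a uniform bound $\|u_n\|_{L^1(\Omega;|x|^{-b})}\le C$ for every $b<2s-\tau_+$. Local Caccioppoli-type estimates analogous to \eqref{28-1}, derived from \eqref{aprox-1-sum} by truncation away from the origin, provide local $W^{s,2}$ bounds on compacts of $\Omega\setminus\{0\}$. A diagonal extraction gives $u_n\to u$ a.e.\ in $\Omega\setminus\{0\}$ and in $L^1(\Omega;|x|^{-b})$, and since $g$ is bounded the dominated convergence theorem upgrades this to $g(u_n)\to g(u)$ in $L^1(\Omega;|x|^{\tau_+})$.

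To identify $u$ as a weak solution in the sense of Definition \ref{sol:semieq-Dirac-sum} I would use the same auxiliary-function trick as in Theorem \ref{existence-gLinf}. Let $w$ be the unique weak solution of $\CL_\mu^s w=g(u)$ with $w=0$ in $\R^N\setminus\Omega$, provided by Theorem \ref{existence2} (note $g(u)\in L^\infty \subset L^1(\Omega;d^s|x|^{\tau_+})$), and likewise $w_n$ solving $\CL_\mu^s w_n=g(u_n)$. Then on $\Omega_{\varepsilon_n}$ one has $\CL_\mu^s(u_n+w_n)=f_n$ in the distributional sense, while $u_n+w_n$ coincides asymptotically with $\ell\Phi_{s,\mu}^\Omega$ near the origin (from $u_n=\ell\Phi_{s,\mu}^\Omega$ on $B_{\varepsilon_n}(0)$ and the decay of $w_n$ coming from \cite[Lemma 4.4--4.5]{CGN}). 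Passing to the limit in $n$, the weak measure convergence of $f_n|x|^{\tau_+}dx$ to $d\nu$ and the uniform control on $w_n$ show that $V:=u+w$ is a weak solution of the purely linear problem $\CL_\mu^s V=\nu+\ell\delta_0$. Uniqueness in Theorem \ref{existence3-dirac} identifies $V$, and subtracting $w$ yields a $u$ satisfying \eqref{sol:g(u)-kdiracdef}. The two-sided bound \eqref{est-sum} is then obtained by passing to the limit in \eqref{uplow-1-sum}.

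The principal obstacle I anticipate is the identification step, namely verifying that the Dirichlet exterior value $\ell\Phi_{s,\mu}^\Omega$ imposed on $B_{\varepsilon_n}(0)$ correctly transfers, in the limit $\varepsilon_n\to 0$, to the Dirac datum $\ell\delta_0$ in our weighted distributional formulation. This is precisely where the interior asymptotic $u_n/\Phi_{s,\mu}\to\ell$, the characterization in \cite[Theorem 4.14]{CW}, and the boundedness of $g$ (which keeps $w_n$ under control) have to be orchestrated carefully; the diffuse piece $\nu$ adds complication because the analysis near the origin must be decoupled from the behavior on $\supp\nu\subset\Omega\setminus\{0\}$, but the splitting $u=V-w$ above isolates the two contributions cleanly.
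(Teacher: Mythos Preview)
Your overall architecture matches the paper's, but there is a concrete gap in the identification step that stems from your choice of auxiliary function. You let $w$ solve $\CL_\mu^s w=g(u)$, so that $V=u+w$ formally satisfies $\CL_\mu^s V=\nu$ in $\Omega\setminus\{0\}$ together with the asymptotic $V\sim\ell\Phi_{s,\mu}$ at the origin, and you then appeal to uniqueness in Theorem~\ref{existence3-dirac} to identify $V$. But that uniqueness statement applies only once you know $V$ is a weak solution in the $\mathbf{X}_\mu(\Omega;|x|^{-b})$ sense of Definition~\ref{sol:semieq-Dirac-sum}, and that is precisely what is at stake: at stage $n$, $u_n$ satisfies \eqref{aprox-1-sum} only against $\phi\in C_0^\infty(\Omega_{\varepsilon_n})$, while $w_n$ is a weak solution in the $\mathbf{X}_\mu$ sense; these two formulations live on different test spaces and cannot simply be added. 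After the limit you only recover that $\CL_\mu^s V=\nu$ holds against $\phi\in C_0^\infty(\Omega\setminus\{0\})$, which carries no information about the Dirac mass. So the argument is circular.

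The paper closes this gap by a small but essential change: the auxiliary function absorbs \emph{both} the absorption and the source. One first fixes $f\in L^\infty(\Omega)$, lets $\varepsilon\to0$ to get $u_{f,\ell}$ satisfying \eqref{eq-usol-sum}, and then takes $w_{f,\ell}$ solving $\CL_\mu^s w=g(u_{f,\ell})+f$. Now $u_{f,\ell}+w_{f,\ell}$ satisfies the \emph{homogeneous} equation $\CL_\mu^s(\cdot)=0$ in $\Omega\setminus\{0\}$ with asymptotic $\ell\Phi_{s,\mu}$, and it is exactly \cite[Theorem~4.14]{CW} --- a result stated for the homogeneous problem --- that forces $u_{f,\ell}+w_{f,\ell}=\ell\Phi_{s,\mu}^\Omega$. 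Since $w_{f,\ell}$ is already a weak solution in the $\mathbf{X}_\mu$ formulation (Theorem~\ref{existence2}), subtraction yields \eqref{sol:g(u)-kdiracdef} for $u_{f,\ell}$ at once. The passage to general $\nu$ is then done at the level of weak solutions (mollify, then remove the truncation near $0$), which avoids a second difficulty with your coupled limit: for a general measure $\nu$ the linear solution of $\CL_\mu^s z=\nu$ need not satisfy $z(x)=o(\Phi_{s,\mu}(x))$ as $|x|\to0$, so the asymptotic identification via \cite{CW} would fail without first restricting to bounded $f$.
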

\begin{proof}[\textbf{Proof}]

{\sc Uniqueness.} Suppose that $u_1,u_2$ are two weak solutions of \eqref{eq:g(u)-kdirac-sum}. Then by Theorem \ref{existence2},
	\ba \label{u1-u2-sum}
	\int_{\Omega}(u_1-u_2)^+ (-\Delta)_{\tau_+}^s \psi dx + \int_{\Omega} (g(u_1)- g(u_2))\psi |x|^{\tau_+} dx \leq 0, \quad \forall \, 0 \leq \psi \in \mathbf{X}_{\mu}(\Omega;|x|^{-b}).
	\ea
	Taking $\psi=\xi_b$, the solution of \eqref{xib}, in \eqref{u1-u2-sum} and noting that $g$ is nondecreasing, we deduce $(u_1-u_2)^+ = 0$ in $\Omega$. This implies $u_1 \leq u_2$ a.e. in $\Omega$. Similarly, $u_2 \leq u_1$ a.e. in $\Omega$. Thus $u_1 = u_2$ a.e. in $\Omega$.

	\medskip
	
{\sc Existence.} The proof of the existence is divided into three steps. \medskip

\noindent \textbf{Step 1.} First we consider the case $\nu=f \in L^\infty(\Omega)$.

For $\varepsilon>0$, let $u_{\varepsilon,f,\ell}$ be the solution in Lemma \ref{aproxsol-sum} and $\eta \in C^\infty(\R)$ such that $0\leq\eta\leq1$, $\eta(t)=0$ for any $|t|\leq 1$ and $\eta(t)=1$ for any $|t|\geq 2$. For $\xe>0$, set $\eta_{\xe}(x)=\eta(\xe^{-1} |x|)$. Consider $\xe$ small enough such that $B_{16\xe}(0)\subset \xO.$ Using $\eta^2_\xe(x)u_\xe(x)$ as a test function in \eqref{aprox-1-sum}, we obtain
\bal
	\BAL
	&\frac{C_{N,s}}{2}\int_{\BBR^N}\int_{\BBR^N}\frac{(u_{\varepsilon,f,\ell}(x)-u_{\varepsilon,f,\ell}(y))(\eta^2_\xe(x)u_{\varepsilon,f,\ell}(x)-\eta^2_\xe(y)u_{\varepsilon,f,\ell}(y))}{|x-y|^{N+2s}}dy \\
	&= -\int_{\xO_\xe} g(u_{\varepsilon,f,\ell})u_{\varepsilon,f,\ell}\eta^2_\xe dx-\xm\int_{\xO_\xe}\frac{u_{\varepsilon,f,\ell}^2(x)\eta^2_\xe(x)}{|x|^{2s}}dx + \int_{\Omega_{\varepsilon}} f u_{\varepsilon,f,\ell}\eta^2_\xe dx.
	\EAL \eal
	
	By Lemma \ref{aproxsol-sum}, it is easy to see that
\bal
	\BAL
	&\Big|\int_{\xO_\xe} g(u_{\varepsilon,f,\ell})u_{\varepsilon,f,\ell}\eta^2_\xe dx \Big| + \Big|\xm\int_{\xO_\xe}\frac{u_{\varepsilon,f,\ell}^2(x)\eta^2_\xe(x)}{|x|^{2s}}dx  \Big| + \Big| \int_{\Omega_{\varepsilon}} f u_{\varepsilon,f,\ell}\eta^2_\xe dx \Big| \\[1mm]
	& \leq C(N,s,\xm,\xO,\xe,\ell,M, \| f\|_{L^\infty(\Omega)}).
	\EAL \eal
By (\ref{28-1}), we have that
 \bal
	&\int_{\BBR^N}\int_{\BBR^N}\frac{|\eta_\xe(x)u_{\varepsilon,f,\ell}(x)-\eta_\xe(y)u_{\varepsilon,f,\ell}(y)|^2}{|x-y|^{N+2s}}dydx  \leq C(N,s,\mu,\Omega,\Omega',\varepsilon,\ell,M, \| f \|_{L^\infty(\Omega)}).
 \eal
	
	By the standard fractional compact Sobolev embedding and a diagonal argument, there exists a subsequence $\{u_{\varepsilon_{n},f,\ell}\}_{n \in \N}$ such that
	$u_{\varepsilon_{n},f,\ell}\to u_{f,\ell}$ a.e. in $\Omega \setminus \{0\}$ as $\varepsilon_n \to 0$. We find that $u_{f,\ell}\in W^{s,2}_{\loc}(\BBR^N\setminus \{0\})\cap C(\xO\setminus\{0\})$ and $u=0$ in $\BBR^N\setminus\xO$.
	Moreover, for any $\xf\in C_0^\infty(\xO \setminus \{0\})$, there exists $\bar \varepsilon >0$ such that $\phi \in C_0^\infty(\Omega_{\bar \varepsilon})$. Thus, for $2\varepsilon_n \leq \bar \varepsilon$, $\phi \in C_0^\infty(\Omega_{\varepsilon_n})$. Therefore, by dominated convergence theorem, we obtain
	\bal
	&\lim_{n\to\infty}\int_{\BBR^N}\int_{\BBR^N}\frac{(u_{\varepsilon_{n},f,\ell}(x)-u_{\varepsilon_{n},f,\ell}(y))(\xf(x)-\xf(y))}{|x-y|^{N+2s}}dy dx \\
	&\qquad =
	\int_{\BBR^N}\int_{\BBR^N}\frac{(u_{f,\ell}(x)-u_{f,\ell}(y))(\xf(x)-\xf(y))}{|x-y|^{N+2s}}dy dx, \\
	&\lim_{n\to\infty}\int_{\xO_{\varepsilon_n}} \frac{u_{\varepsilon_{n},f,\ell}(x)\xf(x)}{|x|^{2s}}dx=\int_\xO\frac{u_{f,\ell}(x)\xf(x)}{|x|^{2s}}dx, \\
	&\lim_{n\to\infty}\int_{\xO_{\varepsilon_n}} g(u_{\varepsilon_{n},f,\ell})\xf dx=\int_{\xO} g(u_{f,\ell})\xf dx, \\
	&\lim_{n \to \infty} \int_{\Omega_{\varepsilon_n}} f  \phi dx = \int_{\Omega} f \phi dx.
	\eal
	Thus by passing $n \to \infty$ in \eqref{aprox-1-sum} with $\varepsilon$ replaced by $\varepsilon_n$, we obtain
	 \ba\label{eq-usol-sum}
	\BAL
	&\frac{C_{N,s}}{2}\int_{\BBR^N}\int_{\BBR^N}\frac{(u_{f,\ell}(x)-u_{f,\ell}(y))(\xf(x)-\xf(y))}{|x-y|^{N+2s}}dy dx \\
	&\qquad \qquad +\xm\int_\xO\frac{u_{f,\ell}(x)\xf(x)}{|x|^{2s}}dx +\int_{\xO} g(u_{f,\ell})\xf dx= \int_{\Omega} f \phi dx, \quad \forall \xf\in C_0^\infty(\xO\setminus\{0\}).
	\EAL
 	\ea

Employing estimate \eqref{uplow-1-sum} with $\varepsilon$ replaced by $\varepsilon_n$ and letting $\varepsilon_n \to 0$, we find that
\ba\label{est-ufk}
\max\{(\ell \xF_{s,\xm}^\xO-\psi_M)^+, u_{f,0} \} \leq u_{f,\ell} \leq u_{f,0} + \ell \xF_{s,\xm}^\xO\quad \text{a.e. in } \;\xO \setminus \{0\}.
\ea

Next, by \eqref{uef0-2},  $u_{f,0} \in \Hm$ and by \cite[Lemma 4.4,4.5]{CGN}, we deduce that $u_{f,0} \in C^\xb(\xO\setminus\{0\})$ for any $\xb\in (0,2s)$ and
\bal 
	u_{f,0}(x) \leq C(N,\Omega,s,\mu) d(x)^s|x|^{\tau_+}\| f\|_{L^\infty(\Omega)} \quad \forall x \in \Omega \setminus \{0\},
\eal
	which implies
	\ba \label{uf0/Phi} \lim_{|x| \to 0}\frac{u_{f,0}(x)}{\Phi_{  \mu}^\Omega(x)}=0.
	\ea
Combining \eqref{est-ufk}, \eqref{psiM/Phi} and \eqref{uf0/Phi} lead to
\bal 
\lim_{|x| \to 0}\frac{u_{f,\ell}(x)}{\Phi_{\mu}(x)}=\ell.
\eal

Since $g \in L^\infty(\R)$, by Theorem \ref{existence2}, there exists a unique weak solution $w_{f,\ell}$ of
	\bal \left\{ \BAL
	\CL_\xm^s w &=g(u_{f,\ell}) + f &&\quad\text{in}\;\xO,\\
	w&=0&&\quad\text{in}\;\BBR^N\setminus\xO,
	\EAL \right.
	\eal
namely for any $b<2s-\tau_+$, $w_{f,\ell} \in L^1(\Omega;|x|^{-b})$ and
	\ba \label{sol-w}
	\int_\xO w_{f,\ell}(-\xD)^s_{\tau_+}\psi dx=\int_\xO g(u_{f,\ell}) \psi |x|^{\tau_+} dx + \int_\xO f \psi |x|^{\tau_+} dx,\quad\forall\psi\in \mathbf{X}_\xm(\xO;|x|^{-b}).
	\ea
	This and \eqref{eq-usol-sum} imply
	\bal
	\frac{C_{N,s}}{2}&\int_{\BBR^N}\int_{\BBR^N}\frac{(u_{f,\ell}(x)+w_{f,\ell}(x)-u_{f,\ell}(y)-w_{f,\ell}(y))(\xf(x)-\xf(y))}{|x-y|^{N+2s}}dy dx\\
	&+\xm\int_\xO\frac{(u_{f,\ell}(x)+w_{f,\ell}(x))\xf(x)}{|x|^{2s}}dx=0,\quad\forall \xf\in C_0^\infty(\xO\setminus\{0\}).
	\eal
Since $g \in L^\infty(\R)$ and $f \in L^\infty(\Omega)$, in view of the proof of Theorem \ref{existence2}, $w_{f,\ell} \in \Hm$. It follows by \cite[Lemma 4.4,4.5]{CGN} that  $w_{f,\ell} \in C^\xb(\xO\setminus\{0\})$ for any $\xb\in (0,2s)$ and
	\ba \label{wi-est-1}
	w_{f,\ell}(x)\leq C(M,\xO,s,N,\xm) d(x)^s|x|^{\tau_+}, \quad \forall x \in \Omega \setminus \{0\}.
	\ea

	Combining \eqref{assym-1}, \eqref{wi-est-1}, the definition of $\Phi_{s,\mu}$  in \eqref{fu} and the fact that $\tau_+ \geq \tau_-$, we derive
	\bal
	\lim_{|x|\to 0}\frac{u_{f,\ell}(x)+w_{f,\ell}(x)}{{\xF_{s,\xm}(x)}}=\ell.
	\eal
	Hence by \cite[Theorem 4.14]{CW}, we have that $u_{f,\ell}+w_{f,\ell}=\ell \xF_{s,\xm}^\xO$ a.e. in $\Omega \setminus \{0\}$. Plugging it into \eqref{sol-w} leads to \eqref{sol:g(u)-kdiracdef}, namely $u_{f,\ell}$ is a weak solution of \eqref{eq:g(u)-kdirac-sum}. \medskip
	
\noindent \textbf{Step 2.} We assume that $\nu \in \GTM^+(\Omega \setminus \{0\}; |x|^{\tau_+})$ has compact support in $\Omega \setminus \{0\}$. Let $\{ \zeta_\delta\}$ be the sequence of standard mollifiers. Put $\nu_\delta = \zeta_\delta \ast \nu$ then $0 \leq \nu_\delta \in C_0^\infty(D)$ where $D \Subset \Omega \setminus \{0\}$, then \eqref{nuntonu} and \eqref{nun<nu} hold.

Let $u_{\nu_\delta,\ell}$ be the unique weak solution of \eqref{eq:g(u)-kdirac-sum} with $\nu$ replaced by $\nu_\delta$, namely for any $b<2s-\tau_+$, 
$u_{\nu_\delta,\ell} \in L^1(\xO;|x|^{-b}),$ $g(u_{\nu_\delta,\ell})\in L^1(\xO;|x|^{\tau_+})$ and
 \ba \label{sol:g(u)-kdirac-sum}
\int_\xO u_{\nu_\delta,\ell}(-\xD)^s_{\tau_+}\psi dx+\int_\xO g(u_{\nu_\delta,\ell})\psi|x|^{\tau_+} dx= \int_{\Omega \setminus \{0\} } \psi |x|^{\tau_+}\nu_\delta dx  +  \ell \int_{\xO}\xF_{s,\xm}^\xO (-\xD)^s_{\tau_+}\psi dx,
 \ea
for any $\psi\in \mathbf{X}_\xm(\xO;|x|^{-b})$.

Since $g \in L^\infty(\R)$, by Theorem \ref{existence2}, there exists a unique weak solution $w_{\nu_\delta,\ell}$ of
\bal \left\{ \BAL
\CL_\xm^s w &=g(u_{\nu_\delta,\ell}) + \nu_\delta  &&\quad\text{in}\;\xO,\\
w&=0&&\quad\text{in}\;\BBR^N\setminus\xO.
\EAL \right.
\eal
As in step 1, $u_{\nu_\delta,\ell} + w_{\nu_\delta,\ell} = \ell  \xF_{s,\xm}^\xO$ in $\Omega \setminus \{0\}$ for any $n \in \N$.

By proceeding analogously as in the proof of Theorem \ref{existence-semi-1} and employing \eqref{nun<nu} and the fact that $g \in L^\infty(\R)$, we may show that there exists $w_{\nu,\ell}$ such that, up to a subsequence, $w_{\nu_\delta,\ell} \to w_{\nu,\ell}$ a.e. in $\Omega \setminus \{0\}$ and in $L^1(\Omega;|x|^{-b})$ as $\delta \to 0$ for any $b<2s-\tau_+$. Put $u_{\nu,\ell} = \ell\xF_{s,\xm}^\xO - w_{\nu,\ell}$ then $u_{\nu_\delta,\ell} \to u_{\nu,\ell}$ a.e. in $\Omega \setminus \{0\}$ and in $L^1(\Omega;|x|^{-b})$ as $\delta \to 0$ for any $b<2s-\tau_+$. Since $g \in L^\infty(\Omega) \cap C(\R)$, by the dominated convergence theorem, $g(u_{\nu_\delta,\ell}) \to g(u_{\nu,\ell})$ a.e. in $\Omega \setminus \{0\}$ and in $L^1(\Omega;|x|^{\tau_+})$ as $\delta \to 0$.

Therefore, letting $\delta \to 0$ in \eqref{sol:g(u)-kdirac-sum} leads to
\bal 
\int_\xO u_{\nu,\ell}(-\xD)^s_{\tau_+}\psi dx+\int_\xO g(u_{\nu,\ell})\psi|x|^{\tau_+} dx= \int_{\Omega \setminus \{0\} } \psi |x|^{\tau_+}\nu dx  +  \ell \int_{\xO}\xF_{s,\xm}^\xO (-\xD)^s_{\tau_+}\psi dx
\eal
for any $\psi\in \mathbf{X}_\xm(\xO;|x|^{-b})$. It means $u_{\nu,\ell}$ is a weak solution of \eqref{eq:g(u)-kdirac-sum}. \medskip

\noindent \textbf{Step 3.} We consider $\nu \in \GTM^+(\Omega \setminus \{0\}; |x|^{\tau_+})$. Put $\nu_r = \1_{\Omega \setminus B_r(0)}\nu$ and $\nu_{r,\delta}=\zeta_\delta \ast (\1_{\Omega \setminus B_r(0)}\nu)$. Denote by $u_{\nu_{r},\ell}$ and $u_{\nu_{r,\delta},\ell}$ the nonnegative weak solutions of \eqref{eq:g(u)-kdirac-sum} with $\nu$ replaced by $\nu_r$ and by $\nu_{r,\delta}$ respectively. By step 2, $u_{\nu_{r,\delta},\ell} \to u_{\nu_{r},\ell}$ a.e. in $\Omega \setminus \{0\}$ and in $L^1(\Omega;|x|^{-b})$ as $\delta \to 0$ for any $b<2s-\tau_+$.

Since $\nu_{r} \geq \nu_{r'} \geq 0$ for $0<r \leq r'$, it follows that $\nu_{r,\delta} \geq \nu_{r',\delta}$. By \eqref{Kato:+-1} and the monotonicity of $g$, we deduce that $u_{\nu_{r,\delta},\ell} \geq u_{\nu_{r',\delta},\ell} \geq 0$ for any $0<r<r'$ and $\delta > 0$. Letting $\delta \to 0$ yields $u_{\nu_{r},\ell} \geq u_{\nu_{r'},\ell} \geq 0$ for any $0<r<r'$. Employing the monotonicity convergence theorem, we derive that $u_{\nu_{r},\ell} \uparrow u_{\nu,\ell}$ a.e. in $\Omega$ and in $L^1(\Omega;|x|^{-b})$ as $r \to 0$ for any $b<2s-\tau_+$. Consequently, $g(u_{\nu_{r},\ell}) \uparrow g(u_{\nu,\ell})$ a.e. in $\Omega$ and in $L^1(\Omega;|x|^{\tau_+})$ as $r \to 0$. Passing to the limit, we conclude that $u_{\nu,\ell}$ is a weak solution to \eqref{eq:g(u)-kdirac-sum}.
\end{proof}


\subsection{Unbounded absorption}

\begin{proof}[\textbf{Proof of Theorem \ref{dirac>-sum}}]
	The uniqueness follows from Kato inequality \eqref{Kato||-1}.
	
	Next we show the existence of a solution to problem \eqref{eq:g(u)-kdirac-sum}.
	Let $g_n=\max\{-n,\min\{g,n\}\}$ then $g_n \in C(\R) \cap L^\infty(\R)$. For $r>0$ and $0<\delta<\frac{r}{4}$, put $\nu_r = \1_{\Omega \setminus B_r(0)}\nu$ and $\nu_{r,\delta} = \zeta_{\delta} \ast (\1_{\Omega \setminus B_r(0)}\nu)$. By Theorem \ref{existence-gLinf-sum}, there exists a unique positive weak solution $u_{\nu_{r,\delta},\ell,n}$ of
	\bal \left\{ \BAL
	\CL_\xm^s u +g_n(u)&= \nu_{r,\delta} + \ell \delta_0 &&\quad\text{in}\;\xO,\\
	u&=0&&\quad\text{in}\;\BBR^N\setminus\xO,
	\EAL \right.
	\eal
namely, for any $b<2s-\tau_+$,  $u_{\nu_{r,\delta},\ell,n} \in L^1(\Omega;|x|^{-b})$ and
	\ba\label{solg(un)-rldirac}
	\int_\xO u_{\nu_{r,\delta},\ell,n}(-\xD)^s_{\tau_+}\psi dx+\int_\xO g_n(u_{\nu_{r,\delta},\ell,n})\psi|x|^{\tau_+} dx= \int_{\Omega \setminus \{0\}}\psi |x|^{\tau_+}\nu_{r,\delta}dx +  \ell\int_{\xO}\xF_{s,\xm}^\xO (-\xD)^s_{\tau_+}\psi dx
	\ea
	for all $\psi\in \mathbf{X}_\xm(\xO;|x|^{-b})$.
	From the above formula and \eqref{Kato:+-1}, we deduce that $u_{\nu_{r,\delta},\ell,n+1}\leq u_{\nu_{r,\delta},\ell,n}$ for any $n \in \N$. Put $u_{\nu_{r,\delta},\ell}:=\lim_{n \to\infty}u_{\nu_{r,\delta},\ell,n}$.
	
	For any $n\in\BBN$, by Theorem \ref{existence-gLinf-sum},  we have
\ba \label{ulnPhi}
\max\{ (\ell \xF_{s,\xm}^\xO - \psi_M)^+, u_{\nu_{r,\delta},0,n} \} \leq u_{\nu_{r,\delta},\ell,n} \leq u_{\nu_{r,\delta},0,n} + \ell \xF_{s,\xm}^\xO \quad \text{a.e. in } \xO\setminus\{0\},
\ea
which implies \eqref{est-sum}.

By Theorem \ref{existence4}, $u_{\nu_{r,\delta},0,n} \leq v_{\nu_{r,\delta}}$ a.e. in $\Omega \setminus \{0\}$, where $v_{\nu_{r,\delta}}$ is the unique solution of
\bal 
\left\{ \BAL
\CL_\xm^s v&=\xn_{r,\delta}&&\quad\text{in}\;\, \xO,\\
v&=0&&\quad\text{in}\;\,\BBR^N\setminus\xO.
\EAL \right.
\eal
The existence and uniqueness of $v_{\nu_r,\varepsilon}$ is guaranteed by Theorem \ref{existence2}. Moreover $v_{\nu_r,\delta} \geq 0$ and for any $b<2s-\tau_+$,
\bal
\| v_{\nu_r,\delta} \|_{L^1(\Omega;|x|^{-b})} \leq C(N,\Omega,s,\mu,b)\| \nu_{r,\delta} \|_{L^1(\Omega;|x|^{\tau_+})} \leq C(N,\Omega,s,\mu,b,r)\| \nu \|_{\GTM(\Omega \setminus \{0\};|x|^{\tau_+})}.
\eal
We also note that $\xF_{s,\xm}^\xO \in L^1(\Omega;|x|^{-b})$. Therefore, by the monotone convergence theorem, $u_{\nu_{r,\delta},\ell,n} \to u_{\nu_{r,\delta},\ell}$ in $L^1(\Omega;|x|^{-b})$ as $n \to \infty$.

By Lemma \ref{Marcin-3},
\bal
\| v_{\nu_{r,\delta}} \|_{L_w^{\frac{N}{N-2s}}(\Omega \setminus \{0\};|x|^{\tau_+})} \leq C(N,\Omega,s,\mu,r) \| \nu_{r,\delta} \|_{L^1(\Omega \setminus \{0\};|x|^{\tau_+})},
\eal
which implies
\ba \label{Marcin-4}
\| u_{\nu_{r,\delta},0,n} \|_{L_w^{\frac{N}{N-2s}}(\Omega \setminus \{0\};|x|^{\tau_+})} \leq C(N,\Omega,s,\mu,r) \| \nu_{r,\delta} \|_{L^1(\Omega \setminus \{0\};|x|^{\tau_+})}, \quad \forall n \in \N.
\ea
We can also check that
\ba \label{Marcin-5}
\| \xF_{s,\xm}^\xO \|_{L_w^{\frac{N+\tau_+}{-\tau_-}}(\Omega \setminus \{0\};|x|^{\tau_+})} \leq C(N,s,\mu).
\ea
Combining \eqref{ulnPhi}, \eqref{Marcin-4} and \eqref{Marcin-5} yields
\bal
\| u_{\nu_{r,\delta},\ell,n} \|_{L_w^{p_{s,\mu}^*}(\Omega \setminus \{0\};|x|^{\tau_+})} \leq C(N,\Omega,s,\mu,r) \| \nu_{r,\delta} \|_{L^1(\Omega \setminus \{0\};|x|^{\tau_+})} + C(N,s,\mu,\ell), \quad \forall n \in \N.
\eal
Then Lemma \ref{lem:equi} ensures that the sequence $\{ g_n(u_{\nu_{r,\delta},\ell,n}) \}$ is uniformly bounded and equi-integrable in $L^1(\Omega;|x|^{\tau_+})$. On the other hand, we derive that $g_n(u_{\nu_{r,\delta},\ell,n}) \to g(u_{\nu_{r,\delta},\ell})$ a.e. in $\Omega \setminus \{0\}$ as $n \to \infty$. By the Vitali convergence theorem, we conclude that $g_n(u_{\nu_{r,\delta},\ell,n}) \to g(u_{\nu_{r,\delta},\ell})$ in $L^1(\Omega;|x|^{\tau_+})$ as $n \to \infty$.

Therefore, by sending $n \to \infty$ in \eqref{solg(un)-rldirac}, we derive
\ba\label{solg(un)-ldirac}
\int_\xO u_{\nu_{r,\delta},\ell}(-\xD)^s_{\tau_+}\psi dx+\int_\xO g(u_{\nu_{r,\delta},\ell})\psi|x|^{\tau_+} dx= \int_{\Omega \setminus \{0\}}\psi |x|^{\tau_+}\nu_{r,\delta}dx +  \ell\int_{\xO}\xF_{s,\xm}^\xO (-\xD)^s_{\tau_+}\psi dx
\ea
for all $\psi\in \mathbf{X}_\xm(\xO;|x|^{-b})$.

By a similar argument as above, we can show that $u_{\nu_{r,\delta},\ell} \to u_{\nu_{r},\ell}$ a.e. in $\Omega \setminus \{0\}$ and in $L^1(\Omega;|x|^{-b})$ for any $b<2s-\tau_+$ and $g(u_{\nu_{r,\delta},\ell}) \to g(u_{\nu_{r},\ell})$ a.e. in $\Omega \setminus \{0\}$ and in $L^1(\Omega;|x|^{\tau_+})$ as $\delta \to 0$. By letting $\delta \to 0$ in \eqref{solg(un)-ldirac}, we obtain
\ba\label{solg(un)-rdirac}
\int_\xO u_{\nu_{r},\ell}(-\xD)^s_{\tau_+}\psi dx+\int_\xO g(u_{\nu_{r},\ell})\psi|x|^{\tau_+} dx= \int_{\Omega \setminus \{0\}}\psi |x|^{\tau_+}d\nu_{r} +  \ell\int_{\xO}\xF_{s,\xm}^\xO (-\xD)^s_{\tau_+}\psi dx
\ea
for all $\psi\in \mathbf{X}_\xm(\xO;|x|^{-b})$.

Next we see that $\nu_{r',\delta} \geq \nu_{r,\delta}$ for any $0<r<r'$. By the Kato type inequality \eqref{Kato:+-1} and the monotonicity of $g$, we deduce that $u_{\nu_{r',\delta},\ell} \leq u_{\nu_{r,\delta},\ell}$. It follows that $u_{\nu_{r'},\ell} \leq u_{\nu_{r},\ell}$ for any $0<r<r'$. Put $u_{\nu,\ell}=\lim_{r \to 0}u_{\nu_{r},\ell}$.

Next by taking $\psi=\xi_b$, the solution of problem \eqref{xib}, we deduce that for any $b<2s-\tau_+$,
\bal
\| u_{\nu_{r},\ell} \|_{L^1(\Omega;|x|^{-b})} + \| g(u_{\nu_{r},\ell}) \|_{L^1(\Omega;|x|^{\tau_+})} \leq \| \nu \|_{\GTM(\Omega \setminus \{0\};|x|^{\tau_+})}  + C(N,s,\mu,\ell).
\eal
Therefore, from the monotone convergence theorem and the monotonicity of $g$, we deduce that $u_{\nu_{r},\ell} \to u_{\nu,\ell}$ in $L^1(\Omega;|x|^{-b})$ and $g(u_{\nu_{r},\ell}) \to g(u_{\nu,\ell})$ in $L^1(\Omega;|x|^{\tau_+})$ as $r \to 0$. Thus, letting $r \to 0$ in \eqref{solg(un)-rdirac}, we obtain
\bal
\int_\xO u_{\nu,\ell}(-\xD)^s_{\tau_+}\psi dx+\int_\xO g(u_{\nu,\ell})\psi|x|^{\tau_+} dx= \int_{\Omega \setminus \{0\}}\psi |x|^{\tau_+}d\nu +  \ell\int_{\xO}\xF_{s,\xm}^\xO (-\xD)^s_{\tau_+}\psi dx
\eal
for all $\psi\in \mathbf{X}_\xm(\xO;|x|^{-b})$. It means $u_{\nu,\ell}$ is a weak solution to problem \eqref{eq:g(u)-kdirac-sum}.
\end{proof}

\begin{proof}[\textbf{Proof of Theorem \ref{dirac=-sum}}] The proof of this theorem can be proceeded similarly to that of Theorem \ref{dirac>-sum}.
\end{proof}



\end{document}